\newtheorem{theorem}{Theorem}[section]
\newtheorem{corollary}[theorem]{Corollary}
\newtheorem{lemma}[theorem]{Lemma}
\newtheorem{proposition}[theorem]{Proposition}
\newtheorem{assumption}[theorem]{Assumption}
\newtheorem{example}[theorem]{Example}
\theoremstyle{definition}
\newtheorem{definition}[theorem]{Definition}
\newtheorem{remark}[theorem]{\textbf{Remark}}
\numberwithin{equation}{section}
\DeclareMathOperator{\newtriangledown}{\triangledown}
\DeclareMathOperator*{\newbigtriangledown}{\!\scalerel*{\triangledown}{\bigvee}\!}
\begin{document}

\title{Inverting the Markovian projection for pure jump processes}
\author{
Martin Larsson\footnote{Department of Mathematical Sciences, Carnegie Mellon University, \texttt{larsson@cmu.edu}} \and
Shukun Long\footnote{Department of Mathematical Sciences, Carnegie Mellon University, \texttt{shukunl@alumni.cmu.edu}}
}
\maketitle

\begin{abstract}
Markovian projections arise in problems where we aim to mimic the one-dimensional marginal laws of an It\^o semimartingale by using another It\^o process with Markovian dynamics. In applications, Markovian projections are useful in calibrating jump-diffusion models with both local and stochastic features, leading to the study of the inversion problems. In this paper, we invert the Markovian projections for pure jump processes, which can be used to construct calibrated local stochastic intensity (LSI) models for credit risk applications. Such models are jump process analogues of the notoriously hard to construct local stochastic volatility (LSV) models used in equity modeling.
\end{abstract}

\section{Introduction}\label{sec:1}
Given a general It\^o semimartingale $X$, its Markovian projection is an It\^o process $\widehat{X}$ with Markovian type differential characteristics, i.e.\ they are functions of time and the process itself, such that the law of $\widehat{X}_t$ agrees with the law of $X_t$ for every $t \geq 0$. The concept of Markovian projections was first introduced by Gy\"ongy \cite{MR0833267}, where existence results were proved for continuous It\^o semimartingales using techniques inspired by Krylov \cite{MR0808203}. Later, Brunick and Shreve \cite{MR3098443} extended the work of \cite{MR0833267} by relaxing the assumptions therein and proving stronger mimicking results involving functionals of sample paths. On the other hand, Bentata and Cont \cite{bentata2012mimicking} studied Markovian projections for jump diffusion processes. They imposed relatively strong assumptions, but meanwhile they also showed uniqueness and Markov property of the mimicking processes. In our previous work \cite{MR4814246}, we independently constructed Markovian projections for It\^o semimartingales with jumps. The assumptions in \cite{MR4814246} are generally weaker compared to \cite{bentata2012mimicking}, at the cost of not guaranteeing properties beyond existence. 

More specifically, let $X$ be an $\mathbb{R}^d$-valued c\`adl\`ag It\^o semimartingale with differential characteristics $(\beta, \alpha, \kappa)$ (see Definition~\ref{def:diff_char}). Then, under suitable integrability and growth conditions, there exist measurable functions $b: \mathbb{R}_+ \times \mathbb{R}^d \to \mathbb{R}^d$, $a: \mathbb{R}_+ \times \mathbb{R}^d \to \mathbb{S}_+^d$, and a L\'evy transition kernel $k$ from $\mathbb{R}_+ \times \mathbb{R}^d$ to $\mathbb{R}^d$ such that for Lebesgue-a.e.\ $t \geq 0$,
\begin{subequations}
	\begin{align}
		b(t, X_{t-}) &= \mathbb{E}[\beta_t \,|\, X_{t-}],\label{eq:mp_condexp1}\\
		a(t, X_{t-}) &= \mathbb{E}[\alpha_t \,|\, X_{t-}],\label{eq:mp_condexp2}\\
		k(t, X_{t-}, d\xi) &= \mathbb{E}[\kappa_t(d\xi) \,|\, X_{t-}].\label{eq:mp_condexp3}
	\end{align}
\end{subequations}
Moreover, there exists an It\^o semimartingale $\widehat{X}$ with Markovian type differential characteristics $b(t, \widehat{X}_{t-})$, $a(t, \widehat{X}_{t-})$ and $k(t, \widehat{X}_{t-}, d\xi)$ such that for every $t \geq 0$, the law of $\widehat{X}_t$ agrees with the law of $X_t$. The process $\widehat{X}$ is the so-called Markovian projection of $X$. For a more rigorous statement of this result, the readers may refer to \cite{MR4814246}, Theorem~3.2.

In applications, Markovian projections usually occur in the inversion problems. Let us say we begin with a relatively simple process $\widehat{X}$. The objective is to construct a process $X$ with richer dynamics, while preserving the one-dimensional marginal laws. If we can find a more complex process $X$ whose Markovian projection is $\widehat{X}$, then we are done as the marginal law constraints are automatically satisfied. This is often referred to as ``inverting the Markovian projection''. One of the most well-known applications of Markovian projections is the calibration of the local stochastic volatility (LSV) model in mathematical finance (see e.g.\ \cite{andersen2010interest}, Appendix~A, \cite{MR3155635}, Chapter~11, and the references therein). In an LSV model, the risk-neutral dynamics of the stock price is modeled via the following SDE (assuming zero interest rate and dividend yield):
\begin{equation}\label{eq:LSV0}
	dS_t = \eta_t \sigma(t, S_t) S_t \,dB_t,
\end{equation}
where $\eta$ is the stochastic volatility, $\sigma$ is a function to be calibrated, and $B$ is a Brownian motion. One requires exact calibration of the LSV model to European call option prices (which depend on the one-dimensional marginal laws of the stock price). By the seminal work of Dupire \cite{dupire1994pricing}, we have such exact calibration in the local volatility (LV) model:
\begin{equation*}
	d\widehat{S}_t = \sigma_{\mathrm{Dup}}(t, \widehat{S}_t) \widehat{S}_t \,d\widehat{B}_t,\quad
	\sigma_{\mathrm{Dup}}^2(t, K) \coloneqq \frac{\partial_t C(t, K)}{(1/2) K^2 \partial_{KK} C(t, K)},
\end{equation*}
where $\widehat{B}$ is a Brownian motion, and $C(t, K)$ is the European call option price with maturity $t$ and strike $K$. Thus, we only need $\widehat{S}$ to be a Markovian projection of $S$. Based on \eqref{eq:mp_condexp2} (here $\alpha_t = \eta_t^2 \sigma^2(t, S_t) S_t^2$), one can choose
\begin{equation}\label{eq:leverage}
	\sigma^2(t, x) \coloneqq \frac{\sigma_{\mathrm{Dup}}^2(t, x)}{\mathbb{E}[\eta_t^2 \,|\, S_t = x]}.
\end{equation}
Plugging \eqref{eq:leverage} into \eqref{eq:LSV0} yields the McKean--Vlasov type SDE
\begin{equation}\label{eq:LSV}
	dS_t = \frac{\eta_t}{\sqrt{\mathbb{E}[\eta_t^2 \,|\, S_t]}} \sigma_{\mathrm{Dup}}(t, S_t) S_t \,dB_t.
\end{equation}

However, the SDE \eqref{eq:LSV} is notoriously difficult to solve, and finding a solution in full generality still remains an open problem. Partial results exist when $\eta$ takes the form $f(Y)$. For instance, Abergel and Tachet \cite{MR2629564} established the short-time existence of solutions when $Y$ is a multi-dimensional diffusion process. Jourdain and Zhou \cite{MR4086600} showed the weak existence of solutions with $Y$ being a finite-state jump process, provided that $f$ satisfies a structural condition. Lacker, Shkolnikov and Zhang \cite{MR4152640} proved the strong existence and uniqueness of stationary solutions, when $\sigma_{\mathrm{Dup}}$ has no dependency on $t$ and $Y$ solves a time homogeneous SDE driven by an independent Brownian motion. Djete \cite{djete2022non} recently provided a more general existence result when $\eta$ takes the form $f(t, S, Y)$ for a specific class of functions $f$, and $Y$ solves some SDE. In his setting, the SDE of $Y$ is allowed to have time dependent coefficients and can be driven by a correlated Brownian motion.

In this paper, we study a jump process analogue of the above problem. We invert the Markovian projections for pure jump processes. Our work is motivated by the calibration of the local stochastic intensity (LSI) models (see \cite{MR3481308}), which often appear in credit risk applications. In an LSI model, the number of defaults is modeled via a counting process $X$ whose intensity takes the form $\eta_t \lambda(t, X_{t-})$, with $\eta$ representing the stochastic intensity and $\lambda$ being a function to be calibrated. That is to say, the process
\begin{equation*}
	X_t - \int_0^t \eta_s \lambda(s, X_{s-}) \,ds,\quad t \geq 0,
\end{equation*}
is a martingale. Similar to the LSV model, here we want to align the one-dimensional marginal laws of the LSI model with the local intensity (LI) model, in which there is exact calibration to collateralized debt obligation (CDO) tranche prices (see \cite{schonbucher2005portfolio}). Recall that in an LI model, defaults are modeled via a counting process $\widehat{X}$ with intensity of the form $\lambda_{\mathrm{LI}}(t, \widehat{X}_{t-})$. Based on \eqref{eq:mp_condexp3} (here $\kappa_t(d\xi) = \eta_t \lambda(t, X_{t-}) \delta_1(d\xi)$), one can choose
\begin{equation*}
	\lambda(t, x) = \frac{\lambda_{\mathrm{LI}}(t, x)}{\mathbb{E}[\eta_t \,|\, X_{t-} = x]},
\end{equation*}
which leads to the McKean--Vlasov type martingale problem:
\begin{equation*}
	\biggl(X_t - \int_0^t \frac{\eta_s}{\mathbb{E}[\eta_s \,|\, X_{s-}]} \lambda_{\mathrm{LI}}(s, X_{s-}) \,ds\biggr)_{t \geq 0}
	\text{ is a martingale}.
\end{equation*}
Under this setting, $\widehat{X}$ is a Markovian projection of $X$. In particular, when $\widehat{X}$ is a Poisson process (i.e.\ $\lambda_{\mathrm{LI}}$ is a deterministic function of time $t$), we call $X$ a \emph{fake Poisson process}. This notion is analogous to the so-called fake Brownian motion.

Alfonsi, Labart and Lelong \cite{MR3481308} constructed solutions to the LSI model for $\eta_t = f(Y_t)$, where $Y$ is either an $\mathbb{N}$-valued Markov chain or an $\mathbb{R}$-valued jump diffusion process solving an SDE of the following type:
\begin{equation*}
	dY_t = b(t, X_{t-}, Y_{t-}) \,dt + \sigma(t, X_{t-}, Y_{t-}) \,dB_t + \gamma(t, X_{t-}, Y_{t-}) \,dX_t.
\end{equation*}
For the case of real-valued $Y$, they proved weak existence and uniqueness using a system of interacting particles and the Banach fixed point theorem respectively. They also showed strong existence and uniqueness.

In this paper, we establish the existence of solutions to the LSI model under weaker regularity assumptions, while our $\eta$ is an exogenously given process rather than being in the aforementioned feedback form involving $X$. Our main tools are the so-called Cox construction and suitable fixed point theorems. The existence proved in this paper is not in the classical weak or strong sense. It is not weak since we work on exactly the same probability space where $\eta$ is given. On the other hand, it is not strong since we have to enlarge the initial filtration in the construction of $X$. Even in this slightly nonstandard situation, one may ask about uniqueness in law. This requires additional assumptions, which turn out to imply that the solution must arise from a Cox construction. Another novelty of this paper is that we extend the jump size of $X$ to follow any discrete law with finite first moment. As we will see in the proof, allowing negative jump sizes brings new difficulties to the problem.

This paper is organized as follows. In Section~\ref{sec:2}, we gather all relevant information about the Cox construction. In Section~\ref{sec:3}, we prove our main results on inverting the Markovian projections for counting processes. We also discuss conditions for uniqueness in law. In Section~\ref{sec:4}, we extend the results in Section~\ref{sec:3} to general discrete jump size distributions. Throughout this paper, we follow the conventions listed below:
\begin{itemize}[nosep]
	\item $\mathbb{R}_+ = [0, \infty)$.
	
	\item $\mathbb{N}$ ($\mathbb{N}^*$) is the set of natural numbers including (excluding) $0$.
	
	\item $\mathbb{S}_+^d$ is the set of symmetric positive semi-definite $d \times d$ real matrices.
	
	\item Let $(a_i)_{i \in \mathbb{N}}$ be a sequence. Define $\sum_{i \in \varnothing} a_i \coloneqq 0$ and $\prod_{i \in \varnothing} a_i \coloneqq 1$.
	
	\item All semimartingales have c\`adl\`ag sample paths.
\end{itemize}

\section{Preliminary Results}\label{sec:2}
In this section, we review the Cox construction, which is the foundation for proving our main results. We briefly review some well-known results in Aksamit and Jeanblanc \cite{MR3729407}. Beyond that, we also complement the theory to serve our purposes.

\subsection{Basic Concepts}
We introduce some notation here, which will be used throughout this paper. First we need a notion of ``union'' for filtrations.

\begin{definition}\label{def:vee}
	Let $\mathcal{F}$, $\mathcal{G}$ and $(\mathcal{F}^i)_{i \in I}$ be $\sigma$-algebras on some space. Let $\mathbb{F} = (\mathcal{F}_t)_{t \geq 0}$, $\mathbb{G} = (\mathcal{G}_t)_{t \geq 0}$ and $(\mathbb{F}^i = (\mathcal{F}^i_t)_{t \geq 0})_{i \in I}$ be filtrations on the same space. We denote
	\begin{enumerate}[label=(\roman*), nosep]
		\item $\mathcal{F} \vee \mathcal{G} \coloneqq \sigma(\mathcal{F} \cup \mathcal{G})$ and $\bigvee_{i \in I} \mathcal{F}^i \coloneqq \sigma(\bigcup_{i \in I} \mathcal{F}^i)$,
		
		\item $\mathbb{F} \lor \mathbb{G} \coloneqq (\mathcal{F}_t \lor \mathcal{G}_t)_{t \geq 0}$ and $\bigvee_{i \in I} \mathbb{F}^i \coloneqq (\bigvee_{i \in I} \mathcal{F}^i_t)_{t \geq 0}$,
		
		\item $\mathbb{F} \vee \mathcal{G} \coloneqq (\mathcal{F}_t \lor \mathcal{G})_{t \geq 0}$. 
	\end{enumerate}
\end{definition}

There is a technical issue with the filtration $\mathbb{F} \lor \mathbb{G}$. Recall that many classical results in stochastic analysis rely on the completeness and right-continuity of filtrations (the so-called \emph{usual conditions}), so we always prefer to work with such ``nice'' filtrations. The problem is that even when both $\mathbb{F}$ and $\mathbb{G}$ satisfy the usual conditions, the right-continuity of $\mathbb{F} \lor \mathbb{G}$ may break. Thus, we need a slightly different notion of ``union'' for filtrations.

\begin{definition}
	Let $\mathbb{F} = (\mathcal{F}_t)_{t \geq 0}$ be a filtration. The \emph{right-continuous regularization} of $\mathbb{F}$ is the filtration defined by $(\bigcap_{s > t} \mathcal{F}_s)_{t \geq 0}$.
\end{definition}

Note that by construction, the right-continuous regularization of $\mathbb{F}$ is the smallest right-continuous filtration that includes $\mathbb{F}$. Now we can define the proper notion of ``union'' for filtrations.

\begin{definition}\label{def:tridwn}
	Under the setting of Definition~\ref{def:vee}, we define $\mathbb{F} \newtriangledown \mathbb{G}$, $\newbigtriangledown_{i \in I} \mathbb{F}^i$ and $\mathbb{F} \newtriangledown \mathcal{G}$ as the right-continuous regularization of $\mathbb{F} \lor \mathbb{G}$, $\bigvee_{i \in I} \mathbb{F}^i$ and $\mathbb{F} \vee \mathcal{G}$, respectively.
\end{definition}

In this paper, we prefer to work with $\newtriangledown$ rather than $\vee$. This is purely for technical reasons (i.e.\ right-continuity), and interchangeably using these two operators does not affect our main results. One important property satisfied by the operator $\newtriangledown$ is associativity, just like the operator $\vee$.

\begin{proposition}\label{prp:tridwn}
	Let $(\mathbb{F}^i)_{i \in I}$ be filtrations on some space. If $(I_j)_{j \in J}$ is a partition of the index set $I$, then $\newbigtriangledown_{i \in I} \mathbb{F}^i = \newbigtriangledown_{j \in J} (\newbigtriangledown_{i \in I_j} \mathbb{F}^i)$.
\end{proposition}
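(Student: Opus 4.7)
The plan is to reduce the proposition to two assertions, one for each inclusion, using two elementary facts about the right-continuous regularization operator: (i) it is monotone in the sense that if $\mathbb{F} \subseteq \mathbb{H}$ and $\mathbb{H}$ is right-continuous, then the right-continuous regularization of $\mathbb{F}$ is contained in $\mathbb{H}$; and (ii) the classical $\vee$ operation on $\sigma$-algebras is associative, so $\bigvee_{i \in I} \mathcal{F}^i_t = \bigvee_{j \in J} \bigvee_{i \in I_j} \mathcal{F}^i_t$ pointwise in $t$.

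For the inclusion $\newbigtriangledown_{i \in I} \mathbb{F}^i \subseteq \newbigtriangledown_{j \in J}(\newbigtriangledown_{i \in I_j} \mathbb{F}^i)$, I would fix $i \in I$ and let $j(i)$ denote the unique $j \in J$ with $i \in I_{j(i)}$. Then $\mathbb{F}^i \subseteq \newbigtriangledown_{i' \in I_{j(i)}} \mathbb{F}^{i'} \subseteq \newbigtriangledown_{j \in J}(\newbigtriangledown_{i' \in I_j} \mathbb{F}^{i'})$, all inclusions holding at every time $t$. Taking the join over $i \in I$ gives $\bigvee_{i \in I} \mathbb{F}^i \subseteq \newbigtriangledown_{j \in J}(\newbigtriangledown_{i \in I_j} \mathbb{F}^i)$. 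Since the right-hand side is right-continuous by construction, fact (i) yields the desired inclusion after applying the right-continuous regularization on the left.

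For the reverse inclusion, I would first observe that for each fixed $j \in J$, the inclusion $\bigvee_{i \in I_j} \mathbb{F}^i \subseteq \bigvee_{i \in I} \mathbb{F}^i \subseteq \newbigtriangledown_{i \in I} \mathbb{F}^i$ holds, so by fact (i) applied to the right-continuous filtration $\newbigtriangledown_{i \in I} \mathbb{F}^i$, we get $\newbigtriangledown_{i \in I_j} \mathbb{F}^i \subseteq \newbigtriangledown_{i \in I} \mathbb{F}^i$. Taking the join over $j \in J$ gives $\bigvee_{j \in J}(\newbigtriangledown_{i \in I_j} \mathbb{F}^i) \subseteq \newbigtriangledown_{i \in I} \mathbb{F}^i$, and a second application of fact (i) finishes the inclusion.

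There is no real obstacle here beyond bookkeeping; the only thing to be careful about is not conflating $\vee$ with $\newtriangledown$, since the former need not preserve right-continuity. The proof essentially says that $\newtriangledown$ is the composition of the associative operation $\vee$ with the idempotent closure operator ``right-continuous regularization'', and any such composition is automatically associative.
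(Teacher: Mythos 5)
Your proof is correct. The paper omits this proof as elementary, so there is nothing to compare it against; your two-inclusion argument via the minimality of right-continuous regularization (your fact (i)) is exactly the natural route, and the closing observation that $\newtriangledown$ is the composition of the associative join $\vee$ with an idempotent, monotone, extensive closure operator, which is automatically associative, is the general principle that makes the bookkeeping work.
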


The proof of Proposition~\ref{prp:tridwn} is elementary, so we omit it here. As a consequence, for finitely many filtrations $(\mathbb{F}^i)_{i=1}^n$, there is no ambiguity whether we define $\newbigtriangledown_{i=1}^n \mathbb{F}^i$ according to Definition~\ref{def:tridwn}, or inductively by $((\mathbb{F}^1 \newtriangledown \mathbb{F}^2) \newtriangledown \cdots) \newtriangledown \mathbb{F}^n$.

Next, we discuss another important notion called \emph{immersion}. Let $\mathbb{F} = (\mathcal{F}_t)_{t \geq 0}$ and $\mathbb{G} = (\mathcal{G}_t)_{t \geq 0}$ be filtrations. Similar to set inclusions, we write $\mathbb{F} \subseteq \mathbb{G}$ if $\mathcal{F}_t \subseteq \mathcal{G}_t$ for all $t \geq 0$. In such cases, if $M$ is a $\mathbb{G}$-martingale, then it is also an $\mathbb{F}$-martingale as long as it is $\mathbb{F}$-adapted. The converse direction is not true in general, i.e.\ an $\mathbb{F}$-martingale is not necessarily a $\mathbb{G}$-martingale. This leads to the following definition.

\begin{definition}[Immersion]
	Let $\mathbb{F}$ and $\mathbb{G}$ be filtrations with $\mathbb{F} \subseteq \mathbb{G}$. We say $\mathbb{F}$ is \emph{immersed} in $\mathbb{G}$ if every $\mathbb{F}$-martingale is a $\mathbb{G}$-martingale, and we denote this by $\mathbb{F} \hookrightarrow \mathbb{G}$.
\end{definition}

The immersion property is also known as \emph{Hypothesis $\mathscr{H}$} in the filtration enlargement literature. This notion seems to be first introduced in Br\'emaud and Yor \cite{MR511775}. For equivalent characterizations of this property, one may refer to \cite{MR511775}, Theorem~3, \cite{MR3729407}, Theorem~3.2 (and the remarks preceding it), and \cite{MR3618140}, Theorem~2. See also \cite{beiglbock2018denseness} for a new characterization in the context of progressively enlarged filtrations. One important application of immersion is the credit risk modeling in mathematical finance (see e.g.\ \cite{MR1802597} and \cite{MR2648600}).

Immersion is not easy to check in general. There are some special cases where immersion holds. Consider filtrations $\mathbb{F} \subseteq \mathbb{G} \subseteq \mathbb{H}$. If $\mathbb{F} \hookrightarrow \mathbb{G}$ and $\mathbb{G} \hookrightarrow \mathbb{H}$, then obviously $\mathbb{F} \hookrightarrow \mathbb{H}$. Conversely, if $\mathbb{F} \hookrightarrow \mathbb{H}$, then we have $\mathbb{F} \hookrightarrow \mathbb{G}$ (there is no guarantee that $\mathbb{G} \hookrightarrow \mathbb{H}$). Some less trivial cases of immersion are listed in the lemma below.

\begin{lemma}\label{lem:immerse}
	Let $\mathbb{F}$, $\mathbb{G}$ and $(\mathbb{F}^i)_{i \in \mathbb{N}}$ be filtrations on some space.
	\begin{enumerate}[label=(\roman*), nosep]
		\item If $\mathbb{G}$ is the right-continuous regularization of $\mathbb{F}$, then $\mathbb{F} \hookrightarrow \mathbb{G}$.
		
		\item If $\mathbb{F}^{i-1} \hookrightarrow \mathbb{F}^i$ for all $i \in \mathbb{N^*}$, then $\mathbb{F}^k \hookrightarrow \newbigtriangledown_{i=0}^\infty \mathbb{F}^i$ for all $k \in \mathbb{N}$.
		
		\item If $Z$ is a random variable independent of $\mathbb{F}$, then $\mathbb{F} \hookrightarrow \mathbb{F} \newtriangledown \sigma(Z)$.
	\end{enumerate}
\end{lemma}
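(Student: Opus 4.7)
The plan is to prove part (i) first and then reduce parts (ii) and (iii) to (i) by establishing immersion into an intermediate, non-right-continuous enlargement. This confines the ``right-continuous regularization'' issue to (i), so that the arguments for (ii) and (iii) only need to address the content-specific reason why the martingale property survives the enlargement.

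For (i), I will fix an $\mathbb{F}$-martingale $M$, which is c\`adl\`ag by the paper's convention. Adaptedness of $M$ to $\mathbb{G}$ is immediate from $\mathcal{F}_t \subseteq \bigcap_{s>t}\mathcal{F}_s = \mathcal{G}_t$. For the martingale property, pick $s < t$ and a sequence $s_n \downarrow s$ strictly inside $(s,t)$. Then $M_{s_n} = \mathbb{E}[M_t \,|\, \mathcal{F}_{s_n}]$ is a backward martingale along the decreasing family $(\mathcal{F}_{s_n})_n$, so backward martingale convergence gives $M_{s_n} \to \mathbb{E}[M_t \,|\, \bigcap_n \mathcal{F}_{s_n}] = \mathbb{E}[M_t \,|\, \mathcal{G}_s]$ almost surely and in $L^1$, while path right-continuity gives $M_{s_n} \to M_s$ almost surely. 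Matching the two limits yields $\mathbb{E}[M_t \,|\, \mathcal{G}_s] = M_s$.

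For (ii), set $\mathbb{F}^\infty = \bigvee_{i=0}^\infty \mathbb{F}^i$, whose right-continuous regularization is $\newbigtriangledown_{i=0}^\infty \mathbb{F}^i$. Transitivity of immersion (immediate from the definition) combined with the chain $\mathbb{F}^0 \hookrightarrow \mathbb{F}^1 \hookrightarrow \cdots$ gives $\mathbb{F}^k \hookrightarrow \mathbb{F}^n$ for every $n \geq k$. Given an $\mathbb{F}^k$-martingale $M$ and $s<t$, the identity $\mathbb{E}[M_t 1_A] = \mathbb{E}[M_s 1_A]$ therefore holds on the algebra $\bigcup_{n \geq k} \mathcal{F}^n_s = \bigcup_n \mathcal{F}^n_s$; a standard monotone class / $\pi$-$\lambda$ argument extends it to $\mathcal{F}^\infty_s = \sigma(\bigcup_n \mathcal{F}^n_s)$, yielding $\mathbb{F}^k \hookrightarrow \mathbb{F}^\infty$. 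Applying (i) then lifts the immersion from $\mathbb{F}^\infty$ to its right-continuous regularization.

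For (iii), I will first show $\mathbb{F} \hookrightarrow \mathbb{F} \vee \sigma(Z)$ and then invoke (i). Given an $\mathbb{F}$-martingale $M$, $s<t$, $A \in \mathcal{F}_s$, and $B \in \sigma(Z)$, independence of $Z$ from $\mathcal{F}_\infty$ factorizes the expectation to yield $\mathbb{E}[M_t 1_{A \cap B}] = \mathbb{E}[M_t 1_A] \mathbb{P}(B) = \mathbb{E}[M_s 1_A] \mathbb{P}(B) = \mathbb{E}[M_s 1_{A \cap B}]$. The product sets $A \cap B$ form a $\pi$-system generating $\mathcal{F}_s \vee \sigma(Z)$, so Dynkin's theorem extends the identity to all of $\mathcal{F}_s \vee \sigma(Z)$, and (i) finishes the argument. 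The only delicate point across the entire lemma is the backward limit in (i): identifying the $L^1$-limit coming from backward martingale convergence with the pathwise limit $M_{s+} = M_s$ relies on c\`adl\`ag sample paths, so without the paper's standing convention the argument would need to be restructured. Once (i) is in hand, parts (ii) and (iii) are routine extensions.
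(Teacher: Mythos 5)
Your proof is correct and follows essentially the same route as the paper: part (i) via backward martingale convergence plus path right-continuity, and parts (ii) and (iii) by first establishing immersion into the non-regularized enlargement and then invoking (i). The only cosmetic differences are that in (ii) you extend by a monotone class / $\pi$-$\lambda$ argument rather than quoting L\'evy's upward convergence theorem (equivalent content), and in (iii) you spell out the short independence factorization where the paper cites Aksamit--Jeanblanc, Proposition~1.22.
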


\begin{proof}
	For (i), write $\mathbb{F} = (\mathcal{F}_t)_{t \geq 0}$ and $\mathbb{G} = (\mathcal{G}_t)_{t \geq 0}$. Let $M$ be an $\mathbb{F}$-martingale, and let $0 \leq s < r < t$. By the $\mathbb{F}$-martingale property of $M$, we have
	\begin{equation*}
		\mathbb{E}[M_t \,|\, \mathcal{F}_r] = M_r.
	\end{equation*}
	Send $r \downarrow s$ along a sequence. On the left-hand side we use L\'evy's downward convergence theorem, and on the right-hand side we use the right-continuity of the sample paths of $M$ (which is our convention). This yields $\mathbb{E}[M_t \,|\, \mathcal{G}_s] = M_s$, so $M$ is a $\mathbb{G}$-martingale.
	
	For (ii), write $\mathbb{F}^i = (\mathcal{F}^i_t)_{t \geq 0}$ for $i \in \mathbb{N}$. Let $M$ be an $\mathbb{F}^k$-martingale, and let $0 \leq s < t$. Note that $\mathbb{F}^k \hookrightarrow \mathbb{F}^n$ for all $n \geq k$, so by the $\mathbb{F}^n$-martingale property of $M$, we have
	\begin{equation*}
		\mathbb{E}[M_t \,|\, \mathcal{F}^n_s] = M_s.
	\end{equation*}
	Send $n \uparrow \infty$. L\'evy's upward convergence theorem yields $\mathbb{E}[M_t \,|\, \bigvee_{i=0}^\infty \mathcal{F}^i_s] = M_s$, so $M$ is a ($\bigvee_{i=0}^\infty \mathbb{F}^i$)-martingale. This shows $\mathbb{F}^k \hookrightarrow \bigvee_{i=0}^\infty \mathbb{F}^i$, and applying (i) proves the result.
	
	For (iii), see \cite{MR3729407}, Proposition~1.22 and apply (i).
\end{proof}

Finally, for a filtration $\mathbb{F} = (\mathcal{F}_t)_{t \geq 0}$, we write $\mathcal{F}_\infty \coloneqq \bigvee_{t \geq 0} \mathcal{F}_t$.

\subsection{The Cox Construction}\label{subsec:cox}
From now on, we let $(\Omega, \mathcal{F}, \mathbb{P})$ be a probability space and $\mathbb{F} = (\mathcal{F}_t)_{t \geq 0}$ be a filtration on that space satisfying the usual conditions, i.e.\ $\mathbb{F}$ is right-continuous and $\mathcal{F}_0$ contains all $\mathbb{P}$-null sets. Unless stated otherwise, we always work on this filtered probability space. Now we discuss the Cox construction. The first lemma states the case of one jump time and constant jump size, which was already proved in \cite{MR3729407}. We summarize the results below.

\begin{lemma}\label{lem:cox}
	Let $\lambda$ be a nonnegative progressively measurable process, and denote $\Lambda_t \coloneqq \int_0^t \lambda_s \,ds$. Suppose that $\Lambda_t < \infty$ for all $t \geq 0$ and $\Lambda_\infty = \infty$, $\mathbb{P}$-a.s. Moreover, suppose that $\Omega$ supports an $\operatorname{Exp}(1)$ random variable $E$ which is independent of $\mathcal{F}_\infty$. Define the random time
	\begin{equation*}
		\tau \coloneqq \inf\{t > 0: \Lambda_t \geq E\},
	\end{equation*}
	and define the filtration $\mathbb{A} = (\mathcal{A}_t)_{t \geq 0}$ by $\mathcal{A}_t \coloneqq \sigma(\{\tau \leq s\}; s \leq t)$. Let $\mathbb{G} \coloneqq \mathbb{F} \newtriangledown \mathbb{A}$. Then, the process $(\bm{1}_{\{\tau \leq t\}} - \Lambda_{\tau \land t})_{t \geq 0}$ is a $\mathbb{G}$-martingale, and we have $\mathbb{F} \hookrightarrow \mathbb{G}$.
\end{lemma}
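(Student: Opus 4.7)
The plan is to first pin down the conditional law of $\tau$ given $\mathcal{F}_\infty$, and then derive both claims from it. Since $\Lambda$ has continuous, nondecreasing sample paths with $\Lambda_0 = 0$ and $\Lambda_\infty = \infty$ a.s., the event $\{\tau > t\}$ coincides with $\{\Lambda_t < E\}$, and $\tau < \infty$ a.s. Combining this with the independence of $E \sim \operatorname{Exp}(1)$ from $\mathcal{F}_\infty$, I obtain
\[
\mathbb{P}(\tau > t \mid \mathcal{F}_\infty) \;=\; \mathbb{P}(E > \Lambda_t \mid \mathcal{F}_\infty) \;=\; e^{-\Lambda_t}.
\]
The crucial observation is that $e^{-\Lambda_t}$ is already $\mathcal{F}_t$-measurable, so the Az\'ema-type supermartingale $Z_t := \mathbb{P}(\tau > t \mid \mathcal{F}_t)$ coincides with $e^{-\Lambda_t}$ and is in particular continuous and nonincreasing. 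From the same calculation, given $\mathcal{F}_\infty$ the restriction of the law of $\tau$ to $(s, \infty)$ has density $\lambda_u e^{-\Lambda_u}$.

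For the immersion $\mathbb{F} \hookrightarrow \mathbb{G}$, I would first establish $\mathbb{F} \hookrightarrow \mathbb{F} \vee \mathbb{A}$ and then pass to the right-continuous regularization $\mathbb{G}$ via Lemma~\ref{lem:immerse}(i) together with the transitivity of $\hookrightarrow$ remarked on above. To prove $\mathbb{F} \hookrightarrow \mathbb{F} \vee \mathbb{A}$, for any bounded $\mathbb{F}$-martingale $M$ and $s \leq t$ it suffices to show $\mathbb{E}[M_t \mid \mathcal{F}_s \vee \mathcal{A}_s] = M_s$. On $\{\tau > s\}$, the standard conditional expectation formula on a progressively enlarged filtration combined with $\mathbb{P}(\tau > s \mid \mathcal{F}_\infty) = e^{-\Lambda_s} \in \mathcal{F}_s$ gives $\mathbb{E}[M_t \mathbf{1}_{\{\tau > s\}} \mid \mathcal{F}_s]/Z_s$, which equals $M_s$ by the $\mathbb{F}$-martingale property of $M$. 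On $\{\tau \leq s\}$, the corresponding density formula collapses analogously because $M_t$ does not depend on $\tau$.

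For the $\mathbb{G}$-martingale property of $N_t := \mathbf{1}_{\{\tau \leq t\}} - \Lambda_{\tau \wedge t}$, I would compute $\mathbb{E}[N_t - N_s \mid \mathcal{G}_s]$ on the two pieces of $\mathcal{G}_s$ determined by $\{\tau \leq s\}$. On $\{\tau \leq s\}$, both $\mathbf{1}_{\{\tau \leq t\}} - \mathbf{1}_{\{\tau \leq s\}}$ and $\Lambda_{\tau \wedge t} - \Lambda_{\tau \wedge s}$ vanish pathwise, so nothing is left to check. On $\{\tau > s\}$, the enlargement formula reduces the computation to an $\mathcal{F}_\infty$-conditional one; using the density $\lambda_u e^{-\Lambda_u}$, the substitution $v = \Lambda_u$ (or a short integration by parts) delivers the key identity
\[
\int_s^\infty (\Lambda_{u \wedge t} - \Lambda_s)\, \lambda_u e^{-\Lambda_u}\, du \;=\; e^{-\Lambda_s} - e^{-\Lambda_t},
\]
which matches $\mathbb{E}[\mathbf{1}_{\{s < \tau \leq t\}} \mid \mathcal{F}_\infty]$. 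Hence the two contributions to $\mathbb{E}[(N_t - N_s)\mathbf{1}_{\{\tau > s\}} \mid \mathcal{F}_\infty]$ cancel, and after dividing by $\mathbb{P}(\tau > s \mid \mathcal{F}_s) = e^{-\Lambda_s}$ one obtains the desired vanishing.

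The main obstacle, to my mind, is the careful bookkeeping in the progressive enlargement: one needs that on the atom $\{\tau > s\} \in \mathcal{G}_s$ every $\mathcal{G}_\infty$-conditional expectation rewrites as an $\mathcal{F}_s$-conditional one reweighted by $e^{\Lambda_s}$, and on $\{\tau \leq s\}$ that the $\mathcal{F}_\infty$-density of $\tau$ transfers correctly down to $\mathcal{F}_s$. These are standard (a one-shot monotone class argument suffices) but must be invoked cleanly. Once they are in place, the right-continuity issue, i.e.\ passing from $\mathbb{F} \vee \mathbb{A}$ to $\mathbb{G} = \mathbb{F} \newtriangledown \mathbb{A}$, is handled for free by Lemma~\ref{lem:immerse}(i) plus transitivity, and the martingale identity on $\{\tau > s\}$ reduces to the displayed integration above.
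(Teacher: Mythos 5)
The paper does not present its own argument for this lemma; the ``proof'' is a pointer to Aksamit and Jeanblanc, Corollary~2.26 and Lemma~2.28. Your proposal reconstructs, correctly, the standard argument that underlies that citation: the conditional survival function $\P(\tau>t\mid\Fcal_\infty)=e^{-\Lambda_t}$ is already $\Fcal_t$-measurable, so the Az\'ema supermartingale $Z_t=\P(\tau>t\mid\Fcal_t)=e^{-\Lambda_t}$ is continuous and adapted; immersion then follows from the Br\'emaud--Yor type computation (your monotone-class check with $h(\tau\wedge s)$, noting that $\E[h(\tau\wedge s)\mid\Fcal_\infty]$ is $\Fcal_s$-measurable, is the cleanest packaging of the two-case split you sketch); and the compensated-indicator martingale property reduces, via the pre-$\tau$ enlargement formula, to the integral identity you display, which checks out with the substitution $v=\Lambda_u$. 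Two small points worth tightening if you were to write this out fully: first, to go from an $(\F\vee\mathbb{A})$-martingale to a $\G$-martingale one also needs the right-continuity of the sample paths of $N_t=\bm{1}_{\{\tau\le t\}}-\Lambda_{\tau\wedge t}$ (which holds, since $\Lambda$ is continuous) for Lemma~\ref{lem:immerse}(i) to apply; second, the integrability of $\Lambda_{\tau\wedge t}$ should be recorded (it follows from $\E[\Lambda_{\tau\wedge t}\mid\Fcal_\infty]=1-e^{-\Lambda_t}\le 1$). With those two remarks in place your argument is complete and is, in substance, the same route the cited reference takes.
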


\begin{proof}
	See \cite{MR3729407}, Corollary~2.26 and Lemma~2.28.
\end{proof}

\begin{remark}
	By the assumptions on $\Lambda$, we have $\tau < \infty$ $\mathbb{P}$-a.s. On the other hand, the random time $\tau$ is not an $\mathbb{F}$-stopping time, since $E$ is independent of $\mathcal{F}_\infty$. However, $\tau$ is a $\mathbb{G}$-stopping time. Indeed, $\mathbb{G}$ is the smallest right-continuous filtration that includes $\mathbb{F}$ and makes $\tau$ a stopping time.
\end{remark}

Next we extend Lemma~\ref{lem:cox} to the case of random jump size, instead of constant $1$ as for the indicator process $(\bm{1}_{\{\tau \leq t\}})_{t \geq 0}$. For convenience, we call a random variable $J$ \emph{non-vanishing}, if $\mathbb{P}(J \neq 0) = 1$.

\begin{lemma}\label{lem:cox_J}
	Let $\lambda$, $\Lambda$, $E$ and $\tau$ be given by Lemma~\ref{lem:cox}. Furthermore, suppose that $\Omega$ supports a non-vanishing integrable random variable $J$ which is independent of $\mathcal{F}_\infty \lor \sigma(E)$. Define the filtration $\widetilde{\mathbb{A}} = (\widetilde{\mathcal{A}}_t)_{t \geq 0}$ by $\widetilde{\mathcal{A}}_t \coloneqq \sigma(J\bm{1}_{\{\tau \leq s\}}; s \leq t)$. Let $\widetilde{\mathbb{G}} \coloneqq \mathbb{F} \newtriangledown \widetilde{\mathbb{A}}$. Then, the process $(J\bm{1}_{\{\tau \leq t\}} - \mathbb{E}[J]\Lambda_{\tau \land t})_{t \geq 0}$ is a $\widetilde{\mathbb{G}}$-martingale, and we have $\mathbb{F} \hookrightarrow \widetilde{\mathbb{G}}$.
\end{lemma}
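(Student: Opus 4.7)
The plan is to reduce the claim to Lemma~\ref{lem:cox} by introducing the auxiliary filtration $\mathbb{H} \coloneqq \mathbb{G} \newtriangledown \sigma(J)$, where $\mathbb{G}$ is the filtration from Lemma~\ref{lem:cox}. First I would observe that $\tau$ is a measurable function of $\mathcal{F}_\infty$ and $E$, so $\mathcal{A}_\infty = \sigma(\tau) \subseteq \mathcal{F}_\infty \vee \sigma(E)$, which in turn gives $\mathcal{G}_\infty \subseteq \mathcal{F}_\infty \vee \sigma(E)$ (the right-continuous regularization does not enlarge the tail $\sigma$-algebra beyond $\mathcal{F}_\infty \vee \mathcal{A}_\infty$). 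Consequently $J$ is independent of $\mathcal{G}_\infty$, and Lemma~\ref{lem:immerse}(iii) yields $\mathbb{G} \hookrightarrow \mathbb{H}$. Combined with $\mathbb{F} \hookrightarrow \mathbb{G}$ from Lemma~\ref{lem:cox}, this gives $\mathbb{F} \hookrightarrow \mathbb{H}$. Next, since $J$ is non-vanishing, one has $\widetilde{\mathcal{A}}_t \supseteq \mathcal{A}_t$, while clearly $\widetilde{\mathcal{A}}_t \subseteq \mathcal{A}_t \vee \sigma(J)$; therefore $\mathbb{F} \subseteq \widetilde{\mathbb{G}} \subseteq \mathbb{H}$. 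The principle that immersion into a bigger filtration passes to any intermediate subfiltration (recalled in the paragraph preceding Lemma~\ref{lem:immerse}) then upgrades $\mathbb{F} \hookrightarrow \mathbb{H}$ to $\mathbb{F} \hookrightarrow \widetilde{\mathbb{G}}$.

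For the martingale claim, I first point out that $M_t \coloneqq J \bm{1}_{\{\tau \leq t\}} - \mathbb{E}[J] \Lambda_{\tau \land t}$ is \emph{not} an $\mathbb{H}$-martingale, because $\mathbb{H}$ reveals $J$ at time zero and thus produces the ``wrong'' compensator $J \Lambda_{\tau \land \cdot}$; hence the martingale property must be verified directly in $\widetilde{\mathbb{G}}$. Integrability is immediate from $\mathbb{E}|J \bm{1}_{\{\tau \leq t\}}| \leq \mathbb{E}|J|$ (by independence) and $\mathbb{E}[\Lambda_{\tau \land t}] \leq 1$ (from the $\mathbb{G}$-martingale $N_t \coloneqq \bm{1}_{\{\tau \leq t\}} - \Lambda_{\tau \land t}$ of Lemma~\ref{lem:cox}). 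For $s < t$ and $A \in \widetilde{\mathcal{G}}_s$, I split $A = (A \cap \{\tau \leq s\}) \sqcup (A \cap \{\tau > s\})$. On $\{\tau \leq s\}$ the process $M$ is frozen at $M_s$, so its increment vanishes. On $\{\tau > s\}$, granting the key measurability claim that $B \coloneqq A \cap \{\tau > s\} \in \mathcal{G}_s$, the independence of $J$ from $\mathcal{G}_\infty$ allows factoring $\mathbb{E}[J]$ out of the expectation, reducing the desired identity to $\mathbb{E}[J] \cdot \mathbb{E}[(N_t - N_s) \bm{1}_B] = 0$, which follows from the $\mathbb{G}$-martingale property of $N$.

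The main technical obstacle is the measurability claim itself: for any $A \in \widetilde{\mathcal{G}}_s$, $A \cap \{\tau > s\} \in \mathcal{G}_s$. This requires unpacking the right-continuous regularization. For any $r > s$, $A \in \mathcal{F}_r \vee \widetilde{\mathcal{A}}_r$; on the event $\{\tau > r\}$ the $\sigma$-algebra $\widetilde{\mathcal{A}}_r$ collapses to $\mathcal{A}_r$, since $J \bm{1}_{\{\tau \leq q\}} \equiv 0$ for all $q \leq r$. Thus $A \cap \{\tau > r\} \in \mathcal{F}_r \vee \mathcal{A}_r \subseteq \mathcal{G}_r$. Writing $A \cap \{\tau > s\} = \bigcup_{n \geq 1} A \cap \{\tau > s + 1/n\}$ and invoking the right-continuity of $\mathbb{G}$ then yields $A \cap \{\tau > s\} \in \mathcal{G}_s$, completing the argument.
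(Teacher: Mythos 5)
Your argument is correct, and the immersion half is essentially the paper's: both establish $\mathbb{G} \subseteq \widetilde{\mathbb{G}} \subseteq \mathbb{G} \newtriangledown \sigma(J)$ and invoke Lemma~\ref{lem:immerse}~(iii) (you spell out the needed independence $J \perp \mathcal{G}_\infty$ via $\mathcal{G}_\infty \subseteq \mathcal{F}_\infty \vee \sigma(E)$, which the paper uses implicitly). The martingale half, however, follows a genuinely different route. The paper decomposes the \emph{process}, writing $J\bm{1}_{\{\tau \leq t\}} - \mathbb{E}[J]\Lambda_{\tau \wedge t} = (J-\mathbb{E}[J])\bm{1}_{\{\tau \leq t\}} + \mathbb{E}[J]\bigl(\bm{1}_{\{\tau \leq t\}}-\Lambda_{\tau \wedge t}\bigr)$; the second piece is inherited from $\mathbb{G}$ by immersion, and the first is shown to be an $(\mathbb{F}\vee\widetilde{\mathbb{A}})$-martingale by a Dynkin $\pi$-$\lambda$ argument on generators, after which Lemma~\ref{lem:immerse}~(i) upgrades to $\widetilde{\mathbb{G}}$. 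You instead decompose the \emph{event}: on $\{\tau \leq s\}$ the increment is zero, and on $\{\tau > s\}$ you show $A \cap \{\tau > s\} \in \mathcal{G}_s$ via a trace $\sigma$-algebra argument (since $\widetilde{\mathcal{A}}_r$ is trivial on $\{\tau > r\}$) plus right-continuity, which then lets you factor out $\mathbb{E}[J]$ and reduce to the $\mathbb{G}$-martingale $N$ from Lemma~\ref{lem:cox}. Both proofs ultimately rest on the same core observation that the enlargement $\widetilde{\mathbb{A}}$ carries no information about $J$ prior to $\tau$; the paper encodes this through the $\pi$-$\lambda$ computation $\prod_i \bm{1}_{\{0\in B_i\}}$, you encode it through the trace. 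The paper's route is a bit cleaner because passing to $\mathbb{F}\vee\widetilde{\mathbb{A}}$ first avoids the extra right-continuity bookkeeping at the end; your route is self-contained and has the side benefit of explicitly explaining why one cannot simply immerse into $\mathbb{G}\newtriangledown\sigma(J)$, a point the paper's decomposition quietly sidesteps.
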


\begin{proof}
	Let $\mathbb{A}$ and $\mathbb{G}$ be the filtrations defined in Lemma~\ref{lem:cox}, i.e.\ $\mathbb{A} = (\mathcal{A}_t)_{t \geq 0}$ with $\mathcal{A}_t = \sigma(\{\tau \leq s\}; s \leq t)$, and $\mathbb{G} = \mathbb{F} \newtriangledown \mathbb{A}$. We notice that $\{\tau \leq s\} = \{J\bm{1}_{\{\tau \leq s\}} \neq 0\}$ since $J$ is non-vanishing. This implies that $\mathcal{A}_t \subseteq \widetilde{\mathcal{A}}_t \subseteq \mathcal{A}_t \vee \sigma(J)$ for all $t \geq 0$, i.e.\ $\mathbb{A} \subseteq \widetilde{\mathbb{A}} \subseteq \mathbb{A} \vee \sigma(J)$. From this we get the filtration inclusion $\mathbb{G} \subseteq \widetilde{\mathbb{G}} \subseteq \mathbb{G} \newtriangledown \sigma(J)$. By Lemma~\ref{lem:immerse}~(iii) and Lemma~\ref{lem:cox}, we have $\mathbb{F} \hookrightarrow \mathbb{G} \hookrightarrow \mathbb{G} \newtriangledown \sigma(J)$. This suffices to show $\mathbb{F} \hookrightarrow \mathbb{G} \hookrightarrow \widetilde{\mathbb{G}}$.
	
	Next, to show that the process $(J\bm{1}_{\{\tau \leq t\}} - \mathbb{E}[J]\Lambda_{\tau \land t})_{t \geq 0}$ is a $\widetilde{\mathbb{G}}$-martingale, let us decompose it into two parts:
	\begin{equation}\label{eq:cox_decomp}
		J\bm{1}_{\{\tau \leq t\}} - \mathbb{E}[J]\Lambda_{\tau \land t}
		= (J - \mathbb{E}[J]) \bm{1}_{\{\tau \leq t\}} + \mathbb{E}[J] (\bm{1}_{\{\tau \leq t\}} - \Lambda_{\tau \land t}).
	\end{equation}
	The second term in \eqref{eq:cox_decomp} is a $\mathbb{G}$-martingale according to Lemma~\ref{lem:cox}, thus a $\widetilde{\mathbb{G}}$-martingale by immersion. For the first term in \eqref{eq:cox_decomp}, we only need to show that it is a ($\mathbb{F} \vee \widetilde{\mathbb{A}}$)-martingale due to Lemma~\ref{lem:immerse}~(i). Let $0 \leq s < t$. Our goal is to show
	\begin{equation*}
		\mathbb{E}[(J - \mathbb{E}[J]) \bm{1}_{\{\tau \leq t\}} \,|\, \mathcal{F}_s \vee \widetilde{\mathcal{A}}_s]
		= (J - \mathbb{E}[J]) \bm{1}_{\{\tau \leq s\}},
	\end{equation*}
	or equivalently $\mathbb{E}[(J - \mathbb{E}[J]) \bm{1}_{\{s < \tau \leq t\}} \,|\, \mathcal{F}_s \vee \widetilde{\mathcal{A}}_s] = 0$. By Dynkin's $\pi$-$\lambda$ theorem, it suffices to show 
	\begin{equation}\label{eq:cox_mtg}
		\mathbb{E}\Biggl[(J - \mathbb{E}[J]) \bm{1}_{\{s < \tau \leq t\}} \bm{1}_A \prod_{i=1}^n \bm{1}_{\{J\bm{1}_{\{\tau \leq s_i\}} \in B_i\}}\Biggr] = 0,
	\end{equation}
	for all $A \in \mathcal{F}_s$, $B_1, ..., B_n \in \mathcal{B}(\mathbb{R})$, $0 \leq s_1 < \cdots < s_n \leq s$ and $n \in \mathbb{N}^*$. However, since $J$ is independent of $(\mathcal{F}_\infty, \tau)$, the left-hand side of \eqref{eq:cox_mtg} simplifies to
	\begin{equation*}
		\mathbb{E}\Biggl[(J - \mathbb{E}[J]) \bm{1}_{\{s < \tau \leq t\}} \bm{1}_A \prod_{i=1}^n \bm{1}_{\{0 \in B_i\}}\Biggr]
		= \prod_{i=1}^n \bm{1}_{\{0 \in B_i\}} \mathbb{E}[J - \mathbb{E}[J]] \mathbb{E}[\bm{1}_{\{s < \tau \leq t\}} \bm{1}_A] = 0,
	\end{equation*}
	which finishes the proof.
\end{proof}

\begin{remark}\label{rem:cox_h(J)}
	Under the setting of Lemma~\ref{lem:cox_J}, for any measurable function $h: \mathbb{R} \to \mathbb{R}$ such that $h(J)$ is integrable, one can show that the process $(h(J)\bm{1}_{\{\tau \leq t\}} - \mathbb{E}[h(J)]\Lambda_{\tau \land t})_{t \geq 0}$ is a $\widetilde{\mathbb{G}}$-martingale. Indeed, since $J$ is non-vanishing, without loss of generality we may assume $h(0) = 0$. This gives us $h(J)\bm{1}_{\{\tau \leq t\}} = h(J\bm{1}_{\{\tau \leq t\}})$, so $\widetilde{\mathbb{G}}$-adaptedness holds. Then, the martingale property can be proved in exactly the same manner as above.
\end{remark}

We can further extend Lemma~\ref{lem:cox_J} by considering infinitely many jumps. Below is the main theorem of this section.

\begin{theorem}[Cox Construction]\label{thm:cox}
	Let $(\lambda^k)_{k \in \mathbb{N}^*}$ be a sequence of measurable processes, satisfying $c \leq \lambda^k \leq C$ $(\mathbb{P} \otimes dt)$-a.e., $\forall\, k \in \mathbb{N}^*$, for some constants $0 < c < C$. Suppose that $\Omega$ supports a sequence of i.i.d.\ $\operatorname{Exp}(1)$ random variables $(E_k)_{k \in \mathbb{N}^*}$ and a sequence of i.i.d.\ non-vanishing integrable random variables $(J_k)_{k \in \mathbb{N}^*}$ such that $\mathcal{F}_\infty$, $(E_k)_{k \in \mathbb{N}^*}$ and $(J_k)_{k \in \mathbb{N}^*}$ are independent. Let $\tau_0 \coloneqq 0$. For $k \in \mathbb{N}^*$, inductively define the random time
	\begin{equation*}
		\tau_k \coloneqq \inf\biggl\{t > \tau_{k-1}: \int_{\tau_{k-1}}^t \lambda^k_s \,ds \geq E_k\biggr\},
	\end{equation*}
	and define the filtration $\mathbb{A}^k = (\mathcal{A}^k_t)_{t \geq 0}$ by $\mathcal{A}^k_t \coloneqq \sigma(J_k \bm{1}_{\{\tau_k \leq s\}}; s \leq t)$. Let $\mathbb{G}^0 \coloneqq \mathbb{F}$, $\mathbb{G}^k \coloneqq \mathbb{F} \newtriangledown (\newbigtriangledown_{i=1}^k \mathbb{A}^i)$ for $k \in \mathbb{N}^*$ and $\mathbb{G} \coloneqq \mathbb{F} \newtriangledown (\newbigtriangledown_{i=1}^\infty \mathbb{A}^i)$. Suppose that each $\lambda^k$ is progressively measurable with respect to $\mathbb{G}^{k-1}$. Then, the pure jump process $X_t \coloneqq \sum_{k=1}^\infty J_k \bm{1}_{\{\tau_k \leq t\}}$, $t \geq 0$, is well-defined and the process
	\begin{equation*}
		M_t \coloneqq X_t - \mathbb{E}[J_1] \int_0^t \sum_{k=1}^\infty \bm{1}_{\{\tau_{k-1} < s \leq \tau_k\}} \lambda^k_s \,ds,\quad t \geq 0,
	\end{equation*}
	is a $\mathbb{G}$-martingale.
\end{theorem}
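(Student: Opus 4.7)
The plan is to establish both claims by an induction on $k$ that at each step applies Lemma~\ref{lem:cox_J} inside the enlarged filtration $\mathbb{G}^{k-1}$, and then to reassemble the inter-jump pieces into $M$ using the immersions produced along the way together with dominated convergence.

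First, to make $X$ well-defined I would rule out explosion. The upper bound $\lambda^k \leq C$ forces $\tau_k - \tau_{k-1} \geq E_k / C$, so $\tau_k \geq \sum_{i=1}^k E_i / C$ and the strong law applied to the i.i.d.\ sequence $(E_i)$ gives $\tau_k \to \infty$ almost surely. The same bound shows that $N_t := \#\{k : \tau_k \leq t\}$ is dominated pathwise by $\widetilde N_t := \#\{k : \sum_{i=1}^k E_i \leq C t\}$, which is a rate-$C$ Poisson process depending only on $(E_i)$ and hence independent of $(J_k)$. This yields the global estimates
\begin{equation*}
	\mathbb{E}\Biggl[\sum_{k=1}^\infty |J_k| \bm{1}_{\{\tau_k \leq t\}}\Biggr] \leq \mathbb{E}[\widetilde N_t] \, \mathbb{E}[|J_1|] = C t \, \mathbb{E}[|J_1|], \qquad \int_0^t \sum_{k=1}^\infty \bm{1}_{\{\tau_{k-1} < s \leq \tau_k\}} \lambda^k_s \, ds \leq C t,
\end{equation*}
which will serve as integrable dominants later.

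For the inductive step I would introduce the stopped intensity $\lambda^{(k)}_s := \bm{1}_{\{s > \tau_{k-1}\}} \lambda^k_s$. It is nonnegative and $\mathbb{G}^{k-1}$-progressive, satisfies $\int_0^\infty \lambda^{(k)}_s \, ds = \infty$ a.s.\ because $\lambda^{(k)} \geq c$ on $(\tau_{k-1}, \infty)$ with $\tau_{k-1} < \infty$ by induction, and it realizes $\tau_k = \inf\{t > 0 : \int_0^t \lambda^{(k)}_s \, ds \geq E_k\}$. The mutual independence of $\mathcal{F}_\infty$, $(E_i)$, and $(J_i)$, together with the inductive inclusion $\mathcal{G}^{k-1}_\infty \subseteq \mathcal{F}_\infty \vee \sigma(E_1, J_1, \dots, E_{k-1}, J_{k-1})$, shows that $E_k$ is $\operatorname{Exp}(1)$ and independent of $\mathcal{G}^{k-1}_\infty$, while $J_k$ is non-vanishing, integrable, and independent of $(\mathcal{G}^{k-1}_\infty, E_k)$. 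Lemma~\ref{lem:cox_J}, applied with base filtration $\mathbb{G}^{k-1}$, intensity $\lambda^{(k)}$, and ingredients $(E_k, J_k)$, then yields the immersion $\mathbb{G}^{k-1} \hookrightarrow \mathbb{G}^k$ and shows that
\begin{equation*}
	M^k_t := J_k \bm{1}_{\{\tau_k \leq t\}} - \mathbb{E}[J_1] \int_0^t \bm{1}_{\{\tau_{k-1} < s \leq \tau_k\}} \lambda^k_s \, ds
\end{equation*}
is a $\mathbb{G}^k$-martingale, after the rewriting $\int_0^{\tau_k \wedge t} \lambda^{(k)}_s \, ds = \int_0^t \bm{1}_{\{\tau_{k-1} < s \leq \tau_k\}} \lambda^k_s \, ds$ and using $\mathbb{E}[J_k] = \mathbb{E}[J_1]$.

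Finally, Lemma~\ref{lem:immerse}(ii) applied to the chain $(\mathbb{G}^i)_{i \geq 0}$, combined with Proposition~\ref{prp:tridwn} to identify $\newbigtriangledown_{i=0}^\infty \mathbb{G}^i$ with $\mathbb{G}$, upgrades each $M^k$ to a $\mathbb{G}$-martingale. Since at each $t$ the identity $M_t = \sum_{k=1}^\infty M^k_t$ holds pointwise with only finitely many nonzero terms, and the partial sums are dominated by the a.s.\ finite bounds from the first step, dominated convergence carries $\mathbb{E}\bigl[\sum_{k=1}^N M^k_t \mid \mathcal{G}_s\bigr] = \sum_{k=1}^N M^k_s$ to the limit to give $\mathbb{E}[M_t \mid \mathcal{G}_s] = M_s$. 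The hardest part will be the filtration bookkeeping in the inductive step: carefully verifying that at each $k$ the pair $(E_k, J_k)$ has exactly the independence relative to $\mathbb{G}^{k-1}$ demanded by Lemma~\ref{lem:cox_J}, and that the filtration the lemma produces coincides with the $\mathbb{G}^k$ defined in the theorem statement; once this is set, the infinite-sum passage is routine given the Poisson-type domination.
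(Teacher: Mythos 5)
Your proposal is correct and mirrors the paper's proof: sandwich the $\tau_k$'s between Poisson jump times using $\lambda^k \leq C$ to rule out explosion and to obtain the integrable compound-Poisson dominant; apply Lemma~\ref{lem:cox_J} on the base space $(\Omega,\mathcal{F},\mathbb{G}^{k-1},\mathbb{P})$ with the truncated intensity $\mathbf{1}_{\{s>\tau_{k-1}\}}\lambda^k_s$ to make each $M^k$ a $\mathbb{G}^k$-martingale and obtain the immersion $\mathbb{G}^{k-1}\hookrightarrow\mathbb{G}^k$; upgrade to $\mathbb{G}$-martingales via Lemma~\ref{lem:immerse}(ii) and Proposition~\ref{prp:tridwn}; and sum up using the domination. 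The only cosmetic difference is at the final step: the paper first observes $M_{\tau_k\wedge t}=\sum_{i\le k}M^i_t$, so $M$ is a $\mathbb{G}$-local martingale localized by $(\tau_k)$, and then promotes it to a true martingale because $M^*_t$ is integrable via the compound-Poisson bound, whereas you pass to the limit directly with conditional dominated convergence; these are two phrasings of the same estimate and both are valid.
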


\begin{proof}
	First we show that $X$ is well-defined. Let $\sigma_k \coloneqq C^{-1} \sum_{i=1}^k E_i$ and $\widetilde{\sigma}_k \coloneqq c^{-1} \sum_{i=1}^k E_i$ for $k \in \mathbb{N}$. Since each $\lambda^k$ is bounded from above and below by $C$ and $c$, it is easy to check that $\sigma_k \leq \tau_k \leq \widetilde{\sigma}_k$ for all $k$ $\mathbb{P}$-a.s., i.e.\ the sequence $(\tau_k)_{k \in \mathbb{N}}$ is ``sandwiched'' between the jump times of two Poisson processes with intensities $C$ and $c$. This shows that $(\tau_k)_{k \in \mathbb{N}}$ is a sequence of finite $\mathbb{G}$-stopping times that increases to infinity $\mathbb{P}$-a.s. In particular, $X_t(\omega)$ is a finite sum for each $t \geq 0$ and $\mathbb{P}$-a.s.\ $\omega \in \Omega$.
	
	Next for each $k \in \mathbb{N}^*$, we define the process
	\begin{equation*}
		M^k_t \coloneqq J_k \bm{1}_{\{\tau_k \leq t\}} - \mathbb{E}[J_k] \int_{\tau_{k-1} \land t}^{\tau_k \land t} \lambda^k_s \,ds,\quad t \geq 0,
	\end{equation*}
	and show that it is a $\mathbb{G}$-martingale. Take $(\Omega, \mathcal{F}, \mathbb{G}^{k-1}, \mathbb{P})$ as our underlying filtered probability space, on which $\mathbb{G}^{k-1}$, $E_k$ and $J_k$ are independent. Define the process $\xi^k_t \coloneqq \bm{1}_{\{\tau_{k-1} < t\}} \lambda^k_t$, $t \geq 0$, which is nonnegative and progressively measurable with respect to $\mathbb{G}^{k-1}$. By the boundedness assumption on $\lambda^k$ and the finiteness of $\tau_{k-1}$, clearly we have $\int_0^t \xi^k_s \,ds < \infty$ for all $t \geq 0$ and $\int_0^\infty \xi^k_s \,ds = \infty$, $\mathbb{P}$-a.s. Note that we can rewrite
	\begin{equation*}
		\tau_k = \inf\biggl\{t > 0: \int_0^t \xi^k_s \,ds \geq E_k\biggr\},
	\end{equation*}
	and $\mathbb{G}^k = \mathbb{G}^{k-1} \newtriangledown \mathbb{A}^k$ using Proposition~\ref{prp:tridwn}. Thus, we are in a position to apply Lemma~\ref{lem:cox_J} to deduce that the process $M^k_t = J_k \bm{1}_{\{\tau_k \leq t\}} - \mathbb{E}[J_k] \int_0^{\tau_k \land t} \xi^k_s \,ds$, $t \geq 0$, is a $\mathbb{G}^k$-martingale, and $\mathbb{G}^{k-1} \hookrightarrow \mathbb{G}^k$. Then, to show that $M^k$ is a $\mathbb{G}$-martingale, it suffices to show $\mathbb{G}^k \hookrightarrow \mathbb{G}$. However, this is a direct consequence of Lemma~\ref{lem:immerse}~(ii), once we notice that $\mathbb{G}$ can be rewritten as $\mathbb{G} = \newbigtriangledown_{i=0}^\infty \mathbb{G}^i$ using Proposition~\ref{prp:tridwn}.
	
	Finally, we show that $M$ is a $\mathbb{G}$-martingale. From what we have proved so far, we know $M$ is a $\mathbb{G}$-local martingale with a localizing sequence $(\tau_k)_{k \in \mathbb{N}}$ since $M_{\tau_k \land t} = \sum_{i=1}^k M^i_t$. To show that it is a (true) martingale, we define $N_t \coloneqq \sum_{k=1}^\infty |J_k| \bm{1}_{\{\sigma_k \leq t\}}$, which is a compound Poisson process with intensity $C$ and jump size distributed as $|J_1|$. Note that one has $|M_t| \leq N_t + C\mathbb{E}[|J_1|]t$, which implies that $M^*_t \coloneqq \sup_{s \leq t} |M_s|$ is integrable for every $t \geq 0$. This suffices to show the martingale property, and the proof is complete.
\end{proof}

\begin{remark}\label{rem:cox_h(J)s}
	Following the same argument as in Remark~\ref{rem:cox_h(J)}, under the setting of Theorem~\ref{thm:cox}, for any measurable function $h: \mathbb{R} \to \mathbb{R}$ such that $h(J_1)$ is integrable, one can show the process
	\begin{equation*}
		M^h_t \coloneqq X^h_t - \mathbb{E}[h(J_1)] \int_0^t \sum_{k=1}^\infty \bm{1}_{\{\tau_{k-1} < s \leq \tau_k\}} \lambda^k_s \,ds,\quad t \geq 0,
	\end{equation*}
	is a $\mathbb{G}$-martingale, where $X^h_t \coloneqq \sum_{k=1}^\infty h(J_k) \bm{1}_{\{\tau_k \leq t\}}$.
\end{remark}

The pure jump process $X$ constructed in Theorem~\ref{thm:cox} is an It\^o semimartingale, so we can talk about its differential characteristics. For the self-containment of this paper, we briefly recall the definition of differential characteristics, which is cited from \cite{MR4814246}, Definition~2.8.

\begin{definition}\label{def:diff_char}
	Let $X$ be an $\mathbb{R}^d$-valued It\^o semimartingale. The \emph{differential characteristics} of $X$ associated with a truncation function $h: \mathbb{R}^d \to \mathbb{R}^d$ is the triplet $(\beta, \alpha, \kappa)$ consisting in:\footnote{By truncation function, we mean that $h$ is measurable, bounded and $h(x) = x$ in a neighborhood of $0$.}
	\begin{enumerate}[label=(\roman*), nosep]
		\item $\beta$ is an $\mathbb{R}^d$-valued predictable process such that $\int_0^\cdot \beta_s \,ds$ is the predictable finite variation part of the special semimartingale $X^h \coloneqq X - \sum_{s \leq \cdot} (\Delta X_s - h(\Delta X_s))$.
		
		\item $\alpha$ is an $\mathbb{S}_+^d$-valued predictable process such that $\int_0^\cdot \alpha_s \,ds = \langle X^c, X^c \rangle \coloneqq (\langle X^{i, c}, X^{j, c} \rangle)_{1 \leq i, j \leq d}$, where $X^c = (X^{i, c})_{1 \leq i \leq d}$ is the continuous local martingale part of $X$.
		
		\item $\kappa$ is a predictable L\'evy transition kernel from $\Omega \times \mathbb{R}_+$ to $\mathbb{R}^d$ such that $\kappa_t(d\xi) dt$ is the compensator of the jump measure $\mu^X$ of $X$, where $\mu^X(dt, d\xi) \coloneqq \sum_{s > 0} \bm{1}_{\{\Delta X_s \neq 0\}} \delta_{(s, \Delta X_s)}(dt, d\xi)$ is an integer-valued random measure on $\mathbb{R}_+ \times \mathbb{R}^d$.\footnote{By L\'evy transition kernel, we mean that $\kappa_{\omega, t}(d\xi)$ is a L\'evy measure on $\mathbb{R}^d$ for each $(\omega, t) \in \Omega \times \mathbb{R}_+$, i.e.\ $\kappa_{\omega, t}(\{0\}) = 0$ and $\int_{\mathbb{R}^d} 1 \land |\xi|^2 \,\kappa_{\omega, t}(d\xi) < \infty$.}
	\end{enumerate}
\end{definition}

\begin{theorem}\label{thm:cox2charac}
	Let $X$, $\mathbb{G}$, $(J_k)_{k \in \mathbb{N}^*}$, $(\tau_k)_{k \in \mathbb{N}}$ and $(\lambda^k)_{k \in \mathbb{N}^*}$ be given by Theorem~\ref{thm:cox}. Furthermore, suppose that each $\lambda^k$ is predictable with respect to $\mathbb{G}$. Let $h: \mathbb{R} \to \mathbb{R}$ be a truncation function. Then, $X$ is a $\mathbb{G}$-It\^o semimartingale whose differential characteristics associated with $h$ are
	\begin{equation*}
		\beta_t = \mathbb{E}[h(J_1)] \lambda_t,\quad
		\alpha_t = 0,\quad
		\kappa_t(d\xi) = \lambda_t \nu(d\xi),
	\end{equation*}
	where $\nu$ is the law of $J_1$ and $\lambda_t \coloneqq \sum_{k=1}^\infty \bm{1}_{\{\tau_{k-1} < t \leq \tau_k\}} \lambda^k_t$.
\end{theorem}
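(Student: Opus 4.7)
The plan is to identify the canonical semimartingale decomposition of $X$ with respect to the truncation $h$, and to read off $(\beta, \alpha, \kappa)$ from it. Concretely, I would (i) argue $X$ has no continuous local martingale part, (ii) identify the compensator of $\mu^X = \sum_k \delta_{(\tau_k, J_k)}$ as $\nu^X(ds, d\xi) = \lambda_s\, \nu(d\xi)\, ds$, and (iii) match the remaining predictable finite variation part with $\int_0^t \beta_s\,ds$.

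Step (i) follows quickly: Theorem~\ref{thm:cox} expresses $X$ as a $\mathbb{G}$-martingale plus an absolutely continuous drift, and on any compact interval $X$ has only finitely many jumps $\mathbb{P}$-a.s., so $X$ has pathwise locally finite variation and its continuous local martingale part vanishes, giving $\alpha_t = 0$. A technical point: the drift integrand $\lambda$ must be replaced by a $\mathbb{G}$-predictable version (e.g.\ $\sum_k \mathbf{1}_{\{\tau_{k-1} \leq t < \tau_k\}} \lambda^k_t$) coinciding $(\mathbb{P} \otimes dt)$-a.s.\ with the given one, since the stochastic interval $(\tau_{k-1}, \tau_k]$ is only optional for the totally inaccessible times $\tau_k$.

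Step (ii) is the heart of the argument. By the defining property of the predictable compensator, it suffices to show that for every bounded $\mathbb{G}$-predictable $W(\omega, s, \xi)$ with $W(\omega, s, 0) = 0$, the process
\[
(W * (\mu^X - \nu^X))_t = \sum_{k:\, \tau_k \leq t} W(\cdot, \tau_k, J_k) - \int_0^t \int_{\mathbb{R}} W(\cdot, s, \xi)\, \lambda_s\, \nu(d\xi)\, ds
\]
is a $\mathbb{G}$-local martingale. A functional monotone class argument reduces this to $W(\omega, s, \xi) = Z_s(\omega) h(\xi)$ for bounded $\mathbb{G}$-predictable $Z$ and bounded Borel $h$ with $h(0) = 0$. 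For such $h$, Remark~\ref{rem:cox_h(J)s} yields that $M^h_t = \sum_{k:\, \tau_k \leq t} h(J_k) - \mathbb{E}[h(J_1)] \int_0^t \lambda_s\,ds$ is a true $\mathbb{G}$-martingale, so $Z \cdot M^h$ is a $\mathbb{G}$-local martingale, and a direct computation shows $Z \cdot M^h = W * (\mu^X - \nu^X)$.

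Step (iii): given (i) and (ii), the canonical decomposition reads
\[
X_t = B_t + \int_0^t \int h(\xi)(\mu^X - \nu^X)(ds, d\xi) + \sum_{s \leq t}\bigl(\Delta X_s - h(\Delta X_s)\bigr)
\]
with $B$ predictable of finite variation. The last two terms simplify to $X_t - \mathbb{E}[h(J_1)] \int_0^t \lambda_s\,ds$, forcing $B_t = \mathbb{E}[h(J_1)] \int_0^t \lambda_s\,ds$ and hence $\beta_t = \mathbb{E}[h(J_1)]\, \lambda_t$ up to a $(\mathbb{P} \otimes dt)$-null set. The main obstacle I expect is the careful bookkeeping around predictability in step (i) and the monotone class / stochastic integration extension in step (ii); both are standard, but the interplay with totally inaccessible jump times requires attention.
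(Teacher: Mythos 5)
Your proposal is correct and follows essentially the same route as the paper: both establish $\alpha = 0$ from the finite-variation structure of $X$, identify $\kappa_t(d\xi) = \lambda_t\,\nu(d\xi)$ by verifying the compensator property of $\lambda_s\,\nu(d\xi)\,ds$ via a monotone-class reduction that ultimately invokes the martingale $M^h$ from Remark~\ref{rem:cox_h(J)s}, and read off $\beta_t = \mathbb{E}[h(J_1)]\lambda_t$ from the resulting canonical decomposition (the paper presents $\beta$ first and $\kappa$ last, but the substance is identical; the paper reduces to $W = \bm{1}_{A\times(s,t]\times B}$ while you reduce to $W = Z_s h(\xi)$, which amounts to the same monotone-class step).

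One small correction to Step~(i): the indicator $\bm{1}_{\{\tau_{k-1} < t \le \tau_k\}}$ is already $\mathbb{G}$-predictable, since it equals $\bm{1}_{\{t \le \tau_k\}} - \bm{1}_{\{t \le \tau_{k-1}\}}$ and each term is left-continuous in $t$ and $\mathbb{G}$-adapted; hence the given $\lambda_t$ is predictable as stated and no modification is needed. Your proposed replacement $\bm{1}_{\{\tau_{k-1} \le t < \tau_k\}}$ is in fact the right-continuous (optional, generally \emph{non}-predictable) version for totally inaccessible $\tau_{k-1}, \tau_k$ --- the opposite of what you intended. Since the two versions agree $(\mathbb{P}\otimes dt)$-a.e., this slip does not affect the validity of the argument, but the stated reason is backwards.
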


\begin{proof}
	First, to obtain $\beta$, we truncate the jumps of $X$ using the truncation function $h$. As was discussed in Remark~\ref{rem:cox_h(J)s} (using the notation therein), we can decompose the special semimartingale $X^h$ into
	\begin{equation*}
		X^h_t = \int_0^t \mathbb{E}[h(J_1)] \lambda_s \,ds + M^h_t,
	\end{equation*}
	where $M^h$ is a martingale. This proves $\beta_t = \mathbb{E}[h(J_1)] \lambda_t$.
	
	Next, since $X$ is a finite variation process, its continuous local martingale part is $0$, leading to $\alpha = 0$.
	
	Lastly, we show the jump measure $\mu^X$ of $X$ has compensator $\lambda_t \nu(d\xi) dt$. According to \cite{MR1943877}, Theorem~II.1.8 and the monotone convergence theorem, it suffices to show
	\begin{equation}\label{eq:compen_meas}
		\mathbb{E}\biggl[\int_{[0, T] \times \mathbb{R}} W(u, \xi) \,\mu^X(du, d\xi)\biggr]
		= \mathbb{E}\biggl[\int_0^T \int_{\mathbb{R}} W(u, \xi) \lambda_u \,\nu(d\xi) \,du\biggr],
	\end{equation}
	for all $T > 0$ and $W: \Omega \times \mathbb{R}_+ \times \mathbb{R} \to \mathbb{R}_+$ measurable with respect to $\mathcal{P} \otimes \mathcal{B}(\mathbb{R})$, where $\mathcal{P}$ is the predictable $\sigma$-algebra on $\Omega \times \mathbb{R}_+$ with filtration $\mathbb{G} = (\mathcal{G}_t)_{t \geq 0}$. It is easy to check that \eqref{eq:compen_meas} holds for all $W$ of the forms:
	\begin{enumerate}[label=(\roman*), nosep]
		\item $\bm{1}_{A \times \{0\} \times B}$ with $A \in \mathcal{G}_0$ and $B \in \mathcal{B}(\mathbb{R})$,
		
		\item $\bm{1}_{A \times (s, t] \times B}$ with $A \in \mathcal{G}_s$, $0 \leq s < t$ and $B \in \mathcal{B}(\mathbb{R})$.
	\end{enumerate}
	Indeed, for case (ii), \eqref{eq:compen_meas} reads $\mathbb{E}[\bm{1}_A (X^{\bm{1}_B}_{t \land T} - X^{\bm{1}_B}_{s \land T})] = \mathbb{E}[\bm{1}_A \nu(B) \int_{s \land T}^{t \land T} \lambda_u \,du]$, which is equivalent to $\mathbb{E}[\bm{1}_A (M^{\bm{1}_B}_{t \land T} - M^{\bm{1}_B}_{s \land T})] = 0$, but this is simply the martingale property of $M^{\bm{1}_B}_{\cdot \land T}$. Then, Dynkin's $\pi$-$\lambda$ theorem tells us that \eqref{eq:compen_meas} holds for all $W$ of the form $\bm{1}_E$ with $E \in \mathcal{P} \otimes \mathcal{B}(\mathbb{R})$, and a standard approximation argument completes the proof. Thus, we have $\kappa_t(d\xi) = \lambda_t \nu(d\xi)$.
\end{proof}

The converse direction of Theorem~\ref{thm:cox2charac} is true if the intensity $\lambda$ is a deterministic function of time and the process $X$ itself.

\begin{theorem}\label{thm:charac2cox}
	Let $(\Omega, \mathcal{F}, \mathbb{G}, \mathbb{P})$ be a filtered probability space, where $\mathbb{G} = (\mathcal{G}_t)_{t \geq 0}$ satisfies the usual conditions. Let $\nu$ be a probability measure on $\mathbb{R}$ with $\nu(\{0\}) = 0$ and $\int_{\mathbb{R}} |\xi| \,\nu(d\xi) < \infty$. Let $\lambda: \mathbb{R}_+ \times \mathbb{R} \to \mathbb{R}_+$ be a measurable function, satisfying $c \leq \lambda \leq C$ for some constants $0 < c < C$. Suppose that $X$ is an It\^o semimartingale on $\Omega$ whose differential characteristics associated with a truncation function $h$ are given by
	\begin{equation*}
		\beta_t = \int_{\mathbb{R}} h(\xi) \,\nu(d\xi) \cdot \lambda(t, X_{t-}),\quad
		\alpha_t = 0,\quad
		\kappa_t(d\xi) = \lambda(t, X_{t-}) \nu(d\xi),
	\end{equation*}
	and $X_0 = 0$. Then, there exist a sequence of i.i.d.\ $\operatorname{Exp}(1)$ random variables $(E_k)_{k \in \mathbb{N}^*}$ and a sequence of i.i.d.\ $\nu$-distributed random variables $(J_k)_{k \in \mathbb{N}^*}$, with $(E_k)_{k \in \mathbb{N}^*}$ and $(J_k)_{k \in \mathbb{N}^*}$ being independent, such that $X_t = \sum_{k=1}^\infty J_k \bm{1}_{\{\tau_k \leq t\}}$, where $(\tau_k)_{k \in \mathbb{N}}$ is inductively defined by $\tau_0 \coloneqq 0$ and
	\begin{equation}\label{eq:st_sec2}
		\tau_k \coloneqq \inf\biggl\{t > \tau_{k-1}: \int_{\tau_{k-1}}^t \lambda(s, X_{\tau_{k-1}}) \,ds \geq E_k\biggr\},\quad
		k \in \mathbb{N}^*.
	\end{equation}
	In other words, $X$ is a doubly stochastic compound Poisson process with intensity $(\lambda(t, X_{t-}))_{t \geq 0}$ and jump size distribution $\nu$.
\end{theorem}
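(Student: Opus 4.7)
The plan is to identify $(E_k)$ and $(J_k)$ directly from the paths of $X$ and invoke a time-change theorem for integer-valued random measures. First, I would recognize that $X$ is a pure jump process equal to the sum of its jumps. Since $\alpha = 0$ the continuous local martingale part vanishes, and since $\beta_t = \int h\,d\nu \cdot \lambda(t, X_{t-})$ is exactly the compensator density of $\sum_{s \leq \cdot} h(\Delta X_s)$, the canonical semimartingale decomposition collapses to $X_t = \sum_{s \leq t} \Delta X_s$. Let $N_t := \mu^X([0,t] \times \mathbb{R})$. From $\lambda \leq C$ we get $\mathbb{E}[N_t] \leq Ct < \infty$, so the jump times can be enumerated as $0 =: \tau_0 < \tau_1 < \tau_2 < \cdots$; and from $\lambda \geq c > 0$ the compensator $\Lambda_t := \int_0^t \lambda(s, X_{s-})\,ds$ satisfies $\Lambda_\infty = \infty$ $\mathbb{P}$-a.s., so the standard equivalence $\{N_\infty = \infty\} = \{\Lambda_\infty = \infty\}$ $\mathbb{P}$-a.s.\ yields $\tau_k < \infty$ a.s.\ for every $k$ and $\tau_k \uparrow \infty$. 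Setting $J_k := \Delta X_{\tau_k}$ gives the representation $X_t = \sum_{k=1}^\infty J_k \mathbf{1}_{\{\tau_k \leq t\}}$.

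Next, I would set $E_k := \Lambda_{\tau_k} - \Lambda_{\tau_{k-1}}$. Because $X_{s-} = X_{\tau_{k-1}}$ on $(\tau_{k-1}, \tau_k]$, this $E_k$ coincides with $\int_{\tau_{k-1}}^{\tau_k} \lambda(s, X_{\tau_{k-1}})\,ds$, so once the correct joint law of $(E_k, J_k)_{k \geq 1}$ has been established, the recursion \eqref{eq:st_sec2} follows tautologically from the continuity and strict monotonicity of $s \mapsto \int_{\tau_{k-1}}^s \lambda(r, X_{\tau_{k-1}})\,dr$. The key observation is that the compensator of $\mu^X$ factors as $\nu(d\xi)\,d\Lambda_t$, with $\Lambda$ continuous, strictly increasing and $\Lambda_\infty = \infty$. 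Applying the standard time-change theorem for integer-valued random measures (see e.g.\ \cite{MR1943877}, Chapter~X), the image measure $\widetilde{\mu}(du, d\xi)$ of $\mu^X$ under $(t, \xi) \mapsto (\Lambda_t, \xi)$ has deterministic product compensator $du \otimes \nu(d\xi)$; by the characterization of Poisson random measures (\cite{MR1943877}, Theorem~II.4.8), $\widetilde{\mu}$ is then a Poisson random measure with intensity $du \otimes \nu(d\xi)$, whose atoms are exactly $(\Lambda_{\tau_k}, J_k)$. The product structure of the intensity immediately yields: the locations $\Lambda_{\tau_k}$ form a standard Poisson process, so $(E_k)$ are i.i.d.\ $\mathrm{Exp}(1)$; the marks $(J_k)$ are i.i.d.\ $\nu$; and the two sequences are independent.

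The main obstacle I anticipate is the time-change step: one has to keep track of the filtration in which $\widetilde{\mu}$ is Poisson, and verify that the marks are truly independent of the locations rather than merely conditionally $\nu$-distributed given them. If that route proves cumbersome, an inductive alternative is available: for each $k$, show that the conditional law of $(E_k, J_k)$ given $\mathcal{G}_{\tau_{k-1}}$ is $\mathrm{Exp}(1) \otimes \nu$. The conditional exponential law of $E_k$ can be read off from the Dol\'eans exponential $\mathbf{1}_{\{\tau_k > t\}}\exp(\Lambda_t - \Lambda_{\tau_{k-1}})$, and integrating test integrands of the form $f(\xi)\mathbf{1}_{(\tau_{k-1}, \tau_k]}(s)$ against the compensator $\nu(d\xi)\,d\Lambda_s$ shows that $J_k \sim \nu$ and is conditionally independent of $\tau_k$. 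Induction on $k$ then propagates these conditional statements into the unconditional i.i.d.\ and independence claims.
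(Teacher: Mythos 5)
Your proposal is correct, and your primary route via a time change of the jump measure is genuinely different from what the paper does. The paper proceeds inductively: it first shows $\mathbb{E}[\tau_k] < \infty$ by bounding $\mathbb{E}[\mu^X((\tau_{k-1},\tau_k]\times\mathbb{R})]$, then proves the conditional law of $(E_k, J_k)$ given $\mathcal{G}_{\tau_{k-1}}$ is $\operatorname{Exp}(1)\otimes\nu$ by computing the joint characteristic function --- using the Dol\'eans exponential $M^{k,u}=\mathscr{E}(-iuL^k)$ for $E_k$, introducing the auxiliary martingale $N^w_t = e^{iwX_t} - (\int e^{iw\xi}\nu(d\xi)-1)\int_0^t e^{iwX_{s-}}\lambda(s,X_{s-})\,ds$, forming the local martingale $M^{k,u}N^w - [M^{k,u},N^w]$, and optionally stopping it at $\tau_k$. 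Your approach is to observe that the compensator of $\mu^X$ factors as $\nu(d\xi)\,d\Lambda_t$ with $\Lambda$ continuous, strictly increasing and $\Lambda_\infty = \infty$, and then to push $\mu^X$ forward through $(t,\xi)\mapsto(\Lambda_t,\xi)$; Watanabe's criterion (Jacod--Shiryaev, Theorem~II.4.8) then certifies the image measure as a Poisson random measure with intensity $du\otimes\nu(d\xi)$, from which the i.i.d.\ structure and mutual independence of $(E_k)$ and $(J_k)$ fall out simultaneously rather than one $k$ at a time. This is a legitimate shortcut: it trades the paper's elementary but fiddly characteristic-function bookkeeping for a classical but heavier time-change machinery, and it also gives the finiteness of the $\tau_k$'s more cleanly via $\{N_\infty = \infty\} = \{\Lambda_\infty = \infty\}$ $\mathbb{P}$-a.s.\ rather than the paper's inductive $\mathbb{E}[\tau_k]<\infty$ estimate.

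Two small comments. First, the time-change theorem for marked point processes is not in Chapter~X of Jacod--Shiryaev; a more accurate reference is Ikeda--Watanabe, Theorem~II.6.2, or Br\'emaud, Chapter~II, Theorem~T16 --- the caveat you raise about tracking the time-changed filtration is exactly where the standard statements do the work, so it is worth citing a precise version. Second, your fallback inductive route is essentially the paper's, but your sketch of how to obtain the conditional independence of $J_k$ from $(E_k,\mathcal{G}_{\tau_{k-1}})$ --- integrating predictable test functions $Y g(\Lambda_s - \Lambda_{\tau_{k-1}}) f(\xi)\mathbf{1}_{(\tau_{k-1},\tau_k]}(s)$ against $\mu^X$ and its compensator, then using the change of variables $u = \Lambda_s - \Lambda_{\tau_{k-1}}$ --- is actually cleaner than the paper's $[M^{k,u},N^w]$ computation and recovers both the $\operatorname{Exp}(1)$ law (via the functional equation $\mathbb{E}[Yg(E_k)] = \mathbb{E}[Y\int_0^{E_k}g(u)\,du]$) and the factorization in one stroke.
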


\begin{proof}
	First we study the structure of the jump times of $X$. Note that $\kappa_t(d\xi) dt$ is the compensator of the jump measure $\mu^X$ of $X$. For fixed $t > 0$, applying \cite{MR1943877}, Theorem~II.1.8 with $W(s, \xi) = \bm{1}_{[0, t]}(s)$, we get
	\begin{equation*}
		\mathbb{E}[\mu^X([0, t] \times \mathbb{R})]
		= \mathbb{E}\biggl[\int_0^t \lambda(s, X_{s-}) \,ds\biggr]
		\leq Ct.
	\end{equation*}
	This implies that for $\mathbb{P}$-a.s.\ $\omega \in \Omega$, the sample path $X_\cdot(\omega)$ has finitely many jumps on any finite time interval. Thus, we can inductively define $\tau_0 \coloneqq 0$ and
	\begin{equation*}
		\tau_k \coloneqq \inf\{t > \tau_{k-1}: \Delta X_t \neq 0\},\quad
		k \in \mathbb{N}^*.
	\end{equation*}
	It follows that $(\tau_k)_{k \in \mathbb{N}}$ is a sequence of stopping times that increases to infinity $\mathbb{P}$-a.s., and exhausts the jumps of $X$. Note that on the event $\{\tau_{k-1} < \infty\}$, one has strict inequality $\tau_{k-1} < \tau_k$ $\mathbb{P}$-a.s. Moreover, by an inductive argument, assuming $\mathbb{E}[\tau_{k-1}] < \infty$ and taking $W(s, \xi) = \bm{1}_{(\tau_{k-1}, \tau_k]}(s)$ in \cite{MR1943877}, Theorem~II.1.8, we deduce that
	\begin{equation*}
		c\mathbb{E}[\tau_k - \tau_{k-1}]
		\leq \mathbb{E}\biggl[\int_{\tau_{k-1}}^{\tau_k} \lambda(s, X_{s-}) \,ds\biggr]
		= \mathbb{E}[\mu^X((\tau_{k-1}, \tau_k] \times \mathbb{R})] \leq 1,
	\end{equation*}
	so $\mathbb{E}[\tau_k] < \infty$. This shows that $(\tau_k)_{k \in \mathbb{N}}$ is a sequence of integrable (thus finite) stopping times.
	
	Next, we use the canonical representation in \cite{MR1943877}, Theorem~II.2.34 to derive
	\begin{equation*}
		X_t = \int_0^t \beta_s \,ds - \int_0^t \int_{\mathbb{R}} h(\xi) \,\kappa_s(d\xi) \,ds + \int_{[0, t] \times \mathbb{R}} \xi \,\mu^X(ds, d\xi)
		= \sum_{k=1}^\infty \Delta X_{\tau_k} \bm{1}_{\{\tau_k \leq t\}},
	\end{equation*}
	which shows that $X$ is a pure jump process. Now for $k \in \mathbb{N}^*$, define $E_k \coloneqq \int_{\tau_{k-1}}^{\tau_k} \lambda(s, X_{\tau_{k-1}}) \,ds$ and $J_k \coloneqq \Delta X_{\tau_k}$. Clearly, \eqref{eq:st_sec2} is satisfied, so it only remains to prove that $(E_k)_{k \in \mathbb{N}^*}$ is i.i.d.\ $\operatorname{Exp}(1)$, $(J_k)_{k \in \mathbb{N}^*}$ is i.i.d.\ $\nu$-distributed, and these two sequences are independent.
	
	Again, by \cite{MR1943877}, Theorem~II.1.8 with $W(s, \xi) = \bm{1}_{(\tau_{k-1}, \tau_k]}(s)$, we know that the process
	\begin{equation*}
		L^k_t \coloneqq \bm{1}_{\{\tau_k \leq t\}} - \int_{\tau_{k-1} \land t}^{\tau_k \land t} \lambda(s, X_{\tau_{k-1}}) \,ds,\quad t \geq 0,
	\end{equation*}
	is a martingale. Let $u \in \mathbb{R}$ and $i$ be the imaginary unit. We then use \cite{MR1943877}, Theorem~I.4.61 to compute the exponential martingale of $-iuL^k$:
	\begin{equation}\label{eq:M}
		M^{k, u}_t \coloneqq \mathscr{E}(-iuL^k)_t
		= (1 - iu)^{\bm{1}_{\{\tau_k \leq t\}}} \exp\biggl(iu \int_{\tau_{k-1} \land t}^{\tau_k \land t} \lambda(s, X_{\tau_{k-1}}) \,ds\biggr).
	\end{equation}
	In particular, $M^{k, u}$ is a bounded martingale, so it follows that
	\begin{equation*}
		\mathbb{E}[M^{k, u}_{\tau_k} \,|\, \mathcal{G}_{\tau_{k-1}}] = M^{k, u}_{\tau_{k-1}}
		\quad\implies\quad
		\mathbb{E}[e^{iuE_k} \,|\, \mathcal{G}_{\tau_{k-1}}] = \frac{1}{1 - iu}.
	\end{equation*}
	This implies that $E_k$ is independent of $\mathcal{G}_{\tau_{k-1}}$ and has law $\operatorname{Exp}(1)$. On the other hand, let $w \in \mathbb{R}$. According to \cite{MR1943877}, Theorem~II.2.42, the process
	\begin{equation}\label{eq:N}
		N^w_t \coloneqq e^{iwX_t} - \biggl(\int_{\mathbb{R}} e^{iw\xi} \,\nu(d\xi) - 1\biggr) \cdot \int_0^t e^{iw X_{s-}} \lambda(s, X_{s-}) \,ds,\quad t \geq 0,
	\end{equation}
	is a martingale. Thus, $M^{k, u} N^w - [M^{k, u}, N^w]$ is a local martingale, and by \cite{MR1943877}, Theorem~I.4.52 we can compute
	\begin{equation}\label{eq:[M,N]}
		[M^{k, u}, N^w]_t = \bm{1}_{\{\tau_k \leq t\}} \Delta M^{k, u}_{\tau_k} \Delta N^w_{\tau_k}
		= -iu e^{iuE_k} e^{iwX_{\tau_{k-1}}} (e^{iwJ_k} - 1) \bm{1}_{\{\tau_k \leq t\}}.
	\end{equation}
	Recall that $\mathbb{E}[\tau_k] < \infty$, so it is easy to check that $M^{k, u} N^w - [M^{k, u}, N^w]$ is a uniformly integrable martingale on $[0, \tau_k]$. This gives us
	\begin{equation}\label{eq:MN-[M,N]}
		\mathbb{E}[M^{k, u}_{\tau_k} N^w_{\tau_k} - [M^{k, u}, N^w]_{\tau_k} \,|\, \mathcal{G}_{\tau_{k-1}}]
		= M^{k, u}_{\tau_{k-1}} N^w_{\tau_{k-1}} - [M^{k, u}, N^w]_{\tau_{k-1}}.
	\end{equation}
	Plugging \eqref{eq:M}, \eqref{eq:N}, \eqref{eq:[M,N]} into \eqref{eq:MN-[M,N]}, and using the fact that $E_k \sim \operatorname{Exp}(1)$ is independent of $\mathcal{G}_{\tau_{k-1}}$, we can simplify \eqref{eq:MN-[M,N]} to
	\begin{equation*}
		\mathbb{E}[e^{iuE_k + iwJ_k} \,|\, \mathcal{G}_{\tau_{k-1}}]
		= \frac{1}{1 - iu} \int_{\mathbb{R}} e^{iw\xi} \,\nu(d\xi).
	\end{equation*}
	This implies that $(E_k, J_k)$ is independent of $\mathcal{G}_{\tau_{k-1}}$ and has law $\operatorname{Exp}(1) \otimes \nu$. Given that $(E_{k-1}, J_{k-1})$ is $\mathcal{G}_{\tau_{k-1}}$-measurable, it follows that all the $E_k$'s and $J_k$'s are independent, and the proof is complete.
\end{proof}

\section{Main Results - Counting Processes}\label{sec:3}
In this section, we present our main results on inverting the Markovian projections for counting processes. By a counting process, we are referring to a jump process $X$ of the type $X_t = \sum_{k=1}^\infty \bm{1}_{\{\tau_k \leq t\}}$, with $0 < \tau_1 < \tau_2 < \cdots$ $\mathbb{P}$-a.s. In particular, the jump size of $X$ is constant $1$. In Section~\ref{sec:4}, we will extend our setting to the case of general discrete jump sizes. However, we emphasize that the current section is not merely a special case and duplication of Section~\ref{sec:4}. In this section, we impose weaker assumptions and prove stronger results. The proof technique is different and of interest in its own right. Thus, we discuss the case of counting processes separately, which also gives a clearer illustration of how the Cox construction works.

\subsection{Existence Results}
Recall our underlying filtered probability space $(\Omega, \mathcal{F}, \mathbb{F}, \mathbb{P})$ as specified in Section~\ref{subsec:cox}. First we make the following assumptions.

\begin{assumption}\label{asm:1}
	Let $0 < L < U$ be positive constants. Assume that:
	\begin{enumerate}[label=(\roman*), nosep]
		\item $\eta$ is a predictable process on the filtered probability space $(\Omega, \mathcal{F}, \mathbb{F}, \mathbb{P})$, satisfying $L \leq \eta \leq U$ $(\mathbb{P} \otimes dt)$-a.e.
		
		\item $\lambda: \mathbb{R}_+ \times \mathbb{N} \to \mathbb{R}_+$ is a measurable function, satisfying $L \leq \lambda(\cdot, x) \leq U$ Lebesgue-a.e.\ for each $x \in \mathbb{N}$.
	\end{enumerate}
\end{assumption}

Our main result of this section is stated as follows.

\begin{theorem}\label{thm:inv_mp}
	Let Assumption~\ref{asm:1} hold. Suppose that $\Omega$ supports a sequence of i.i.d.\ $\operatorname{Exp}(1)$ random variables $(E_k)_{k \in \mathbb{N}^*}$ which is independent of $\mathcal{F}_\infty$. Then, there exist an enlarged filtration $\mathbb{G}$ (satisfying the usual conditions) and a counting process $X$ adapted to $\mathbb{G}$ such that the process
	\begin{equation}\label{eq:X_compen}
		X_t - \int_0^t \frac{\eta_s}{\mathbb{E}[\eta_s \,|\, X_{s-}]} \lambda(s, X_{s-}) \,ds,\quad t \geq 0,
	\end{equation}
	is a $\mathbb{G}$-martingale.
\end{theorem}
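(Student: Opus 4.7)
The plan is to recast the problem as a fixed-point equation for a deterministic function $\phi:\mathbb{R}_+\times\mathbb{N}\to[L,U]$ and solve it inductively in the state variable, using Theorem~\ref{thm:cox} to realize each step and a Schauder--Tychonoff argument to produce the fixed point. For any measurable $\phi$ with $L\leq\phi\leq U$, the candidate intensities $\lambda^k_s:=\eta_s\lambda(s,k-1)/\phi(s,k-1)$ are $\mathbb{F}$-predictable (hence $\mathbb{G}^{k-1}$-predictable) and bounded in $[L^2/U,U^2/L]$, so Theorem~\ref{thm:cox} applied with constant jump sizes $J_k\equiv 1$ yields a counting process $X^\phi$ whose $\mathbb{G}$-compensator is $\int_0^t\eta_s\lambda(s,X^\phi_{s-})/\phi(s,X^\phi_{s-})\,ds$. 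Choosing $\phi$ so that $\phi(s,k-1)=\mathbb{E}[\eta_s\,|\,X^\phi_{s-}=k-1]$ whenever this conditional expectation is well-defined then reduces the compensator to the one in \eqref{eq:X_compen}, and we are done.

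I build $\phi(\cdot,k-1)$ inductively in $k\in\mathbb{N}^*$. Given that $\tau_0:=0,\tau_1,\ldots,\tau_{k-1}$ have been built from previously selected $\phi(\cdot,0),\ldots,\phi(\cdot,k-2)$, the Cox survival formula (together with the fact that $E_k$ is independent of $\mathcal{G}^{k-1}_\infty\supseteq\sigma(\eta,\tau_{k-1})$, which follows from the i.i.d.\ structure of the $E_k$ and their independence from $\mathcal{F}_\infty$) gives
\begin{equation*}
\mathbb{E}[\eta_t\,|\,X^\phi_{t-}=k-1]=\frac{\mathbb{E}\bigl[\eta_t\bm{1}_{\{\tau_{k-1}<t\}}\exp\bigl(-\int_{\tau_{k-1}}^t\eta_s\lambda(s,k-1)/\phi(s,k-1)\,ds\bigr)\bigr]}{\mathbb{E}\bigl[\bm{1}_{\{\tau_{k-1}<t\}}\exp\bigl(-\int_{\tau_{k-1}}^t\eta_s\lambda(s,k-1)/\phi(s,k-1)\,ds\bigr)\bigr]}.
\end{equation*}
Setting this equal to $\phi_{k-1}(t):=\phi(t,k-1)$ yields a scalar-valued fixed-point equation; crucially, $\tau_{k-1}$ does not depend on $\phi_{k-1}$ thanks to the inductive layering. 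The right-hand side is a weighted average of values of $\eta_t\in[L,U]$ and hence automatically lies in $[L,U]$.

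To invoke Schauder--Tychonoff, I reparametrize $\psi_{k-1}:=1/\phi_{k-1}$ so the exponent becomes linear in the unknown. The set $\mathcal{K}:=\{\psi\in L^\infty(\mathbb{R}_+):1/U\leq\psi\leq 1/L\text{ a.e.}\}$ is convex and weak-$*$ compact by Banach--Alaoglu, and the induced self-map $\Psi$ on $\mathcal{K}$ is continuous in the weak-$*$ topology: for each $\omega$, the function $\bm{1}_{(\tau_{k-1}(\omega),t]}(\cdot)\eta_\cdot(\omega)\lambda(\cdot,k-1)$ lies in $L^1(\mathbb{R}_+)$, so weak-$*$ convergence $\psi^n\to\psi$ gives pointwise-in-$\omega$ convergence of the exponent; dominated convergence then transfers the limit through the exponential (bounded in $[\exp(-U^2t/L),1]$) and through the outer expectation, with the denominator staying bounded below by $\exp(-U^2t/L)\,\mathbb{P}(\tau_{k-1}<t)>0$ for $t>0$. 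Schauder--Tychonoff thus supplies a fixed point. Iterating over $k$ (and assigning any measurable $[L,U]$-valued extension of $\phi$ on the null pairs $(s,k-1)$ with $\mathbb{P}(X^\phi_{s-}=k-1)=0$, which do not enter the compensator) assembles the desired $\phi$; the bound $\lambda^k\leq U^2/L$ forces $\tau_k\uparrow\infty$ $\mathbb{P}$-a.s., so $X:=\sum_k\bm{1}_{\{\tau_k\leq\cdot\}}$ is a genuine counting process and Theorem~\ref{thm:cox} delivers \eqref{eq:X_compen}. The principal obstacle is the continuity verification for $\Psi$: the map combines a nonlinear exponential with a quotient, and the reparametrization $\psi=1/\phi$ is essential to obtain $\omega$-pointwise linear dependence on the unknown, after which the uniform bounds on $\eta$, $\lambda$, and the exponent, together with non-degeneracy of the denominator, enable dominated convergence.
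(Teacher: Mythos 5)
Your proposal follows the paper's strategy exactly in its first two stages (the Cox construction via Theorem~\ref{thm:cox}, the reduction to a state-wise fixed-point equation for $\gamma_{k-1}=\phi(\cdot,k-1)$, and the key observation that the inductive layering decouples $\tau_{k-1}$ from the unknown $\phi_{k-1}$), but diverges in the fixed-point step: the paper applies the Banach fixed point theorem on successive short time slabs $[0,\delta]$, $[\delta,2\delta]$, \dots, using the mean-value-theorem Lipschitz estimate \eqref{eq:phi_diff}, whereas you reparametrize $\psi_{k-1}=1/\phi_{k-1}$, observe that the exponent becomes linear in $\psi_{k-1}$, and invoke Schauder--Tychonoff on the weak-$*$ compact convex set $\mathcal{K}\subset L^\infty(\mathbb{R}_+)$, verifying continuity by a dominated-convergence argument. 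Your argument is sound: $\mathcal{K}$ is weak-$*$ closed (hence compact by Banach--Alaoglu) and metrizable since $L^1(\mathbb{R}_+)$ is separable, so sequential continuity of $\Psi$ suffices, and the lower bound $g_m(t;\cdot)\geq e^{-(U^2/L)t}\,\mathbb{P}(\tau_{k-1}<t)>0$ keeps the quotient under control. The reparametrization to make the unknown enter linearly is a clean device not used in the paper. The trade-off is that Schauder only gives existence of the fixed point, whereas the paper's Banach argument also gives its \emph{uniqueness}, which is precisely what is needed later in Theorem~\ref{thm:unique} to establish uniqueness in law of the inversion; with your approach that result would require a separate argument. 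It is also worth noting that the paper does use a Schauder argument in Section~\ref{sec:4} for general discrete jump sizes, but there compactness is obtained via Arzel\`a--Ascoli under an extra H\"older-in-mean hypothesis on $\eta$ (Assumption~\ref{asm:2}~(ii)); your weak-$*$ route avoids any such regularity assumption, which could be an advantage if one is willing to forgo uniqueness.
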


Being a counting process, $X$ is automatically a locally integrable increasing process, so Theorem~\ref{thm:inv_mp} tells us that $\int_0^\cdot \frac{\eta_s}{\mathbb{E}[\eta_s \,|\, X_{s-}]} \lambda(s, X_{s-}) \,ds$ is the $\mathbb{G}$-compensator of $X$. The enlargement of the filtration is necessary in general. Consider the trivial case where $\eta$ is deterministic, $\lambda \equiv 1$, and $\mathbb{F} = (\sigma(\mathcal{N}^\mathbb{P}))_{t \geq 0}$ with $\mathcal{N}^\mathbb{P}$ being the set of all $\mathbb{P}$-null sets. Then, it would be impossible to construct an $\mathbb{F}$-adapted counting process (which has to be $\mathbb{P}$-a.s.\ deterministic) whose $\mathbb{F}$-compensator is $t$. Lastly, the predictability of $\eta$ is not needed in the proof of Theorem~\ref{thm:inv_mp}. The reason we impose this assumption is that we want the integrand of the compensator to be predictable. Indeed, it suffices to let $\eta$ be progressively measurable.

Now we prove Theorem~\ref{thm:inv_mp}. The main difficulty lies in the conditional expectation term $\mathbb{E}[\eta_t \,|\, X_{t-}]$. The idea of the proof is to first replace the conditional expectation with something ``known'', use the Cox construction technique to build $X$, then re-compute the conditional expectation using $X$. This leads to a fixed point problem, which can be solved using the Banach fixed point theorem.

\begin{proof}[Proof of Theorem~\ref{thm:inv_mp}]
	We divide the proof into three steps.
	
	\medskip
	\emph{Step 1: Cox construction.}\quad We pick a sequence $(\gamma_k)_{k \in \mathbb{N}} \subset L^\infty(\mathbb{R}_+; [L, U])$, which will be determined later. Let $\tau_0 \coloneqq 0$ and inductively define the random times
	\begin{equation}\label{eq:st_sec3}
		\tau_k \coloneqq \inf\biggl\{t > \tau_{k-1}: \int_{\tau_{k-1}}^t \frac{\eta_s}{\gamma_{k-1}(s)} \lambda(s, k-1) \,ds \geq E_k\biggr\},\quad
		k \in \mathbb{N}^*.
	\end{equation}
	Let $\mathbb{A}^k$ be the natural filtration of the process $(\bm{1}_{\{\tau_k \leq t\}})_{t \geq 0}$ for $k \in \mathbb{N}^*$, and define the enlarged filtration $\mathbb{G} \coloneqq \mathbb{F} \newtriangledown (\newbigtriangledown_{k=1}^\infty \mathbb{A}^k)$. We are now in a position to apply Theorem~\ref{thm:cox} with $\lambda^k_t = \frac{\eta_t}{\gamma_{k-1}(t)} \lambda(t, k-1)$ and $J_k \equiv 1$. By Assumption~\ref{asm:1}, one has $L^2/U \leq \lambda^k \leq U^2/L$ for all $k$ $(\mathbb{P} \otimes dt)$-a.e., thus all the assumptions of Theorem~\ref{thm:cox} are satisfied. Define the counting process $X_t \coloneqq \sum_{k=1}^\infty \bm{1}_{\{\tau_k \leq t\}}$, $t \geq 0$. It follows that $X$ is finite, and the process
	\begin{equation*}
		M_t \coloneqq X_t - \int_0^t \sum_{k=1}^\infty \bm{1}_{\{\tau_{k-1} < s \leq \tau_k\}} \frac{\eta_s}{\gamma_{k-1}(s)} \lambda(s, k-1) \,ds,\quad t \geq 0,
	\end{equation*}
	is a $\mathbb{G}$-martingale. The next step is to choose a proper sequence of functions $(\gamma_k)_{k \in \mathbb{N}}$ that serves our purpose.
	
	\medskip
	\emph{Step 2: Deriving the fixed point problem.}\quad Our goal is to find a sequence $(\gamma_k)_{k \in \mathbb{N}} \subset L^\infty(\mathbb{R}_+; [L, U])$ such that for each $k \in \mathbb{N}$ and Lebesgue-a.e.\ $t \geq 0$,
	\begin{equation}\label{eq:fp0}
		\gamma_k(t) = \mathbb{E}[\eta_t \,|\, X_{t-} = k].
	\end{equation}
	Note that \eqref{eq:fp0} is a fixed point problem, as the right-hand side depends on the $\gamma_k$'s through $X$. If \eqref{eq:fp0} is true, using the fact that $\{\tau_k < t \leq \tau_{k+1}\} = \{X_{t-} = k\}$, we can rewrite
	\begin{equation*}
		\begin{split}
			M_t &= X_t - \int_0^t \sum_{k=0}^\infty \bm{1}_{\{X_{s-} = k\}} \frac{\eta_s}{\mathbb{E}[\eta_s \,|\, X_{s-} = k]} \lambda(s, k) \,ds\\
			&= X_t - \int_0^t \frac{\eta_s}{\mathbb{E}[\eta_s \,|\, X_{s-}]} \lambda(s, X_{s-}) \,ds.
		\end{split}
	\end{equation*}
	In Step 1, we already proved that $M$ is a $\mathbb{G}$-martingale, so this would finish the proof of the theorem.
	
	The remaining work is to solve the fixed point problem \eqref{eq:fp0} for $k \in \mathbb{N}$. Let us compute the right-hand side of \eqref{eq:fp0} to derive a more explicit expression in $\gamma_k$:
	\begin{equation*}
		\mathbb{E}[\eta_t \,|\, X_{t-} = k]
		= \mathbb{E}[\eta_t \,|\, \tau_k < t \leq \tau_{k+1}]
		= \frac{\mathbb{E}\bigl[\eta_t \bm{1}_{\{\tau_k < t \leq \tau_{k+1}\}}\bigr]}{\mathbb{E}\bigl[\bm{1}_{\{\tau_k < t \leq \tau_{k+1}\}}\bigr]}.
	\end{equation*}
	The numerator equals
	\begin{equation}\label{eq:numerator}
		\begin{split}
			\mathbb{E}\bigl[\eta_t \bm{1}_{\{\tau_k < t \leq \tau_{k+1}\}}\bigr]
			&= \mathbb{E}\bigl[\mathbb{E}\bigl[\eta_t \bm{1}_{\{\tau_k < t\}} \bm{1}_{\{\tau_{k+1} \geq t\}} \,\big|\, \mathcal{F}_\infty, E_1, ..., E_k\bigr]\bigr]\\
			&= \mathbb{E}\biggl[\eta_t \bm{1}_{\{\tau_k < t\}} \mathbb{P}\biggl(\int_{\tau_k}^t \frac{\eta_s}{\gamma_k(s)} \lambda(s, k) \,ds \leq E_{k+1} \,\bigg|\, \mathcal{F}_\infty, E_1, ..., E_k\biggr)\biggr]\\
			&= \mathbb{E}\biggl[\eta_t \bm{1}_{\{\tau_k < t\}} \exp\biggl(-\int_{\tau_k}^t \frac{\eta_s}{\gamma_k(s)} \lambda(s, k) \,ds\biggr)\biggr],
		\end{split}
	\end{equation}
	where in the second equality we used the fact that $\tau_k$ is $(\mathcal{F}_\infty \lor \sigma(E_1, ..., E_k))$-measurable, and in the last equality we used the independence between $E_{k+1}$ and $\mathcal{F}_\infty \lor \sigma(E_1, ..., E_k)$. The denominator can be computed in the same way. Then, \eqref{eq:fp0} reads
	\begin{equation}\label{eq:fp}
		\gamma_k(t) = \frac{\mathbb{E}\bigl[\eta_t \bm{1}_{\{\tau_k < t\}} \exp\bigl(-\int_{\tau_k}^t \frac{\eta_s}{\gamma_k(s)} \lambda(s, k) \,ds\bigr)\bigr]}{\mathbb{E}\bigl[\bm{1}_{\{\tau_k < t\}} \exp\bigl(-\int_{\tau_k}^t \frac{\eta_s}{\gamma_k(s)} \lambda(s, k) \,ds\bigr)\bigr]}.
	\end{equation}
	Now for fixed $k \in \mathbb{N}$, the function $\gamma_k$ appears explicitly on the right-hand side of \eqref{eq:fp}, while the functions $\gamma_0, ..., \gamma_{k-1}$ are implicitly encoded in the definition of $\tau_k$. The crucial point is that functions $\gamma_\ell$ with $\ell > k$ are not involved in \eqref{eq:fp}. This allows us to solve \eqref{eq:fp} by induction on $k \in \mathbb{N}$.
	
	\medskip
	\emph{Step 3: Solving the fixed point problem.}\quad We now prove that there exists a unique sequence $(\gamma_k)_{k \in \mathbb{N}} \subset L^\infty(\mathbb{R}_+; [L, U])$ that solves \eqref{eq:fp}. The proof of the base case and the induction step are the same, so we prove them together. Assume that we have already found functions $\gamma_0, ..., \gamma_{m-1}$ so that $\eqref{eq:fp}$ holds for $k = 0, ..., m-1$ (if $m=0$, this assumption is vacuous, corresponding to the base case). In other words, the joint law of $(\tau_m, \eta)$ is known, which does not depend on the choice of $\gamma_m$. We prove the existence and uniqueness of $\gamma_m$.
	
	Our proof is based on the Banach fixed point theorem. We define the map $\Phi_m: L^\infty(\mathbb{R}_+; [L, U]) \to L^\infty(\mathbb{R}_+; [L, U])$ via
	\begin{equation*}
		\Phi_m(\gamma)(t) \coloneqq \frac{f_m(t; \gamma)}{g_m(t; \gamma)},\quad
		\gamma \in L^\infty(\mathbb{R}_+; [L, U]),\, t \geq 0,
	\end{equation*}
	where
	\begin{equation*}
		\begin{split}
			f_m(t; \gamma) &\coloneqq \mathbb{E}\biggl[\eta_t \bm{1}_{\{\tau_m < t\}} \exp\biggl(-\int_{\tau_m}^t \frac{\eta_s}{\gamma(s)} \lambda(s, m) \,ds\biggr)\biggr],\\
			g_m(t; \gamma) &\coloneqq \mathbb{E}\biggl[\bm{1}_{\{\tau_m < t\}} \exp\biggl(-\int_{\tau_m}^t \frac{\eta_s}{\gamma(s)} \lambda(s, m) \,ds\biggr)\biggr].
		\end{split}
	\end{equation*}
	We want to show that $\Phi_m$ has a unique fixed point. Since $L^\infty(\mathbb{R}_+; [L, U])$ is a closed subset of $L^\infty(\mathbb{R}_+; \mathbb{R})$, we know it is a complete metric space. However, the problem is that $\Phi_m$ is not a contraction map with respect to $\lVert \cdot \rVert_\infty \coloneqq \lVert \cdot \rVert_{L^\infty(\mathbb{R}_+)}$, so we need some more work.
	
	Fix a finite time horizon $T > 0$. With a little abuse of notation, we instead prove that the map $\Phi_m: L^\infty([0, T]; [L, U]) \to L^\infty([0, T]; [L, U])$ has a unique fixed point. Let $\gamma, \widetilde{\gamma} \in L^\infty([0, T]; [L, U])$. By the mean value theorem, we have the following estimate:
	\begin{equation}\label{eq:fg_diff}
		\begin{split}
			&\max\biggl\{\frac{1}{U} |f_m(t; \gamma) - f_m(t; \widetilde{\gamma})|, |g_m(t; \gamma) - g_m(t; \widetilde{\gamma})|\biggr\}\\
			&\quad\quad\leq \mathbb{E}\biggl[\bm{1}_{\{\tau_m < t\}} \biggl|\exp\biggl(-\int_{\tau_m}^t \frac{\eta_s}{\gamma(s)} \lambda(s, m) \,ds\biggr) - \exp\biggl(-\int_{\tau_m}^t \frac{\eta_s}{\widetilde{\gamma}(s)} \lambda(s, m) \,ds\biggr)\biggr|\biggr]\\
			&\quad\quad\leq \mathbb{E}\biggl[\bm{1}_{\{\tau_m < t\}} \int_{\tau_m}^t \eta_s \lambda(s, m) \biggl|\frac{1}{\gamma(s)} - \frac{1}{\widetilde{\gamma}(s)}\biggr| \,ds\biggr]\\
			&\quad\quad\leq \frac{U^2}{L^2} \mathbb{P}(\tau_m < t) \int_0^t |\gamma(s) - \widetilde{\gamma}(s)| \,ds.
		\end{split}
	\end{equation}
	We also note that
	\begin{equation}\label{eq:g_lb}
		g_m(t; \cdot) \geq e^{-(U^2/L) t} \mathbb{P}(\tau_m < t).
	\end{equation}
	Thus, using \eqref{eq:fg_diff} and \eqref{eq:g_lb}, we have that for $t \in [0, T]$,
	\begin{equation}\label{eq:phi_diff}
		\begin{split}
			&|\Phi_m(\gamma)(t) - \Phi_m(\widetilde{\gamma})(t)|\\
			&\quad\quad\leq \frac{g_m(t; \widetilde{\gamma}) |f_m(t; \gamma) - f_m(t; \widetilde{\gamma})| + f_m(t; \widetilde{\gamma}) |g_m(t; \gamma) - g_m(t; \widetilde{\gamma})|}{g_m(t; \gamma) g_m(t; \widetilde{\gamma})}\\
			&\quad\quad\leq \frac{|f_m(t; \gamma) - f_m(t; \widetilde{\gamma})| + U|g_m(t; \gamma) - g_m(t; \widetilde{\gamma})|}{g_m(t; \gamma)}\\
			&\quad\quad\leq \frac{2U^3 e^{(U^2/L) T}}{L^2} \int_0^t |\gamma(s) - \widetilde{\gamma}(s)| \,ds.
		\end{split}
	\end{equation}
	In general, the map $\Phi_m: L^\infty([0, T]; [L, U]) \to L^\infty([0, T]; [L, U])$ is still not a contraction with respect to $\lVert \cdot \rVert_{\infty, T} \coloneqq \lVert \cdot \rVert_{L^\infty([0, T])}$. What we will do next is to introduce an equivalent norm for which $\Phi_m$ is a contraction map.
	
	Let $a > 0$, and define $\lVert f \rVert_{\infty, T, a} \coloneqq \lVert e^{-a \cdot} f(\cdot) \rVert_{\infty, T}$ for $f \in L^\infty([0, T]; \mathbb{R})$. It is easy to check that $\lVert \cdot \rVert_{\infty, T, a}$ is a norm on $L^\infty([0, T]; \mathbb{R})$ which is equivalent to $\lVert \cdot \rVert_{\infty, T}$. Moreover, $L^\infty([0, T]; [L, U])$ is still a complete metric space with respect to $\lVert \cdot \rVert_{\infty, T, a}$. Then, for $\gamma, \widetilde{\gamma} \in L^\infty([0, T]; [L, U])$ and $t \in [0, T]$, we use \eqref{eq:phi_diff} to estimate
	\begin{equation*}
		\begin{split}
			&|e^{-at} \Phi_m(\gamma)(t) - e^{-at} \Phi_m(\widetilde{\gamma})(t)|
			\leq C(T) e^{-at} \int_0^t e^{as} |e^{-as} \gamma(s) - e^{-as} \widetilde{\gamma}(s)| \,ds\\
			&\quad\quad\quad\quad\leq C(T) \lVert \gamma - \widetilde{\gamma} \rVert_{\infty, t, a} \int_0^t e^{-a(t-s)} \,ds
			= \frac{C(T) (1 - e^{-at})}{a} \lVert \gamma - \widetilde{\gamma} \rVert_{\infty, t, a},
		\end{split}
	\end{equation*}
	where $C(T) > 0$ is some constant only depending on $T$ (and the universal constants $L$, $U$). This implies
	\begin{equation*}
		\lVert \Phi_m(\gamma) - \Phi_m(\widetilde{\gamma}) \rVert_{\infty, T, a}
		\leq \frac{C(T)}{a} \lVert \gamma - \widetilde{\gamma} \rVert_{\infty, T, a}.
	\end{equation*}
	By choosing $a = 2C(T)$, we get that the map $\Phi_m: L^\infty([0, T]; [L, U]) \to L^\infty([0, T]; [L, U])$ is a contraction with respect to $\lVert \cdot \rVert_{\infty, T, a}$. The Banach fixed point theorem then yields the existence and uniqueness of the fixed point $\gamma^*$ (which is a function defined on $[0, T]$).
	
	So far we have proved the map $\Phi_m: L^\infty([0, T]; [L, U]) \to L^\infty([0, T]; [L, U])$ admits a unique fixed point $\gamma^*$. By the uniqueness property and the arbitrariness of $T > 0$, this suffices to show the existence and uniqueness of $\gamma_m = \gamma^*$ that satisfies \eqref{eq:fp} on $\mathbb{R}_+$.
\end{proof}

As a consequence of the Markovian projection results in \cite{MR4814246}, we have the following corollary.

\begin{corollary}\label{cor:mp}
	Let $X$ be the counting process in Theorem~\ref{thm:inv_mp}. Then, there exists a doubly stochastic Poisson process $\widehat{X}$ with intensity $(\lambda(t, \widehat{X}_{t-}))_{t \geq 0}$ such that for every $t \geq 0$, the law of $\widehat{X}_t$ agrees with the law of $X_t$.
\end{corollary}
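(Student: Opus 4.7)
The plan is to read off the $\mathbb{G}$-differential characteristics of the counting process $X$ from Theorem~\ref{thm:inv_mp} and feed them into the Markovian projection theorem of \cite{MR4814246}. Since $X$ is pure jump with every jump of size $1$ and \eqref{eq:X_compen} is a $\mathbb{G}$-martingale, an argument essentially identical to the one used in Theorem~\ref{thm:cox2charac} identifies the differential characteristics of $X$ (associated with a truncation function $h$) as
\begin{equation*}
\beta_t = h(1)\,\mu_t,\quad \alpha_t = 0,\quad \kappa_t(d\xi) = \mu_t\,\delta_1(d\xi),\qquad \mu_t \coloneqq \frac{\eta_t}{\mathbb{E}[\eta_t \mid X_{t-}]} \lambda(t, X_{t-}).
\end{equation*}

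Next, I would invoke \cite{MR4814246}, Theorem~3.2, which produces measurable $b, a$ and a L\'evy transition kernel $k$ satisfying \eqref{eq:mp_condexp1}--\eqref{eq:mp_condexp3}, together with an It\^o semimartingale $\widehat{X}$ having Markovian characteristics $(b(t, \widehat{X}_{t-}), a(t, \widehat{X}_{t-}), k(t, \widehat{X}_{t-}, d\xi))$ whose one-dimensional marginal laws agree with those of $X$. The required integrability and growth hypotheses are trivial to verify here: Assumption~\ref{asm:1} forces $L^2/U \leq \mu_t \leq U^2/L$, so $|\beta|$ is uniformly bounded, $\alpha \equiv 0$, and the total mass and absolute first moment of $\kappa_t$ are uniformly bounded. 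The key computation is that, by the tower property, the $\eta$-factor cancels when conditioning on $X_{t-}$:
\begin{equation*}
\mathbb{E}[\mu_t \mid X_{t-} = x] = \lambda(t, x) \cdot \frac{\mathbb{E}[\eta_t \mid X_{t-} = x]}{\mathbb{E}[\eta_t \mid X_{t-} = x]} = \lambda(t, x),
\end{equation*}
so one reads off $b(t, x) = h(1)\lambda(t, x)$, $a(t, x) = 0$, and $k(t, x, d\xi) = \lambda(t, x)\,\delta_1(d\xi)$.

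Finally, I would apply Theorem~\ref{thm:charac2cox} with $\nu = \delta_1$ (which clearly satisfies $\nu(\{0\}) = 0$ and $\int |\xi|\,\nu(d\xi) = 1 < \infty$) to the mimicking process $\widehat{X}$ on its carrying filtered probability space; the hypothesis $L \leq \lambda \leq U$ on the intensity function is inherited directly from Assumption~\ref{asm:1}. The conclusion is that $\widehat{X}$ is a doubly stochastic compound Poisson process with intensity $\lambda(t, \widehat{X}_{t-})$ and jump size distribution $\delta_1$, i.e.\ exactly a doubly stochastic Poisson process with intensity $\lambda(t, \widehat{X}_{t-})$, whose time-$t$ marginal coincides with that of $X_t$ for every $t \geq 0$. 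No step here is a genuine obstacle; the only real care is in bookkeeping the differential characteristics and in checking that the cancellation of the $\eta$-factor makes the Markovian kernel $k$ independent of the randomness in $\eta$.
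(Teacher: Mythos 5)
Your proposal is correct and follows essentially the same route as the paper: read off the $\mathbb{G}$-characteristics of $X$ via Theorem~\ref{thm:cox2charac}, project by conditioning on $X_{t-}$ (the $\eta$-factor cancels), invoke Theorem~3.2 of \cite{MR4814246}, and finish with Theorem~\ref{thm:charac2cox}. The only cosmetic difference is that the paper chooses the truncation function $h(x)=x\bm{1}_{\{|x|\leq r\}}$ with $r<1$ so that $h(1)=0$ and hence $\beta\equiv 0$, whereas you keep a general $h$ and carry the $h(1)$ factor through; both are valid.
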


\begin{proof}
	We use \cite{MR4814246}, Theorem~3.2. Let $h(x) = x \bm{1}_{\{|x| \leq r\}}$, $x \in \mathbb{R}$, be a truncation function, where $0 < r < 1$. By Theorem~\ref{thm:cox2charac}, the differential characteristics of $X$ associated with $h$ are given by
	\begin{equation*}
		\beta_t = 0,\quad
		\alpha_t = 0,\quad
		\kappa_t(d\xi) = \frac{\eta_t}{\mathbb{E}[\eta_t \,|\, X_{t-}]} \lambda(t, X_{t-}) \delta_1(d\xi).
	\end{equation*}
	Taking conditional expectations $\mathbb{E}[\cdot \,|\, X_{t-}]$, we see that
	\begin{equation*}
		b(t, x) = 0,\quad
		a(t, x) = 0,\quad
		k(t, x, d\xi) = \lambda(t, x) \delta_1(d\xi)
	\end{equation*}
	satisfy (3.3) in \cite{MR4814246}. Clearly, assumptions (3.2) and (3.4) in \cite{MR4814246} are justified. Thus, it follows that there exists an It\^o semimartingale $\widehat{X}$ with differential characteristics $b(t, \widehat{X}_{t-})$, $a(t, \widehat{X}_{t-})$ and $k(t, \widehat{X}_{t-}, d\xi)$ associated with $h$ such that for every $t \geq 0$, the law of $\widehat{X}_t$ agrees with the law of $X_t$. In particular, by Theorem~\ref{thm:charac2cox}, $\widehat{X}$ is a doubly stochastic Poisson process with intensity $(\lambda(t, \widehat{X}_{t-}))_{t \geq 0}$.
\end{proof}

\subsection{Uniqueness Results}
In Theorem~\ref{thm:inv_mp}, we proved the existence of the inverted Markovian projections for counting processes using the Cox construction. A natural question to ask is whether there are other ways to construct the inversion. In other words, is the inversion unique, say in law? More precisely, let $X_t = \sum_{k=1}^\infty \bm{1}_{\{\tau_k \leq t\}}$, $t \geq 0$, be a counting process, with $0 < \tau_1 < \tau_2 < \cdots$ $\mathbb{P}$-a.s. Let $\mathbb{G}$ be a filtration with $\mathbb{F} \subseteq \mathbb{G}$, such that $X$ is adapted to $\mathbb{G}$ and the process in \eqref{eq:X_compen} is a $\mathbb{G}$-martingale. We are interested in whether such an $X$ is unique in law.

We may even consider a more stringent concept of uniqueness in law, which arises from a ``weak formulation''. Let us say we are given a function $\lambda$ satisfying Assumption~\ref{asm:1}~(ii) and a Borel probability measure $\mu$ on $L^\infty(\mathbb{R}_+; [L, U])$ equipped with the weak-* topology.\footnote{The space $L^\infty(\mathbb{R}_+; [L, U])$ is weakly-* compact and weakly-* metrizable by the Banach--Alaoglu theorem. Thus, it is a Polish space relative to the weak-* topology.} We say $X^*$ is a weak solution to the LSI model, if there exists a filtered probability space $(\Omega^*, \mathcal{F}^*, \mathbb{F}^*, \mathbb{P}^*)$, satisfying the usual conditions, that supports a predictable process $\eta^*$ with law $\mu$ and a counting process $X^*$ such that the process in \eqref{eq:X_compen} with $(\eta, X)$ replaced by $(\eta^*, X^*)$ is an $\mathbb{F}^*$-martingale. One can then ask about the uniqueness in law of such $X^*$. Note that our existence result in Theorem~\ref{thm:inv_mp} is stronger than this notion of ``weak existence''.

Recall in the proof of Theorem~\ref{thm:inv_mp}, we used the Banach fixed point theorem to show the existence and uniqueness of the fixed point solution to \eqref{eq:fp0}. From this, the uniqueness in law of the inversion $X$ seems promising. Unfortunately, we do not have uniqueness in law in general, in both senses discussed above. See the example below.

\begin{example}\label{eg:counter}
	Let $E_1 \sim \operatorname{Exp}(1)$, and define $\eta_t \coloneqq 1 + \bm{1}_{\{E_1 < t\}}$, $t \geq 0$. Let $\mathbb{F} = (\mathcal{F}_t)_{t \geq 0}$ be the natural filtration of $\eta$ (enlarged by $\mathcal{N}^{\mathbb{P}}$ and right-continuous regularized). Then, $\eta$ is predictable with respect to $\mathbb{F}$, satisfying $1 \leq \eta \leq 2$. Finally, let $\lambda(\cdot, \cdot) \equiv 1$. We will construct two different inversions of the Markovian projection, with different laws.
	
	Our first construction is of the Cox type. That is to say, by introducing a sequence of i.i.d.\ $\operatorname{Exp}(1)$ random variables $(E^*_k)_{k \in \mathbb{N}^*}$ which is independent of $\mathcal{F}_\infty$, we follow the proof of Theorem~\ref{thm:inv_mp} to construct an enlarged filtration $\mathbb{G}$ and a counting process $X_t = \sum_{k=1}^\infty \bm{1}_{\{\tau_k \leq t\}}$, $t \geq 0$, adapted to $\mathbb{G}$ such that the process in \eqref{eq:X_compen} is a $\mathbb{G}$-martingale. In particular, each $\tau_k$ satisfies \eqref{eq:st_sec3} with $E_k$ replaced by $E^*_k$, for some deterministic function $\gamma_{k-1}$.
	
	On the other hand, take a sequence of i.i.d.\ $\operatorname{Exp}(1)$ random variables $(E_k)_{k=2}^\infty$ which is independent of $E_1$. Let $\sigma_k \coloneqq \sum_{i=1}^k E_i$ for $k \in \mathbb{N}^*$, and define the counting process $\widetilde{X}_t \coloneqq \sum_{k=1}^\infty \bm{1}_{\{\sigma_k \leq t\}}$, $t \geq 0$. Let $\widetilde{\mathbb{G}}$ be the natural filtration of $\widetilde{X}$ (enlarged by $\mathcal{N}^{\mathbb{P}}$ and right-continuous regularized). Then, $\widetilde{X}$ is a $\widetilde{\mathbb{G}}$-Poisson process with intensity $1$. We claim that the process in \eqref{eq:X_compen} with $X$ replaced by $\widetilde{X}$ is a $\widetilde{\mathbb{G}}$-martingale. Indeed, from the fact that $\{\eta_t = 1\} = \{\widetilde{X}_{t-} = 0\}$, we know $\eta_t$ is $\sigma(\widetilde{X}_{t-})$-measurable. Thus, the process in \eqref{eq:X_compen} with $X$ replaced by $\widetilde{X}$ is simply $\widetilde{X}_t - t$, which is a $\widetilde{\mathbb{G}}$-martingale.
	
	The counting process $X$ from the Cox construction is clearly not a Poisson process with intensity $1$. This can be seen from the definition of $\tau_1$:
	\begin{equation*}
		\tau_1 \coloneqq \inf\biggl\{t > 0: \int_0^t \frac{\eta_s}{\gamma_0(s)} \,ds \geq E^*_1\biggr\},
	\end{equation*}
	where $\gamma_0$ is some deterministic function, and recall that $E^*_1 \sim \operatorname{Exp}(1)$ is independent of $\mathcal{F}_\infty$. Since $\eta$ is nontrivial between time $0$ and $\tau_1$, we see $\tau_1$ does not follow $\operatorname{Exp}(1)$. This shows that $X$ and $\widetilde{X}$ have different laws, so uniqueness does not hold.
\end{example}

The second construction in Example~\ref{eg:counter} seems a little bit trivial. The key problem is that the first jump time $\sigma_1$ of $\widetilde{X}$ is already a stopping time with respect to the original filtration $\mathbb{F}$, while in the Cox construction the jump times $(\tau_k)_{k \in \mathbb{N}^*}$ are driven by i.i.d.\ $\operatorname{Exp}(1)$ random variables which are also independent of $\mathcal{F}_\infty$. To rule out the cases like the second construction, we do not want the $\tau_k$'s to be $\mathbb{F}$-stopping times, i.e.\ we need some ``new'' source of randomness.

The following theorem gives a sufficient condition under which uniqueness in law holds. In other words, we have uniqueness in law within a constrained class of inversions.

\begin{theorem}\label{thm:unique}
	Let $X_t = \sum_{k=1}^\infty \bm{1}_{\{\tau_k \leq t\}}$, $t \geq 0$, be a counting process, with $0 < \tau_1 < \tau_2 < \cdots$ $\mathbb{P}$-a.s. Let $\mathbb{A}^k$ be the natural filtration of the process $(\bm{1}_{\{\tau_k \leq t\}})_{t \geq 0}$ for $k \in \mathbb{N}^*$. Let $\mathbb{G}^0 \coloneqq \mathbb{F}$, $\mathbb{G}^k \coloneqq \mathbb{F} \newtriangledown (\newbigtriangledown_{i=1}^k \mathbb{A}^i)$ for $k \in \mathbb{N}^*$ and $\mathbb{G} \coloneqq \mathbb{F} \newtriangledown (\newbigtriangledown_{i=1}^\infty \mathbb{A}^i)$. Suppose that
	\begin{enumerate}[label=(\roman*), nosep]
		\item $\mathbb{F} = \mathbb{G}^0 \hookrightarrow \cdots \hookrightarrow \mathbb{G}^{k-1} \hookrightarrow \mathbb{G}^k \hookrightarrow \cdots \hookrightarrow \mathbb{G}$;
		
		\item for each $k \in \mathbb{N}^*$, the process $Z^k_t \coloneqq \mathbb{P}(\tau_k > t \,|\, \mathcal{G}^{k-1}_t)$, $t \geq 0$, has a continuous modification which is strictly positive, where $\mathcal{G}^{k-1}_t$ is the element of $\mathbb{G}^{k-1}$ at time $t$.
	\end{enumerate}
	Moreover, let $\eta$, $\lambda$ be given by Assumption~\ref{asm:1}, and suppose that the process in \eqref{eq:X_compen} is a $\mathbb{G}$-martingale. Then, the law of $X$ is uniquely determined by $\lambda$ and the law of $\eta$.
\end{theorem}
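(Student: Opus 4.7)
The plan is to show that any inversion $X$ satisfying the hypotheses must arise from the same Cox-type construction used in the proof of Theorem~\ref{thm:inv_mp}; the Banach-fixed-point uniqueness established there then forces the joint law of $X$. Concretely, I will extract from assumptions~(i) and~(ii) an i.i.d.\ $\operatorname{Exp}(1)$ sequence $(E_k)_{k \in \mathbb{N}^*}$ driving the $\tau_k$'s, then match the resulting hazard processes against the given compensator to recover the fixed-point equation~\eqref{eq:fp}.

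Step 1 (hazard process and Cox driver). Immersion $\mathbb{G}^{k-1} \hookrightarrow \mathbb{G}^k$ implies $Z^k_t = \mathbb{P}(\tau_k > t \mid \mathcal{G}^{k-1}_\infty)$, so $Z^k$ is $\mathcal{G}^{k-1}_\infty$-measurable, non-increasing, continuous, strictly positive, with $Z^k_0 = 1$. Standard hazard-process arguments (as in Aksamit--Jeanblanc~\cite{MR3729407}, Section~2.5) then yield that $\Lambda^k := -\log Z^k$ is continuous, $\mathbb{G}^{k-1}$-predictable, non-decreasing, with $\Lambda^k_0 = 0$, and that $\bm{1}_{\{\tau_k \leq t\}} - \Lambda^k_{t \land \tau_k}$ is a $\mathbb{G}^k$-martingale (hence a $\mathbb{G}$-martingale by immersion $\mathbb{G}^k \hookrightarrow \mathbb{G}$). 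Conditioning on $\mathcal{G}^{k-1}_\infty$ and using the continuity plus strict positivity of $Z^k$, a direct calculation gives $\mathbb{P}(\Lambda^k_{\tau_k} > u \mid \mathcal{G}^{k-1}_\infty) = e^{-u}$ for $u \geq 0$ (with $\Lambda^k_\infty = \infty$ a.s.\ following from the lower bound $\eta, \lambda \geq L$ together with $\tau_{k-1} < \infty$ a.s.). Hence $E_k := \Lambda^k_{\tau_k} \sim \operatorname{Exp}(1)$ is independent of $\mathcal{G}^{k-1}_\infty$; inductively, $(E_k)_{k \in \mathbb{N}^*}$ are i.i.d.\ and jointly independent of $\mathcal{F}_\infty$.

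Step 2 (compensator identification and Cox formula). By Step~1, the $\mathbb{G}$-compensator of $\bm{1}_{\{\tau_k \leq t\}}$ is $\Lambda^k_{t \land \tau_k}$. On the other hand, the $\mathbb{G}$-compensator of $X$ given by~\eqref{eq:X_compen}, restricted to the interval $(\tau_{k-1} \land t, \tau_k \land t]$, yields another continuous predictable increasing process that makes $\bm{1}_{\{\tau_k \leq t\}}$ a local martingale after subtraction. Uniqueness of the Doob--Meyer decomposition and a telescoping argument give
\begin{equation*}
	\Lambda^k_t = \int_{\tau_{k-1}}^{t} \frac{\eta_s}{\gamma_{k-1}(s)}\, \lambda(s, k-1)\, ds, \qquad t \in [\tau_{k-1}, \tau_k],
\end{equation*}
where $\gamma_{k-1}(t) := \mathbb{E}[\eta_t \mid X_{t-} = k-1]$. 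Combining with $\tau_k = \inf\{t : \Lambda^k_t \geq E_k\}$ from Step~1 yields the Cox-type representation
\begin{equation*}
	\tau_k = \inf\biggl\{t > \tau_{k-1}: \int_{\tau_{k-1}}^{t} \frac{\eta_s}{\gamma_{k-1}(s)}\, \lambda(s, k-1)\, ds \geq E_k\biggr\}.
\end{equation*}

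Step 3 (closing via the fixed point). This representation is structurally identical to Step~1 of the proof of Theorem~\ref{thm:inv_mp}. Re-running the computation in~\eqref{eq:numerator}, which uses only the $\operatorname{Exp}(1)$ law of $E_k$ and its independence of $(\mathcal{F}_\infty, E_1, \ldots, E_{k-1})$, shows that $(\gamma_k)_{k \in \mathbb{N}}$ satisfies the fixed-point equation~\eqref{eq:fp}. By the Banach-fixed-point uniqueness of Step~3 of that proof, $(\gamma_k)_{k}$ is uniquely determined by $\lambda$ and the law of $\eta$. A final induction on $k$ using the Cox-type formula then pins down the joint law of $(\eta, \tau_1, \tau_2, \ldots)$, hence the law of $X$. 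The main obstacle is Step~1: establishing the $\operatorname{Exp}(1)$ law of $E_k$ and its independence of $\mathcal{G}^{k-1}_\infty$ relies crucially on both the immersion (which lifts the conditioning from $\mathcal{G}^{k-1}_t$ to $\mathcal{G}^{k-1}_\infty$, ensuring that conditionally on $\mathcal{G}^{k-1}_\infty$ the random time $\tau_k$ has survival function $Z^k$) and the continuity plus strict positivity of $Z^k$ (which allow inverting $\Lambda^k$ to compute the conditional distribution of $\tau_k$). Without these, $Z^k$ could fail to be $\mathcal{G}^{k-1}_\infty$-measurable or could jump, breaking the Cox-type structure.
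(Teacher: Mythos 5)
Your proposal is correct and follows essentially the same route as the paper: use conditions (i) and (ii) to extract an i.i.d.\ $\operatorname{Exp}(1)$ sequence $(E_k)$ driving the jump times, recover the Cox-type representation of $(\tau_k)$ from the assumed martingale property of \eqref{eq:X_compen}, and close by invoking the Banach-fixed-point uniqueness of $(\gamma_k)$ from the proof of Theorem~\ref{thm:inv_mp}. The one organizational difference is that the paper gets the $\operatorname{Exp}(1)$ law and $\mathcal{G}^{k-1}_\infty$-independence of $E_k = A^k_{\tau_k}$ as a black-box application of \cite{MR3204220}, Proposition~1 to the stopped martingale $M^k$, whereas you re-derive the same fact via the hazard process $\Lambda^k = -\log Z^k$, the immersion-based identity $Z^k_t = \mathbb{P}(\tau_k > t \mid \mathcal{G}^{k-1}_\infty)$, and a Doob--Meyer identification of $\Lambda^k_{\cdot\wedge\tau_k}$ with $A^k$ --- more self-contained, but the same content.
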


\begin{proof}
	For $k \in \mathbb{N}$, define the function $\gamma_k \in L^\infty(\mathbb{R}_+; [L, U])$ via
	\begin{equation*}
		\gamma_k(t) \coloneqq \mathbb{E}[\eta_t \,|\, X_{t-} = k],\quad t \geq 0.
	\end{equation*}
	Then, for Lebesgue-a.e.\ $t \geq 0$, we have $\gamma_{X_{t-}}(t) = \mathbb{E}[\eta_t \,|\, X_{t-}]$, so the process
	\begin{equation*}
		M_t \coloneqq X_t - \int_0^t \frac{\eta_s}{\gamma_{X_s-}(s)} \lambda(s, X_{s-}) \,ds,\quad t \geq 0,
	\end{equation*}
	is a $\mathbb{G}$-martingale. Now we fix $k \in \mathbb{N}^*$. By stopping $M$ at the $\mathbb{G}$-stopping times $\tau_{k-1}$ and $\tau_k$, then taking the difference, we know that the process
	\begin{equation*}
		M^k_t \coloneqq \bm{1}_{\{\tau_k \leq t\}} - \int_{\tau_{k-1} \land t}^{\tau_k \land t} \frac{\eta_s}{\gamma_{k-1}(s)} \lambda(s, k-1) \,ds,\quad t \geq 0,
	\end{equation*}
	is a $\mathbb{G}^k$-martingale. Write $A^k_t \coloneqq \int_{\tau_{k-1} \land t}^{\tau_k \land t} \frac{\eta_s}{\gamma_{k-1}(s)} \lambda(s, k-1) \,ds$ for simplicity. Applying \cite{MR3204220}, Proposition~3.1 to $M^k$, and using our assumptions on $\mathbb{G}^{k-1}$, $\mathbb{G}^k$ and $Z^k$, we conclude that $A^k_{\tau_k} \sim \operatorname{Exp}(1)$ and is independent of $\mathcal{G}^{k-1}_\infty$.
	
	Denote $E_k \coloneqq A^k_{\tau_k}$ for $k \in \mathbb{N}^*$. Note that each $E_k$ is $\mathcal{G}^k_\infty$-measurable, so the sequence $(E_k)_{k \in \mathbb{N}^*}$ is i.i.d.\ $\operatorname{Exp}(1)$ and independent of $\mathcal{F}_\infty$. On the other hand, by the definition of the $E_k$'s, it is easy to check that for all $k \in \mathbb{N}^*$,
	\begin{equation*}
		\tau_k = \inf\biggl\{t > \tau_{k-1}: \int_{\tau_{k-1}}^t \frac{\eta_s}{\gamma_{k-1}(s)} \lambda(s, k-1) \,ds \geq E_k\biggr\},
	\end{equation*}
	with $\tau_0 \coloneqq 0$. This shows that the counting process $X$ is Cox-constructed using the sequence $(E_k)_{k \in \mathbb{N}^*}$. As a consequence, given $\lambda$ and $(\gamma_k)_{k \in \mathbb{N}}$, $X$ is a deterministic function of the random objects $\eta$ and $(E_k)_{k \in \mathbb{N}^*}$. More precisely, one can write $X = F_{\lambda, \gamma}(\eta, (E_k)_{k \in \mathbb{N}^*})$ for some measurable map $F_{\lambda, \gamma}: L^\infty(\mathbb{R}_+; [L, U]) \times \mathbb{R}^\mathbb{N} \to D(\mathbb{R}_+; \mathbb{R}_+)$ depending on $\lambda$ and $(\gamma_k)_{k \in \mathbb{N}}$.\footnote{Here $L^\infty(\mathbb{R}_+; [L, U])$ is equipped with the weak-* topology, and $D(\mathbb{R}_+; \mathbb{R}_+)$ is the Skorokhod space.} To get the uniqueness in law of $X$, it suffices to show the uniqueness of $(\gamma_k)_{k \in \mathbb{N}}$. However, by the definition of the $\gamma_k$'s and following exactly the same argument as in the proof of Theorem~\ref{thm:inv_mp}, one sees that $(\gamma_k)_{k \in \mathbb{N}}$ solves \eqref{eq:fp}, which has a unique solution. That is to say, $(\gamma_k)_{k \in \mathbb{N}}$ is uniquely determined by $\lambda$ and the law of $\eta$. This finishes the proof.
\end{proof}

\begin{remark}
	The assumptions (i) and (ii) in Theorem~\ref{thm:unique} are basically a characterization of $X$ being of the ``Cox'' type. Recall the second construction $\widetilde{X}$ in Example~\ref{eg:counter} which we want to rule out. We see that $\widetilde{X}$ does not satisfy (ii) in that $Z^1_t = \bm{1}_{\{\sigma_1 > t\}}$ is not continuous.
\end{remark}

\section{Main Results - General Discrete Jump Size Distributions}\label{sec:4}
In Section~\ref{sec:3}, we inverted the Markovian projections for counting processes, whose jump size is constant $1$. In this section, we extend our results to the case of general discrete jump size distributions, i.e.\ we allow the jump size to take countably many values. To be more specific, let $(J_k)_{k \in \mathbb{N}^*}$ be a sequence of i.i.d.\ non-vanishing discrete random variables. We will construct pure jump processes of the type $X_t = \sum_{k=1}^\infty J_k \bm{1}_{\{\tau_k \leq t\}}$, $t \geq 0$, with $0 < \tau_1 < \tau_2 < \cdots$ $\mathbb{P}$-a.s., whose compensator has a conditional expectation term involving $X$ itself.

Recall the proof of Theorem~\ref{thm:inv_mp}, in which we solved the fixed point problem \eqref{eq:fp0}. In order to compute the conditional expectation $\mathbb{E}[\eta_t \,|\, X_{t-} = k]$, we used the fact that $\{X_{t-} = k\} = \{\tau_k < t \leq \tau_{k+1}\}$. In other words, $X_t = k$ if and only if $X$ has jumped exactly $k$ times before time $t$. Thus, the fixed point equation for $\gamma_k$ only involves $\gamma_0, ..., \gamma_{k-1}$ (and $\gamma_k$). This allows us to solve for the $\gamma_k$'s sequentially by induction. The key point here is the simple structure of the sample paths of $X$, which are non-decreasing step functions.

However, in the context of general jump sizes, especially when both positive and negative jump sizes are allowed, the previous proof breaks. In this case, the sample paths of $X$ are no longer monotone. To better illustrate this, let us consider the simple case where $X$ has jump sizes $\pm 1$. Then, the event $\{X_{t-} = n\}$ does not contain much information about how many times $X$ has jumped before time $t$. Indeed, for $n > 0$, we have
\begin{equation*}
	\{X_{t-} = n\} = \bigcup_{\ell = 0}^\infty \bigl(\{\tau_{n + 2\ell} < t \leq \tau_{n + 2\ell + 1}\} \cap \{(J_k)_{k=1}^{n+2\ell} \text{ has } \ell \text{ entries} =-1\}\bigr).
\end{equation*}
For a particular $\ell \geq 0$, the set $\{\tau_{n + 2\ell} < t \leq \tau_{n + 2\ell + 1}\}$ says that $X$ has $n + 2\ell$ jumps before time $t$, and the set $\{(J_k)_{k=1}^{n+2\ell} \text{ has } \ell \text{ entries} =-1\}$ further says that among these $n + 2\ell$ jumps, $n + \ell$ of them are upward and $\ell$ of them are downward. This means on the event $\{X_{t-} = n\}$, $X$ can have arbitrarily many jumps, and can hit any natural number, before time $t$. As a consequence, the fixed point equation $\gamma_n = \mathbb{E}[\eta_t \,|\, X_{t-} = n]$ involves all the other $\gamma_m$'s. Thus, we cannot solve for the $\gamma_n$'s sequentially and inductively. Instead, we need to solve for the $\gamma_n$'s simultaneously as a solution to a system of infinitely many fixed point equations.

Due to the more complicated structure of our system of fixed point equations, we will use different tools and require some regularity conditions to prove our main results. Before we do so, let us first introduce some notation.

\begin{definition}
	We use the following notation.
	\begin{enumerate}[label=(\roman*), nosep]
		\item Let $a \in \mathbb{R}^n$. Define $S_k(a) \coloneqq \sum_{i=1}^k a_i$ for $0 \leq k \leq n$. In particular, $S_0(a) = 0$ by our convention.
		
		\item Let $\varnothing \neq A \subseteq \mathbb{R}$. The collection of all finite (including empty) sums of elements of $A$ is denoted by $\operatorname{FS}(A)$, namely
		\begin{equation*}
			\operatorname{FS}(A) \coloneqq \{S_k(a): a \in A^k,\, k \in \mathbb{N}\},
		\end{equation*}
		where $A^0 \coloneqq \{0\}$ by convention.
	\end{enumerate}
\end{definition}

Intuitively speaking, suppose $X$ is a pure jump process with initial value $0$ whose jump sizes are taken from the set $A$. Then, $\operatorname{FS}(A)$ consists of the places that $X$ can end up in after any finite number of jumps occurring. A simple fact worth mentioning is that if $A$ is a countable set, so is $\operatorname{FS}(A)$. Next, we make the following assumptions.

\begin{assumption}\label{asm:2}
	Let $0 < L < U$ be positive constants. Assume that:
	\begin{enumerate}[label=(\roman*), nosep]
		\item $\nu$ is a discrete probability measure on $\mathbb{R}$, satisfying $\nu(\{0\}) = 0$ and $\int_{\mathbb{R}} |\xi| \,\nu(d\xi) < \infty$. Let $\operatorname{Atom}(\nu)$ denote the set of points $x \in \mathbb{R}$ with $\nu(\{x\}) > 0$.
		
		\item $\eta$ is a predictable process on the filtered probability space $(\Omega, \mathcal{F}, \mathbb{F}, \mathbb{P})$, satisfying $L \leq \eta \leq U$ $(\mathbb{P} \otimes dt)$-a.e. Moreover, there exist $\alpha \in (0, 1]$ and for each $T > 0$ a constant $C_T > 0$ such that
		\begin{equation*}
			\mathbb{E}[|\eta_t - \eta_s|]
			\leq C_T |t-s|^\alpha,\quad
			\forall\, s, t \in [0, T].
		\end{equation*}
		
		\item $\lambda: \mathbb{R}_+ \times \operatorname{FS}(\operatorname{Atom}(\nu)) \to \mathbb{R}_+$ is a measurable function, satisfying $L \leq \lambda(\cdot, x) \leq U$ Lebesgue-a.e.\ for each $x \in \operatorname{FS}(\operatorname{Atom}(\nu))$.
	\end{enumerate}
\end{assumption}

The measure $\nu$ will be the jump size distribution of the pure jump process $X$ (which starts from $0$), so the set of all possible values that $X$ can take is exactly $\operatorname{FS}(\operatorname{Atom}(\nu))$. On the other hand, compared to Assumption~\ref{asm:1}, here we impose an extra regularity condition on $\eta$, which is the local H\"older continuity of sample paths in the average sense. This condition is satisfied by a broad class of processes. For instance, when $\eta$ is a continuous It\^o semimartingale of the form $\eta = \int_0^\cdot \beta_s \,ds + \int_0^\cdot \sigma_s \,dB_s$ with $\mathbb{E}[|\beta_t| + |\sigma_t|^2]$ locally bounded, the Burkholder--Davis--Gundy inequality guarantees that $\eta$ satisfies Assumption~\ref{asm:2} with $\alpha = 1/2$. Now we state the main results of this section.

\begin{theorem}\label{thm:inv_mp_ex}
	Let Assumption~\ref{asm:2} hold. Suppose that $\Omega$ supports a sequence of i.i.d.\ $\operatorname{Exp}(1)$ random variables $(E_k)_{k \in \mathbb{N}^*}$ and a sequence of i.i.d.\ $\nu$-distributed random variables $(J_k)_{k \in \mathbb{N}^*}$ such that $\mathcal{F}_\infty$, $(E_k)_{k \in \mathbb{N}^*}$ and $(J_k)_{k \in \mathbb{N}^*}$ are independent. Then, there exist an enlarged filtration $\mathbb{G}$ (satisfying the usual conditions) and a $\mathbb{G}$-adapted pure jump process $X$ starting from $0$ whose jump measure has $\mathbb{G}$-compensator
	\begin{equation}\label{eq:compen}
		\frac{\eta_t}{\mathbb{E}[\eta_t \,|\, X_{t-}]} \lambda(t, X_{t-}) \nu(d\xi) dt.
	\end{equation}
\end{theorem}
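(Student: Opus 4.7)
The plan is to generalize Theorem~\ref{thm:inv_mp} by parametrizing the Cox construction through an unknown function $\gamma: \mathbb{R}_+ \times \operatorname{FS}(\operatorname{Atom}(\nu)) \to [L, U]$ and obtaining $\gamma$ as the fixed point of a suitable map $\Phi$. For any such $\gamma$, I would apply Theorem~\ref{thm:cox} with intensities $\lambda^k_t = \eta_t \lambda(t, S_{k-1}(J))/\gamma(t, S_{k-1}(J))$ to produce a pure jump process $X^\gamma_t = \sum_k J_k \bm{1}_{\{\tau_k \leq t\}}$, and Theorem~\ref{thm:cox2charac} then identifies its jump-measure $\mathbb{G}$-compensator as $\eta_t \lambda(t, X^\gamma_{t-})/\gamma(t, X^\gamma_{t-})\,\nu(d\xi)\,dt$. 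It therefore suffices to exhibit $\gamma$ satisfying $\gamma(t, x) = \mathbb{E}[\eta_t \mid X^\gamma_{t-} = x]$ for every $x \in \operatorname{FS}(\operatorname{Atom}(\nu))$ and a.e.\ $t \geq 0$.

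Conditioning on $(\mathcal{F}_\infty, E_1, \ldots, E_{k+1}, J_1, \ldots, J_k)$ and integrating out $E_{k+1}$ exactly as in \eqref{eq:numerator}, this condition becomes the explicit fixed-point equation $\gamma = \Phi(\gamma)$ with
\begin{equation*}
	\Phi(\gamma)(t, x) \coloneqq \frac{\sum_{k \geq 0} \mathbb{E}\bigl[\eta_t \bm{1}_{\{S_k(J) = x,\, \tau_k < t\}} \exp\bigl(-\int_{\tau_k}^t \eta_s \lambda(s, x)/\gamma(s, x)\, ds\bigr)\bigr]}{\sum_{k \geq 0} \mathbb{E}\bigl[\bm{1}_{\{S_k(J) = x,\, \tau_k < t\}} \exp\bigl(-\int_{\tau_k}^t \eta_s \lambda(s, x)/\gamma(s, x)\, ds\bigr)\bigr]}.
\end{equation*}
Unlike the monotone setting of Section~\ref{sec:3}, sign changes of the $J_k$'s mean that the equation for $\gamma(\cdot, x)$ couples $\gamma$ at \emph{all} values of $y$, since every $\tau_k$ is driven by $\gamma$ along the random path $S_0(J), S_1(J), \ldots$. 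We therefore cannot iterate on $x$ via Banach's theorem, and I would instead apply a Schauder--Tychonoff fixed point theorem.

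I would first construct a fixed point on a finite horizon $[0, T]$ and then extend to $\mathbb{R}_+$ by a diagonal/compactness argument. On $[0, T]$, work in the locally convex space $E = C([0, T])^{\operatorname{FS}(\operatorname{Atom}(\nu))}$ with the product topology, and let $\mathcal{K} \subset E$ consist of those $\gamma$ valued in $[L, U]$ with each slice $\gamma(\cdot, x)$ being $\alpha$-H\"older with a prescribed constant $K_x$. By Arzel\`a--Ascoli on each factor and Tychonoff, $\mathcal{K}$ is convex and compact. Continuity of $\Phi$ in the product topology follows from dominated convergence, since for $\mathbb{P}$-a.e.\ $\omega$, each $\tau_k(\omega)$ is a continuous function of the finitely many values $\gamma(\cdot, S_0(J(\omega))), \ldots, \gamma(\cdot, S_{k-1}(J(\omega)))$.

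The main obstacle will be verifying the self-mapping property $\Phi(\mathcal{K}) \subseteq \mathcal{K}$, i.e.\ the $\alpha$-H\"older regularity of $\Phi(\gamma)(\cdot, x)$ uniformly in $\gamma \in \mathcal{K}$. Assumption~\ref{asm:2}(ii) handles the $\eta_t$ factor in the numerator, while the time-increments of $\bm{1}_{\{\tau_k < t\}}$ and of the exponential each contribute Lipschitz-in-$t$ terms via the universal bound $\mathbb{P}(s \leq \tau_k < t) \leq (U^2/L)(t - s)$. The truly delicate point is producing a uniform-in-$\gamma$ strictly positive lower bound for the denominator at each fixed $x$, which then pins down an admissible constant $K_x$. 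For this I would fix any representation $x = \sum_{i=1}^m j_i$ with $j_i \in \operatorname{Atom}(\nu)$, isolate the $k = m$ summand, and use the universal bounds $L \leq \eta, \lambda \leq U$ together with $L \leq \gamma \leq U$ to bound it below by a strictly positive quantity depending only on $\nu(\{j_1\}), \ldots, \nu(\{j_m\})$, the constants $L, U$, and $T$, but \emph{not} on $\gamma$.
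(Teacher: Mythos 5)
Your overall strategy mirrors the paper's closely: parametrize the Cox construction through a family $(\gamma_x)$, derive the coupled fixed-point system (with the same observation that non-monotone sample paths force a \emph{simultaneous} solution), and invoke a Schauder--Tychonoff theorem with a compact, convex invariant set built from uniform H\"older bounds, plus a dominated-convergence proof of continuity. That is precisely the paper's Steps 1--5.

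The place where your proposal has a genuine gap is the choice of function space and the claim about the denominator's lower bound. You work in $C([0,T])$ and assert that, for each $x$, isolating the $k=m$ summand (for a fixed representation $x = j_1 + \cdots + j_m$) yields a strictly positive lower bound for the denominator ``depending only on $\nu(\{j_1\}), \ldots, \nu(\{j_m\})$, $L$, $U$, and $T$, but not on $\gamma$.'' This cannot be uniform in $t$ on $[0,T]$ when $x \neq 0$: the event $\{X_{t-} = x\}$ requires at least $m \geq 1$ jumps before time $t$, so the $k=m$ summand (and indeed the entire denominator $g_n(t;\gamma)$) tends to $0$ as $t \downarrow 0$, at rate $\sim t^{m}$. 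Consequently the H\"older modulus you extract for $\Phi(\gamma)(\cdot, x)$ via the quotient estimate (Lemma~\ref{lem:calc1}) degenerates near $t=0$, and you cannot choose a single $t$-independent $K_x$ so that $\Phi(\mathcal{K}) \subseteq \mathcal{K}$ on all of $[0,T]$. This would break both the self-mapping property and the Arzel\`a--Ascoli compactness you invoke. The paper avoids the issue by working in $C((0,\infty))$ with the topology of uniform convergence on compacta, defining the invariant set via H\"older bounds $|f|_{C^{0,\alpha}([t_0,T])} \leq \widetilde{C}(n, t_0, T)$ for every $0 < t_0 < T$ (so the constants are allowed to blow up as $t_0 \downarrow 0$), and only needs the limiting fixed point defined for $t > 0$, which suffices for the a.e.-$t$ statement in \eqref{eq:compen}. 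If you replace $C([0,T])$ by $C((0,T])$ or $C((0,\infty))$ and let $K_x$ depend on a compact subinterval $[t_0, T]$, your argument would align with the paper's. The remaining ingredients (continuity via dominated convergence, the observation that $\tau_k$ depends measurably on only finitely many slices of $\gamma$, and the uniform bounds coming from $L \leq \eta, \lambda, \gamma \leq U$) are correct and match the paper.
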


The basic idea of proving Theorem~\ref{thm:inv_mp_ex} is similar to that of Theorem~\ref{thm:inv_mp}, which is based on the Cox construction and some suitable fixed point theorem. As mentioned before, we now have a more complicated system of fixed point equations. To tackle this new difficulty, we will use the Schauder fixed point theorem as our main tool. Before we go into the proof, let us recall the following basic results from calculus.

\begin{lemma}\label{lem:calc1}
	Let $\alpha \in (0, 1]$. Suppose that $f$, $g$ are $\alpha$-H\"older continuous functions on an interval $I$, and $|f| \leq C$, $c \leq |g| \leq C$ for some constants $0 < c < C$. Then, $f/g$ is $\alpha$-H\"older continuous on $I$, and $|f/g|_{C^{0, \alpha}(I)}$ only depends on $|f|_{C^{0, \alpha}(I)}$, $|g|_{C^{0, \alpha}(I)}$, $c$ and $C$.
\end{lemma}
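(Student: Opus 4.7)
The plan is to apply the standard ``add and subtract'' trick to turn the difference quotient for $f/g$ into one that only involves the difference quotients of $f$ and $g$ separately. Writing for $s, t \in I$
\begin{equation*}
\frac{f(s)}{g(s)} - \frac{f(t)}{g(t)}
= \frac{f(s) g(t) - f(t) g(s)}{g(s) g(t)}
= \frac{(f(s) - f(t)) g(t) - f(t) (g(s) - g(t))}{g(s) g(t)},
\end{equation*}
I would estimate the numerator using the $\alpha$-H\"older bounds on $f$ and $g$ together with the sup bounds $|f| \leq C$, $|g| \leq C$, while the denominator is bounded below by $c^2$ thanks to $|g| \geq c$.

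Carrying this out yields
\begin{equation*}
\left| \frac{f(s)}{g(s)} - \frac{f(t)}{g(t)} \right|
\leq \frac{C \bigl(|f|_{C^{0, \alpha}(I)} + |g|_{C^{0, \alpha}(I)}\bigr)}{c^2} |s - t|^\alpha,
\end{equation*}
which gives both the $\alpha$-H\"older continuity of $f/g$ and the desired dependence of its H\"older seminorm only on $|f|_{C^{0, \alpha}(I)}$, $|g|_{C^{0, \alpha}(I)}$, $c$, and $C$. If the notation $|\cdot|_{C^{0, \alpha}(I)}$ stands for the full H\"older norm (seminorm plus sup norm), the sup bound on $f/g$ is immediate since $|f/g| \leq C/c$, so only constants depending on $c$ and $C$ enter.

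There is essentially no obstacle here: the lemma is a routine calculus exercise whose only subtlety is making sure the constants are tracked so that they genuinely depend solely on the four quantities listed. The ``add and subtract'' decomposition is the only idea required, and the lower bound on $|g|$ is what allows the denominator to be controlled uniformly.
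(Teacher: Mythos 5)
Your proof is correct and is the standard argument; the paper itself omits the proof of this lemma entirely, describing it as elementary, so your write-up supplies exactly the expected calculation. One small note: you should be explicit that $|\cdot|_{C^{0,\alpha}(I)}$ here denotes the H\"older seminorm in your displayed estimate (the bound $|f/g| \leq C/c$ you mention afterward handles the sup part if the full norm is meant), but this is a matter of bookkeeping rather than a gap.
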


\begin{lemma}\label{lem:calc2}
	Suppose that $(f_n)_{n \in \mathbb{N}}$, $(g_n)_{n \in \mathbb{N}}$ converge uniformly to $f$, $g$ respectively on an interval $I$, and $|f_n| \leq C$, $c \leq |g_n| \leq C$, $\forall\, n \in \mathbb{N}$, for some constants $0 < c < C$. Then, $(f_n/g_n)_{n \in \mathbb{N}}$ converges uniformly to $f/g$ on $I$.
\end{lemma}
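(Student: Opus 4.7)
The plan is a direct triangle-inequality calculation; there is no real obstacle here, as this is essentially a standard calculus fact.

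First I would observe that the uniform bounds pass to the limits: since $f_n \to f$ and $g_n \to g$ pointwise on $I$ (implied by uniform convergence), the hypotheses $|f_n| \leq C$ and $c \leq |g_n| \leq C$ give $|f| \leq C$ and $c \leq |g| \leq C$ on $I$. In particular $g$ is bounded away from zero, so the quotient $f/g$ is well-defined on $I$ and bounded.

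Next I would use the standard algebraic identity
\begin{equation*}
    \frac{f_n(x)}{g_n(x)} - \frac{f(x)}{g(x)}
    = \frac{(f_n(x) - f(x)) g(x) + f(x) (g(x) - g_n(x))}{g_n(x) g(x)},
\end{equation*}
valid for all $x \in I$. Applying the triangle inequality together with $|f(x)| \leq C$, $|g(x)| \leq C$, and $|g_n(x) g(x)| \geq c^2$, I would bound the right-hand side in absolute value by
\begin{equation*}
    \frac{C |f_n(x) - f(x)| + C |g(x) - g_n(x)|}{c^2}.
\end{equation*}

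Finally, taking suprema over $x \in I$ yields
\begin{equation*}
    \Bigl\lVert \frac{f_n}{g_n} - \frac{f}{g} \Bigr\rVert_{L^\infty(I)}
    \leq \frac{C}{c^2}\bigl(\lVert f_n - f \rVert_{L^\infty(I)} + \lVert g_n - g \rVert_{L^\infty(I)}\bigr),
\end{equation*}
and the right-hand side tends to $0$ as $n \to \infty$ by the uniform convergence of $f_n$ and $g_n$. This proves $f_n/g_n \to f/g$ uniformly on $I$ and completes the argument.
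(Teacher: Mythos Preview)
Your argument is correct; the paper itself omits the proof of this lemma as elementary, so there is nothing further to compare against.
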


The proofs of these two lemmas are elementary, so we omit them. Now we prove the theorem.

\begin{proof}[Proof of Theorem~\ref{thm:inv_mp_ex}]
	We divide the proof into five steps.
	
	\medskip
	\emph{Step 1: Cox construction.}\quad Denote $\mathcal{A} \coloneqq \operatorname{Atom}(\nu)$ for short. Since $\operatorname{FS}(\mathcal{A})$ is a countable set, we enumerate it as $(x_n)_{n \in \mathbb{N}}$. The sequence $(x_n)_{n \in \mathbb{N}}$ would be all the possible values that our target process $X$ takes. Define the function space
	\begin{equation*}
		\mathcal{C} \coloneqq \{f \in C((0, \infty)): L \leq f \leq U\},
	\end{equation*}
	and we pick a sequence $(\gamma_{x_n})_{n \in \mathbb{N}} \subset \mathcal{C}$, which will be determined later.\footnote{Here we intentionally use $(x_n)_{n \in \mathbb{N}}$, instead of $\mathbb{N}$, as the index set. As we will see in the proof, $\gamma_{x_n}(t)$ represents $\mathbb{E}[\eta_t \,|\, X_{t-} = x_n]$. It would be more convenient to label the $\gamma$'s using the level of $X$. If we use $\mathbb{N}$ as the index set, in some equations we would need to introduce a new notation for the map $x_n \mapsto n$.} Let $\tau_0 \coloneqq 0$ and inductively define the random times
	\begin{equation*}
		\tau_k \coloneqq \inf\Biggl\{t > \tau_{k-1}: \int_{\tau_{k-1}}^t \sum_{n=0}^\infty \bm{1}_{\{\sum_{i=1}^{k-1} J_i = x_n\}} \frac{\eta_s}{\gamma_{x_n}(s)} \lambda(s, x_n) \,ds \geq E_k\Biggr\},\quad k \in \mathbb{N}^*.
	\end{equation*}
	Let $\mathbb{A}^k$ be the natural filtration of the process $(J_k \bm{1}_{\{\tau_k \leq t\}})_{t \geq 0}$ for $k \in \mathbb{N}^*$. Define the enlarged filtrations $\mathbb{G}^0 \coloneqq \mathbb{F}$, $\mathbb{G}^k \coloneqq \mathbb{F} \newtriangledown (\newbigtriangledown_{i=1}^k \mathbb{A}^i)$ for $k \in \mathbb{N}^*$ and $\mathbb{G} \coloneqq \mathbb{F} \newtriangledown (\newbigtriangledown_{i=1}^\infty \mathbb{A}^i)$. We are now in a position to apply Theorem~\ref{thm:cox} with
	\begin{equation*}
		\lambda^k_t = \sum_{n=0}^\infty \bm{1}_{\{\sum_{i=1}^{k-1} J_i \bm{1}_{\{\tau_i \leq t\}} = x_n\}} \frac{\eta_t}{\gamma_{x_n}(t)} \lambda(t, x_n),
	\end{equation*}
	which is progressively measurable with respect to $\mathbb{G}^{k-1}$. Define the pure jump process $X_t \coloneqq \sum_{k=1}^\infty J_k \bm{1}_{\{\tau_k \leq t\}}$, $t \geq 0$. It follows that $X$ is well-defined, and the process
	\begin{equation*}
		M_t \coloneqq X_t - \mathbb{E}[J_1] \int_0^t \sum_{k=1}^\infty \bm{1}_{\{\tau_{k-1} < s \leq \tau_k\}} \sum_{n=0}^\infty \bm{1}_{\{\sum_{i=1}^{k-1} J_i = x_n\}} \frac{\eta_s}{\gamma_{x_n}(s)} \lambda(s, x_n) \,ds,\quad t \geq 0,
	\end{equation*}
	is a $\mathbb{G}$-martingale.
	
	\medskip
	\emph{Step 2: Deriving the fixed point problem.}\quad Our goal is to find a sequence $(\gamma_{x_n})_{n \in \mathbb{N}} \subset \mathcal{C}$ such that for each $n \in \mathbb{N}$ and $t > 0$,
	\begin{equation}\label{eq:fp0_J}
		\gamma_{x_n}(t) = \mathbb{E}[\eta_t \,|\, X_{t-} = x_n].
	\end{equation}
	If \eqref{eq:fp0_J} is true, using the fact that $X_{t-} = \sum_{i=1}^k J_i$ on the event $\{\tau_k < t \leq \tau_{k+1}\}$, we can rewrite
	\begin{equation*}
		\begin{split}
			M_t &= X_t - \mathbb{E}[J_1] \int_0^t \sum_{k=0}^\infty \bm{1}_{\{\tau_k < s \leq \tau_{k+1}\}} \sum_{n=0}^\infty \bm{1}_{\{X_{s-} = x_n\}} \frac{\eta_s}{\mathbb{E}[\eta_s \,|\, X_{s-} = x_n]} \lambda(s, x_n) \,ds\\
			&= X_t - \mathbb{E}[J_1] \int_0^t \frac{\eta_s}{\mathbb{E}[\eta_s \,|\, X_{s-}]} \lambda(s, X_{s-}) \,ds.
		\end{split}
	\end{equation*}
	In Step 1, we already proved that $M$ is a $\mathbb{G}$-martingale. Thus, by Theorem~\ref{thm:cox2charac}, the jump measure of $X$ has $\mathbb{G}$-compensator \eqref{eq:compen}, and this would finish the proof of the theorem.
	
	The remaining work is to solve the fixed point problem \eqref{eq:fp0_J} for $n \in \mathbb{N}$. Let us compute the right-hand side of \eqref{eq:fp0_J} to derive a more explicit expression in $\gamma_{x_n}$:
	\begin{equation}\label{eq:cond_exp}
		\mathbb{E}[\eta_t \,|\, X_{t-} = x_n]
		= \frac{\mathbb{E}\bigl[\eta_t \bm{1}_{\{X_{t-} = x_n\}}\bigr]}{\mathbb{E}\bigl[\bm{1}_{\{X_{t-} = x_n\}}\bigr]}.
	\end{equation}
	For $a \in \mathcal{A}^k$, $k \in \mathbb{N}$, let $\tau^a_0 \coloneqq 0$ and inductively define the random times\footnote{Recall $\mathcal{A} \coloneqq \operatorname{Atom}(\nu)$. Here we can see the reason why we label the $\gamma$'s using $(x_n)_{n \in \mathbb{N}}$ instead of $\mathbb{N}$. Otherwise, we would need to find the index of $S_{i-1}(a)$ in the sequence $(x_n)_{n \in \mathbb{N}}$.}
	\begin{equation}\label{eq:st}
		\tau^a_i \coloneqq \inf\biggl\{t > \tau^a_{i-1}: \int_{\tau^a_{i-1}}^t \frac{\eta_s}{\gamma_{S_{i-1}(a)}(s)} \lambda(s, S_{i-1}(a)) \,ds \geq E_i\biggr\},\quad i = 1, ..., k+1.
	\end{equation}
	Intuitively speaking, $\tau^a_i$ is defined as if we ``freeze'' the first $k$ jumps of $X$: $(J_1, ..., J_k) = a$. By comparing the definitions of $\tau^a_i$ and $\tau_i$, it is easy to see that they coincide on the paths where $(J_1, ..., J_k) = a$. Then for $t > 0$, we can write (up to $\mathbb{P}$-null sets)
	\begin{equation}\label{eq:set_decomp}
		\begin{split}
			\{X_{t-} = x_n\}
			&= \bigcup_{k=0}^\infty \{\tau_k < t \leq \tau_{k+1},\, J_1 + \cdots + J_k = x_n\}\\
			&= \bigcup_{k=0}^\infty \bigcup_{\substack{a \in \mathcal{A}^k \\ S_k(a) = x_n}} \{\tau_k < t \leq \tau_{k+1},\, (J_1, ..., J_k) = a\}\\
			&= \bigcup_{k=0}^\infty \bigcup_{\substack{a \in \mathcal{A}^k \\ S_k(a) = x_n}} \{\tau^a_k < t \leq \tau^a_{k+1},\, (J_1, ..., J_k) = a\}.
		\end{split}
	\end{equation}
	For $a \in \mathcal{A}$, denote $p_a \coloneqq \nu(\{a\}) = \mathbb{P}(J_1 = a)$. Using \eqref{eq:set_decomp} and the independence argument in \eqref{eq:numerator}, we can similarly compute the numerator of \eqref{eq:cond_exp}:
	\begin{equation*}
		\begin{split}
			\mathbb{E}\bigl[\eta_t \bm{1}_{\{X_{t-} = x_n\}}\bigr]
			&= \sum_{k=0}^\infty \sum_{\substack{a \in \mathcal{A}^k \\ S_k(a) = x_n}} \prod_{i=1}^k p_{a_i} \mathbb{E}\bigl[\eta_t \bm{1}_{\{\tau^a_k < t \leq \tau^a_{k+1}\}}\bigr]\\
			&= \sum_{k=0}^\infty \sum_{\substack{a \in \mathcal{A}^k \\ S_k(a) = x_n}} \prod_{i=1}^k p_{a_i} \mathbb{E}\biggl[\eta_t \bm{1}_{\{\tau^a_k < t\}} \exp\biggl(-\int_{\tau^a_k}^t \frac{\eta_s}{\gamma_{x_n}(s)} \lambda(s, x_n) \,ds\biggr)\biggr].
		\end{split}
	\end{equation*}
	The denominator of \eqref{eq:cond_exp} can be computed in the same way. Then, \eqref{eq:fp0_J} reads
	\begin{equation}\label{eq:fp_J}
		\gamma_{x_n}(t)
		= \frac{\sum_{k=0}^\infty \sum_{\substack{a \in \mathcal{A}^k \\ S_k(a) = x_n}} \prod_{i=1}^k p_{a_i} \mathbb{E}\bigl[\eta_t \bm{1}_{\{\tau^a_k < t\}} \exp\bigl(-\int_{\tau^a_k}^t \frac{\eta_s}{\gamma_{x_n}(s)} \lambda(s, x_n) \,ds\bigr)\bigr]}{\sum_{k=0}^\infty \sum_{\substack{a \in \mathcal{A}^k \\ S_k(a) = x_n}} \prod_{i=1}^k p_{a_i} \mathbb{E}\bigl[\bm{1}_{\{\tau^a_k < t\}} \exp\bigl(-\int_{\tau^a_k}^t \frac{\eta_s}{\gamma_{x_n}(s)} \lambda(s, x_n) \,ds\bigr)\bigr]}.
	\end{equation}
	Now \eqref{eq:fp_J} looks a bit more explicit in $\gamma_{x_n}$. However, note that all the random times $\tau^a_k$'s, for $k \in \mathbb{N}$ and $a \in \mathcal{A}^k$ with $S_k(a) = x_n$, appear on the right-hand side of \eqref{eq:fp_J}, which means that all the $\gamma_{x_m}$'s can possibly be involved in the equation. This is basically due to the fact that before $X$ hits $x_n$, it can hit any values in $\operatorname{FS}(\mathcal{A})$. Thus, we cannot mimic the induction proof in Theorem~\ref{thm:inv_mp}. We need to solve the system of infinitely many fixed point equations simultaneously.
	
	\medskip
	\emph{Step 3: Solving the fixed point problem.}\quad We now prove that there exists a sequence $(\gamma_{x_n})_{n \in \mathbb{N}} \subset \mathcal{C}$ that solves \eqref{eq:fp_J}. Our proof is based on the Schauder fixed point theorem, so first we need to find a proper topological vector space to work with.
	
	Consider the space $C((0, \infty))$ equipped with the topology of uniform convergence on compact intervals. We know that $C((0, \infty))$ is a metrizable topological vector space. Clearly, $\mathcal{C}$ is a convex closed subset of $C((0, \infty))$. Now we consider the countably infinite product space $C((0, \infty))^\mathbb{N}$ equipped with the product topology. Then, $C((0, \infty))^\mathbb{N}$ is again a metrizable topological vector space (see e.g.\ \cite{MR2378491}, Theorem~3.36 and Theorem~5.2), and $\mathcal{C}^\mathbb{N}$ is a convex closed subset of $C((0, \infty))^\mathbb{N}$. For a generic element $\gamma \in C((0, \infty))^\mathbb{N}$, we denote it as $\gamma = (\gamma_{x_n})_{n \in \mathbb{N}}$, i.e.\ the coordinates are indexed by $(x_n)_{n \in \mathbb{N}}$.
	
	We define the map $\Phi: \mathcal{C}^\mathbb{N} \to \mathcal{C}^\mathbb{N}$ via
	\begin{equation*}
		\Phi(\gamma)_{x_n}(t) \coloneqq \frac{f_n(t; \gamma)}{g_n(t; \gamma)},\quad
		\gamma \in \mathcal{C}^\mathbb{N},\, n \in \mathbb{N},\, t > 0,
	\end{equation*}
	where
	\begin{equation*}
		\begin{split}
			f_n(t; \gamma) &\coloneqq \sum_{k=0}^\infty \sum_{\substack{a \in \mathcal{A}^k \\ S_k(a) = x_n}} \prod_{i=1}^k p_{a_i} f_{n, k, a}(t; \gamma),\\
			f_{n, k, a}(t; \gamma) &\coloneqq \mathbb{E}\biggl[\eta_t \bm{1}_{\{\tau^a_k(\gamma) < t\}} \exp\biggl(-\int_{\tau^a_k(\gamma)}^t \frac{\eta_s}{\gamma_{x_n}(s)} \lambda(s, x_n) \,ds\biggr)\biggr],\\
			g_n(t; \gamma) &\coloneqq \sum_{k=0}^\infty \sum_{\substack{a \in \mathcal{A}^k \\ S_k(a) = x_n}} \prod_{i=1}^k p_{a_i} g_{n, k, a}(t; \gamma),\\
			g_{n, k, a}(t; \gamma) &\coloneqq \mathbb{E}\biggl[\bm{1}_{\{\tau^a_k(\gamma) < t\}} \exp\biggl(-\int_{\tau^a_k(\gamma)}^t \frac{\eta_s}{\gamma_{x_n}(s)} \lambda(s, x_n) \,ds\biggr)\biggr].\\
		\end{split}
	\end{equation*}
	Here $\tau^a_k(\gamma)$ is defined via \eqref{eq:st}, and we emphasize its dependency on $\gamma$. Note that a priori it is not clear that $\Phi(\gamma)_{x_n}$ is a continuous function. Once established, it would immediately follow that $\Phi(\mathcal{C}^\mathbb{N}) \subseteq \mathcal{C}^\mathbb{N}$ by Assumption~\ref{asm:2}~(ii). Our goal is to prove that $\Phi$ admits a fixed point, which would complete the proof of the theorem. To apply the Schauder fixed point theorem, we need to show that $\Phi$ is continuous and $\Phi(\mathcal{C}^\mathbb{N})$ is pre-compact.
	
	\medskip
	\emph{Step 4: $\Phi(\mathcal{C}^\mathbb{N}) \subseteq \mathcal{C}^\mathbb{N}$ and $\Phi(\mathcal{C}^\mathbb{N})$ is pre-compact.}\quad Fix $n \in \mathbb{N}$ and $0 < t_0 < T$. First we show that $\Phi(\gamma)_{x_n}$ is $\alpha$-H\"older continuous on the interval $[t_0, T]$, uniformly in $\gamma$. Our proof will utilize Lemma~\ref{lem:calc1}.
	
	We begin our analysis with the $f_n(t; \cdot)$ term. Let $\gamma \in \mathcal{C}^\mathbb{N}$, $k \in \mathbb{N}$ and $a \in \mathcal{A}^k$ with $S_k(a) = x_n$. Without loss of generality, let us assume $k \geq 1$ (the case $k=0$ is similar and simpler, so we will not repeat it). From the independence between $E_k$ and $\mathcal{F}_\infty \lor \sigma(E_1, ..., E_{k-1})$, we observe that the conditional probability density function of $\tau^a_k(\gamma)$ given $\mathcal{F}_\infty, E_1, ..., E_{k-1}$ is
	\begin{equation*}
		s \mapsto \bm{1}_{\{\tau^a_{k-1}(\gamma) < s\}} \frac{\eta_s \lambda(s, S_{k-1}(a))}{\gamma_{S_{k-1}(a)}(s)} \exp\biggl(-\int_{\tau^a_{k-1}(\gamma)}^s \frac{\eta_u \lambda(u, S_{k-1}(a))}{\gamma_{S_{k-1}(a)}(u)} \,du\biggr),
	\end{equation*}
	so it follows that
	\begin{equation*}
		\begin{split}
			f_{n, k, a}(t; \gamma)
			&= \mathbb{E}\biggl[\mathbb{E}\biggl[\eta_t \bm{1}_{\{\tau^a_k(\gamma) < t\}} \exp\biggl(-\int_{\tau^a_k(\gamma)}^t \frac{\eta_s}{\gamma_{x_n}(s)} \lambda(s, x_n) \,ds\biggr) \,\bigg|\, \mathcal{F}_\infty, E_1, ..., E_{k-1}\biggr]\biggr]\\
			&= \mathbb{E}\biggl[\eta_t \int_0^t \bm{1}_{\{\tau^a_{k-1}(\gamma) < s\}} \frac{\eta_s \lambda(s, S_{k-1}(a))}{\gamma_{S_{k-1}(a)}(s)}\\ &\quad\quad\quad\quad\quad\cdot \exp\biggl(-\int_{\tau^a_{k-1}(\gamma)}^s \frac{\eta_u \lambda(u, S_{k-1}(a))}{\gamma_{S_{k-1}(a)}(u)} \,du - \int_s^t \frac{\eta_u \lambda(u, x_n)}{\gamma_{x_n}(u)} \,du\biggr) \,ds\biggr].
		\end{split}
	\end{equation*}
	We may further use the conditional law of $\tau^a_{k-1}(\gamma)$ given $\mathcal{F}_\infty, E_1, ..., E_{k-2}$ to continue the computation (if $k \geq 2$).  By repeating the above type of argument for $k$ times, we can derive
	\begin{equation*}
		\begin{split}
			f_{n, k, a}(t; \gamma)
			&= \mathbb{E}\Biggl[\eta_t \int_0^t \int_0^{s_k} \cdots \int_0^{s_2} \prod_{i=1}^k \frac{\eta_{s_i} \lambda(s_i, S_{i-1}(a))}{\gamma_{S_{i-1}(a)}(s_i)}\\ &\quad\,\,\cdot \exp\Biggl(-\sum_{i=1}^k \int_{s_{i-1}}^{s_i} \frac{\eta_u \lambda(u, S_{i-1}(a))}{\gamma_{S_{i-1}(a)}(u)} \,du - \int_{s_k}^t \frac{\eta_u \lambda(u, x_n)}{\gamma_{x_n}(u)} \,du\Biggr) \,ds_1 \cdots ds_{k-1} ds_k\Biggr],
		\end{split}
	\end{equation*}
	with $s_0 \coloneqq 0$ by convention. Note that the integrand of the iterated integral above involves the time variable $t$, so we want to factor it out. Let us rewrite the exponential term in the above equation as
	\begin{equation*}
		\begin{split}
			&\exp\Biggl(-\sum_{i=1}^k \int_{s_{i-1}}^{s_i} \frac{\eta_u \lambda(u, S_{i-1}(a))}{\gamma_{S_{i-1}(a)}(u)} \,du - \biggl(\int_{s_k}^T - \int_t^T\biggr) \frac{\eta_u \lambda(u, x_n)}{\gamma_{x_n}(u)} \,du\Biggr)\\
			&\quad\quad= \exp\Biggl(-\sum_{i=1}^{k+1} \int_{s_{i-1}}^{s_i} \frac{\eta_u \lambda(u, S_{i-1}(a))}{\gamma_{S_{i-1}(a)}(u)} \,du\Biggr) \cdot \exp\biggl(\int_t^T \frac{\eta_u \lambda(u, x_n)}{\gamma_{x_n}(u)} \,du\biggr),
		\end{split}
	\end{equation*}
	using the convention $s_{k+1} \coloneqq T$ and the fact that $S_k(a) = x_n$. This leads to the expression $f_{n, k, a}(t; \gamma) = \mathbb{E}[\eta_t \theta^{n, \gamma}_t \xi^{k, a, \gamma}_t]$, where
	\begin{equation*}
		\begin{split}
			\theta^{n, \gamma}_t &\coloneqq \exp\biggl(\int_t^T \frac{\eta_u \lambda(u, x_n)}{\gamma_{x_n}(u)} \,du\biggr),\\
			\xi^{k, a, \gamma}_t &\coloneqq \int_0^t \int_0^{s_k} \cdots \int_0^{s_2} \prod_{i=1}^k \frac{\eta_{s_i} \lambda(s_i, S_{i-1}(a))}{\gamma_{S_{i-1}(a)}(s_i)}\\ &\quad\quad\quad\quad\quad\quad\quad\quad\cdot \exp\Biggl(-\sum_{i=1}^{k+1} \int_{s_{i-1}}^{s_i} \frac{\eta_u \lambda(u, S_{i-1}(a))}{\gamma_{S_{i-1}(a)}(u)} \,du\Biggr) \,ds_1 \cdots ds_{k-1} ds_k.
		\end{split}
	\end{equation*}
	Now we notice that the integrand of the iterated integral above does not depend on $t$ any more. Then, it is easy to see that for $t \in [t_0, T]$,
	\begin{equation}\label{eq:theta_xi_est}
		\begin{split}
			1 &\leq \theta^{n, \gamma}_t \leq \exp\biggl(\frac{U^2}{L}(T-t_0)\biggr),\\
			\biggl(\frac{L^2}{U}\biggr)^k \exp\biggl(-\frac{U^2}{L} T\biggr) \frac{t_0^k}{k!} &\leq \xi^{k, a, \gamma}_t \leq \biggl(\frac{U^2}{L}\biggr)^k \exp\biggl(-\frac{L^2}{U} T\biggr) \frac{T^k}{k!}.
		\end{split}
	\end{equation}
	Thus, we have that
	\begin{equation}\label{eq:f_est}
		\frac{c^k}{k!} \leq f_{n, k, a}(t; \gamma) \leq \frac{C^k}{k!},\quad
		\forall\, t \in [t_0, T],
	\end{equation}
	where $c = c(t_0, T) > 0$ and $C = C(t_0, T) > 0$ are some constants (we omit the dependency on the universal constants $L$, $U$). On the other hand, for $t_0 \leq s < t \leq T$, we have the estimates
	\begin{equation}\label{eq:theta_diff}
		|\theta^{n, \gamma}_t - \theta^{n, \gamma}_s|
		\leq \exp\biggl(\frac{U^2}{L}(T-t_0)\biggr) \int_s^t \frac{\eta_u \lambda(u, x_n)}{\gamma_{x_n}(u)} \,du
		\leq \exp\biggl(\frac{U^2}{L}(T-t_0)\biggr) \frac{U^2}{L} |t-s|,
	\end{equation}
	where the first inequality follows from the mean value theorem, and
	\begin{equation}\label{eq:xi_diff}
		\begin{split}
			|\xi^{k, a, \gamma}_t - \xi^{k, a, \gamma}_s|
			&\leq \int_s^t \int_0^{s_k} \cdots \int_0^{s_2} \biggl(\frac{U^2}{L}\biggr)^k \exp\biggl(-\frac{L^2}{U}T\biggr) \,ds_1 \cdots ds_{k-1} ds_k\\
			&\leq \biggl(\frac{U^2}{L}\biggr)^k \exp\biggl(-\frac{L^2}{U} T\biggr) \frac{T^{k-1}}{(k-1)!} |t-s|.
		\end{split}
	\end{equation}
	Thus, by \eqref{eq:theta_xi_est}, \eqref{eq:theta_diff}, \eqref{eq:xi_diff}, and Assumption~\ref{asm:2}~(ii) (note that $\alpha \in (0, 1]$), we have
	\begin{equation}\label{eq:f_diff}
		\begin{split}
			&|f_{n, k, a}(t; \gamma) - f_{n, k, a}(s; \gamma)|\\
			&\quad\quad\leq \mathbb{E}[|\eta_t - \eta_s| \theta^{n, \gamma}_t \xi^{k, a, \gamma}_t] + \mathbb{E}[\eta_s |\theta^{n, \gamma}_t - \theta^{n, \gamma}_s| \xi_t] + \mathbb{E}[\eta_s \theta_s |\xi^{k, a, \gamma}_t - \xi^{k, a, \gamma}_s|]\\
			&\quad\quad\leq \frac{C^k}{(k-1)!} |t-s|^\alpha,\quad \forall\, s, t \in [t_0, T],
		\end{split}
	\end{equation}
	where $C = C(t_0, T) > 0$ is some constant (we omit the dependency on the universal constants $L$, $U$, $\alpha$). Now using \eqref{eq:f_est}, \eqref{eq:f_diff}, and the inequality\footnote{Here the summation starts from $k=1$. This suffices because there is at most one term with $k=0$ in the definition of $f_n(t; \gamma)$.}
	\begin{equation}\label{eq:series}
		\sum_{k=1}^\infty \sum_{\substack{a \in \mathcal{A}^k \\ S_k(a) = x_n}} \prod_{i=1}^k p_{a_i} \frac{C^k}{(k-1)!}
		= \sum_{k=1}^\infty \mathbb{P}\Biggl(\sum_{i=1}^k J_i = x_n\Biggr) \frac{C^k}{(k-1)!}
		\leq \sum_{k=1}^\infty \frac{C^k}{(k-1)!}
		< \infty,
	\end{equation}
	we deduce that there exists a constant $C = C(t_0, T) > 0$ such that
	\begin{equation}\label{eq:f}
		|f_n(t; \gamma)| \leq C
		\quad\text{and}\quad
		|f_n(t; \gamma) - f_n(s; \gamma)| \leq C|t-s|^\alpha,\quad
		\forall\, s, t \in [t_0, T].
	\end{equation}
	
	For the $g_n(t; \cdot)$ term, following the same argument, one can show that \eqref{eq:f_est}, \eqref{eq:f_diff} and \eqref{eq:f} are also satisfied by $g_{n, k, a}$ and $g_n$ respectively. Moreover, since $x_n \in \operatorname{FS}(\mathcal{A})$, there exist $k_n \in \mathbb{N}$ and $a_n \in \mathcal{A}^{k_n}$ such that $S_{k_n}(a_n) = x_n$. This implies that for $t \in [t_0, T]$,
	\begin{equation*}
		g_n(t; \gamma) \geq \prod_{i=1}^{k_n} p_{(a_n)_i} g_{n, k_n, a_n}(t; \gamma)
		\geq \prod_{i=1}^{k_n} p_{(a_n)_i} \frac{c(t_0, T)^{k_n}}{k_n!}
		\eqqcolon c(n, t_0, T) > 0.
	\end{equation*}
	To sum up, there exist constants $c = c(n, t_0, T)$ and $C = C(t_0, T)$ such that
	\begin{equation}\label{eq:g}
		c \leq |g_n(t; \gamma)| \leq C
		\quad\text{and}\quad
		|g_n(t; \gamma) - g_n(s; \gamma)| \leq C|t-s|^\alpha,\quad
		\forall\, s, t \in [t_0, T].
	\end{equation}
	
	Therefore, combining \eqref{eq:f}, \eqref{eq:g} and Lemma~\ref{lem:calc1}, we conclude that there exists a constant $\widetilde{C}(n, t_0, T) > 0$ such that
	\begin{equation*}
		|\Phi(\gamma)_{x_n}(t) - \Phi(\gamma)_{x_n}(s)|
		\leq \widetilde{C}(n, t_0, T) |t-s|^\alpha,\quad
		\forall\, s, t \in [t_0, T].
	\end{equation*}
	This proves our claim that $\Phi(\gamma)_{x_n}$ is $\alpha$-H\"older continuous on the interval $[t_0, T]$, uniformly in $\gamma$. Now we define, for $n \in \mathbb{N}$,
	\begin{equation*}
		\mathcal{K}_{x_n}
		\coloneqq \{f \in \mathcal{C}: |f|_{C^{0, \alpha}([t_0, T])} \leq \widetilde{C}(n, t_0, T),\, \forall\, 0 < t_0 < T\}.
	\end{equation*}
	What we have proved so far are the following set inclusions:
	\begin{equation*}
		\Phi(\mathcal{C}^\mathbb{N})
		\subseteq \prod_{n \in \mathbb{N}} \mathcal{K}_{x_n}
		\subseteq \mathcal{C}^\mathbb{N}.
	\end{equation*}
	By the Arzel\`a--Ascoli theorem and a standard diagonalization argument, it is easy to see that each $\mathcal{K}_{x_n}$ is pre-compact in $C((0, \infty))$. Then by Tychonoff's theorem, $\prod_{n \in \mathbb{N}} \mathcal{K}_{x_n}$ is pre-compact in $C((0, \infty))^\mathbb{N}$. Thus, $\Phi(\mathcal{C}^\mathbb{N})$ is pre-compact.
	
	\medskip
	\emph{Step 5: $\Phi$ is continuous.}\quad Let $(\gamma^m)_{m \in \mathbb{N}} \subset \mathcal{C}^\mathbb{N}$ be a sequence converging to $\gamma \in \mathcal{C}^\mathbb{N}$, i.e.\ $\gamma^m_{x_n} \to \gamma_{x_n}$ in $\mathcal{C}$ as $m \to \infty$ for each $n \in \mathbb{N}$. Our goal is to prove $\Phi(\gamma^m) \to \Phi(\gamma)$ in $\mathcal{C}^\mathbb{N}$, or equivalently, $\Phi(\gamma^m)_{x_n} \to \Phi(\gamma)_{x_n}$ in $\mathcal{C}$ as $m \to \infty$ for each $n \in \mathbb{N}$. Fix $n \in \mathbb{N}$ and $0 < t_0 < T$. It suffices to show that $(\Phi(\gamma^m)_{x_n})_{m \in \mathbb{N}}$ converges uniformly to $\Phi(\gamma)_{x_n}$ on the interval $[t_0, T]$. Our proof will utilize Lemma~\ref{lem:calc2}.
	
	First we show $f_n(\cdot; \gamma^m) \to f_n(\cdot; \gamma)$ uniformly on $[t_0, T]$ as $m \to \infty$. Consider the following estimate
	\begin{equation*}
		\sup_{t \in [t_0, T]} |f_n(t; \gamma^m) - f_n(t; \gamma)|
		\leq \sum_{k=0}^\infty \sum_{\substack{a \in \mathcal{A}^k \\ S_k(a) = x_n}} \prod_{i=1}^k p_{a_i} \sup_{t \in [t_0, T]} |f_{n, k, a}(t; \gamma^m) - f_{n, k, a}(t; \gamma)|.
	\end{equation*}
	Note that from \eqref{eq:f_est} we have
	\begin{equation*}
		\sup_{t \in [t_0, T]} |f_{n, k, a}(t; \gamma^m) - f_{n, k, a}(t; \gamma)|
		\leq \frac{2C(t_0, T)^k}{k!},\quad
		\forall\, m \in \mathbb{N}.
	\end{equation*}
	Then by \eqref{eq:series} and the dominated convergence theorem, it suffices to show for all $k \in \mathbb{N}$ and $a \in \mathcal{A}^k$ with $S_k(a) = x_n$,
	\begin{equation}\label{eq:unif_conv}
		\lim_{m \to \infty} \sup_{t \in [t_0, T]} |f_{n, k, a}(t; \gamma^m) - f_{n, k, a}(t; \gamma)| = 0.
	\end{equation}
	Again, without loss of generality, let us assume $k \geq 1$ (the case $k=0$ is similar and simpler, so we omit it). From \eqref{eq:theta_xi_est}, we have
	\begin{equation}\label{eq:tri_ineq}
		\begin{split}
			&\sup_{t \in [t_0, T]} |f_{n, k, a}(t; \gamma^m) - f_{n, k, a}(t; \gamma)|\\
			&\quad\quad\leq \sup_{t \in [t_0, T]} \mathbb{E}\bigl[\eta_t \bigl|\theta^{n, \gamma^m}_t - \theta^{n, \gamma}_t\bigr| \xi^{k, a, \gamma^m}_t\bigr] + \sup_{t \in [t_0, T]} \mathbb{E}\bigl[\eta_t \theta^{n, \gamma}_t \bigl|\xi^{k, a, \gamma^m}_t - \xi^{k, a, \gamma}_t\bigr|\bigr]\\
			&\quad\quad\leq C(k, t_0, T) \mathbb{E}\biggl[\sup_{t \in [t_0, T]} \bigl|\theta^{n, \gamma^m}_t - \theta^{n, \gamma}_t\bigr| + \sup_{t \in [t_0, T]} \bigl|\xi^{k, a, \gamma^m}_t - \xi^{k, a, \gamma}_t\bigr|\biggr].
		\end{split}
	\end{equation}
	For the $\theta$ term we use the mean value theorem to get
	\begin{equation*}
		\sup_{t \in [t_0, T]} \bigl|\theta^{n, \gamma^m}_t - \theta^{n, \gamma}_t\bigr|
		\leq \exp\biggl(\frac{U^2}{L} (T - t_0)\biggr) \int_{t_0}^T \eta_u \lambda(u, x_n) \biggl|\frac{1}{\gamma^m_{x_n}(u)} - \frac{1}{\gamma_{x_n}(u)}\biggr| \,du,
	\end{equation*}
	and for the $\xi$ term we have
	\begin{equation*}
		\begin{split}
			&\sup_{t \in [t_0, T]} \bigl|\xi^{k, a, \gamma^m}_t - \xi^{k, a, \gamma}_t\bigr|\\
			&\quad\quad\leq U^{2k} \int_0^T \int_0^{s_k} \cdots \int_0^{s_2} \Biggl|\prod_{i=1}^k \frac{1}{\gamma^m_{S_{i-1}(a)}(s_i)} \cdot \exp\Biggl(-\sum_{i=1}^{k+1} \int_{s_{i-1}}^{s_i} \frac{\eta_u \lambda(u, S_{i-1}(a))}{\gamma^m_{S_{i-1}(a)}(u)} \,du\Biggr)\\ &\quad\quad\quad\quad\quad\quad\quad- \prod_{i=1}^k \frac{1}{\gamma_{S_{i-1}(a)}(s_i)} \cdot \exp\Biggl(-\sum_{i=1}^{k+1} \int_{s_{i-1}}^{s_i} \frac{\eta_u \lambda(u, S_{i-1}(a))}{\gamma_{S_{i-1}(a)}(u)} \,du\Biggr)\Biggr| \,ds_1 \cdots ds_{k-1} ds_k.
		\end{split}
	\end{equation*}
	A straightforward application of the dominated convergence theorem yields
	\begin{equation}\label{eq:unif_conv_exp}
		\lim_{m \to \infty} \mathbb{E}\biggl[\sup_{t \in [t_0, T]} \bigl|\theta^{n, \gamma^m}_t - \theta^{n, \gamma}_t\bigr| + \sup_{t \in [t_0, T]} \bigl|\xi^{k, a, \gamma^m}_t - \xi^{k, a, \gamma}_t\bigr|\biggr] = 0.
	\end{equation}
	Chaining \eqref{eq:tri_ineq} and \eqref{eq:unif_conv_exp} gives us \eqref{eq:unif_conv}. Thus, we obtain that $f_n(\cdot; \gamma^m) \to f_n(\cdot; \gamma)$ uniformly on $[t_0, T]$ as $m \to \infty$.
	
	Following the same argument, one can show that $g_n(\cdot; \gamma^m) \to g_n(\cdot; \gamma)$ uniformly on $[t_0, T]$ as $m \to \infty$. Therefore, together with \eqref{eq:f} and \eqref{eq:g}, we use Lemma~\ref{lem:calc2} to conclude that $\Phi(\gamma^m)_{x_n} \to \Phi(\gamma)_{x_n}$ uniformly on $[t_0, T]$ as $m \to \infty$.
\end{proof}

Analogous to Corollary~\ref{cor:mp}, we have the following corollary.

\begin{corollary}
	Let $X$ be the pure jump process in Theorem~\ref{thm:inv_mp_ex}. Then, there exists a doubly stochastic compound Poisson process $\widehat{X}$ with intensity $(\lambda(t, \widehat{X}_{t-}))_{t \geq 0}$ and jump size distribution $\nu$ such that for every $t \geq 0$, the law of $\widehat{X}_t$ agrees with the law of $X_t$.
\end{corollary}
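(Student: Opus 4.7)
The plan is to follow the same three-step template used for Corollary~\ref{cor:mp}, simply substituting Theorem~\ref{thm:inv_mp_ex} for Theorem~\ref{thm:inv_mp} as the input, with general jump distribution $\nu$ replacing $\delta_1$. The ingredients are: (1) read off the differential characteristics of $X$ from Theorem~\ref{thm:cox2charac}; (2) invoke the Markovian projection theorem \cite{MR4814246}, Theorem~3.2 to obtain a mimicking It\^o semimartingale $\widehat{X}$ whose characteristics depend only on time and $\widehat{X}_{t-}$; and (3) recognize $\widehat{X}$ as a doubly stochastic compound Poisson process by applying Theorem~\ref{thm:charac2cox}.

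First, fix a truncation function $h$ and apply Theorem~\ref{thm:cox2charac} to the Cox construction carried out in the proof of Theorem~\ref{thm:inv_mp_ex}. The relevant $\lambda^k$'s are predictable with respect to $\mathbb{G}$ after passing to their left-continuous versions (this changes them only on the countable set $\{\tau_i\}$, hence on a $\mathbb{P}\otimes dt$-null set). Theorem~\ref{thm:cox2charac} then gives differential characteristics
\begin{equation*}
\beta_t = \mathbb{E}[h(J_1)]\,\frac{\eta_t}{\mathbb{E}[\eta_t \mid X_{t-}]}\lambda(t, X_{t-}),\quad \alpha_t = 0,\quad \kappa_t(d\xi) = \frac{\eta_t}{\mathbb{E}[\eta_t \mid X_{t-}]}\lambda(t, X_{t-})\,\nu(d\xi).
\end{equation*}
Conditioning on $X_{t-}$ collapses the ratio $\eta_t/\mathbb{E}[\eta_t\mid X_{t-}]$ to $1$, so the natural Markovian candidates are
\begin{equation*}
b(t, x) = \mathbb{E}[h(J_1)]\,\lambda(t, x),\quad a(t, x) = 0,\quad k(t, x, d\xi) = \lambda(t, x)\,\nu(d\xi),
\end{equation*}
and these satisfy equation (3.3) of \cite{MR4814246}.

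Next, I would verify the hypotheses of \cite{MR4814246}, Theorem~3.2. By Assumption~\ref{asm:2}, $b$ and $a$ are bounded by $U\int |h|\,d\nu$ and $0$ respectively, and $k(t,x,\fdot)$ is a finite L\'evy kernel with $\int(1\wedge|\xi|)\,k(t,x,d\xi) \leq U\int |\xi|\,\nu(d\xi)<\infty$, so the growth and integrability assumptions (3.2) and (3.4) therein are trivially met. The cited theorem then produces an It\^o semimartingale $\widehat X$ with Markovian-type characteristics $b(t,\widehat X_{t-})$, $a(t,\widehat X_{t-})$, $k(t,\widehat X_{t-},d\xi)$ and satisfying $\widehat X_t \stackrel{d}{=} X_t$ for every $t\geq 0$; in particular $\widehat X_0 = 0$.

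Finally, since $\lambda$ is bounded between $L$ and $U$, Theorem~\ref{thm:charac2cox} applies directly to $\widehat X$ and identifies it as a doubly stochastic compound Poisson process with intensity process $\lambda(t,\widehat X_{t-})$ and jump size distribution $\nu$. I do not foresee a genuine obstacle here; the only subtlety worth checking is the predictability hypothesis of Theorem~\ref{thm:cox2charac} noted above, and the need for Theorem~\ref{thm:charac2cox} to accommodate a general discrete $\nu$ with $\int|\xi|\,d\nu<\infty$, which is precisely the setting in which that theorem was stated.
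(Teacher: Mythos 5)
Your proof is correct and follows essentially the same route as the paper's: read off the characteristics of $X$ from Theorem~\ref{thm:cox2charac}, take conditional expectations to satisfy (3.3) of \cite{MR4814246}, invoke Theorem~3.2 there to obtain the mimicking process, and identify it as a doubly stochastic compound Poisson process via Theorem~\ref{thm:charac2cox}. The extra remark on modifying $\lambda^k$ to a predictable version is a fair technical clarification that the paper leaves implicit, but it does not change the argument.
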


\begin{proof}
	The proof is almost the same as Corollary~\ref{cor:mp}. Again, we use \cite{MR4814246}, Theorem~3.2. Let $h(x) = x \bm{1}_{\{|x| \leq 1\}}$, $x \in \mathbb{R}$, be a truncation function. By Theorem~\ref{thm:cox2charac}, the differential characteristics of $X$ associated with $h$ are given by
	\begin{equation*}
		\beta_t = \int_{[-1, 1]} \xi \,\nu(d\xi) \cdot \frac{\eta_t}{\mathbb{E}[\eta_t \,|\, X_{t-}]} \lambda(t, X_{t-}),\quad
		\alpha_t = 0,\quad
		\kappa_t(d\xi) = \frac{\eta_t}{\mathbb{E}[\eta_t \,|\, X_{t-}]} \lambda(t, X_{t-}) \nu(d\xi).
	\end{equation*}
	Taking conditional expectations $\mathbb{E}[\cdot \,|\, X_{t-}]$, we have
	\begin{equation*}
		b(t, x) = \int_{[-1, 1]} \xi \,\nu(d\xi) \cdot \lambda(t, x),\quad
		a(t, x) = 0,\quad
		k(t, x, d\xi) = \lambda(t, x) \nu(d\xi).
	\end{equation*}
	It is easy to justify assumptions (3.2) and (3.4) in \cite{MR4814246}. Thus, Theorem~3.2 in \cite{MR4814246} and Theorem~\ref{thm:charac2cox} yield the desired results.
\end{proof}

We make a few comments on the uniqueness in law of the inversion $X$. In general, there is no uniqueness in law result, even if we impose similar assumptions as in Theorem~\ref{thm:unique}. The reason is that in the proof of Theorem~\ref{thm:inv_mp_ex}, we used the Schauder fixed point theorem to get the existence of $(\gamma_{x_n})_{n \in \mathbb{N}}$, while the uniqueness is not guaranteed. So far, to the best of our knowledge, it is not clear whether \eqref{eq:fp_J} admits a unique fixed point solution or not. Proving the uniqueness using other fixed point techniques, or constructing counterexamples where uniqueness breaks, is one of the possible directions for future work.

Another direction of interest for future work is to further extend the jump size distribution to continuous laws with finite first moment. Some discretization and approximation techniques may come into play.

Lastly, one may also consider relaxing the boundedness assumptions $L \leq \eta \leq U$ and/or $L \leq \lambda \leq U$. For instance, when $\eta$ is only assumed to be positive (and satisfies some integrability condition so that everything is still well-defined), a natural idea is to truncate $\eta$ by defining
\begin{equation*}
	\eta^n \coloneqq \frac{1}{n} \lor (\eta \land n),\quad
	n \in \mathbb{N}^*.
\end{equation*}
Then, for each $n$ one may apply Theorem~3.2 (resp.\ Theorem~4.3) to construct a counting (resp.\ pure jump) process $X^n$ whose intensity is $\frac{\eta^n_t}{\mathbb{E}[\eta^n_t \,|\, X^n_{t-}]} \lambda(t, X^n_{t-})$ (resp.\ jump measure has compensator $\frac{\eta^n_t}{\mathbb{E}[\eta^n_t \,|\, X^n_{t-}]} \lambda(t, X^n_{t-}) \nu(d\xi) dt$). However, even if one can show the tightness of the sequence $(\eta^n, X^n)_{n \in \mathbb{N}^*}$, the main difficulty lies in the step of passing to the limit, due to the notorious discontinuity (with respect to weak convergence) of the map from joint laws to conditional laws. Such an extension to the case of more general $\eta$ and/or $\lambda$ would also be interesting and worth exploring in future work.

\bibliography{bibliography}
\bibliographystyle{abbrv}

\end{document}